\documentclass[11pt]{article}
\usepackage[T1]{fontenc}
\usepackage[margin=1in]{geometry}
\usepackage{amsmath,amssymb,amsthm,mathtools}
\usepackage[authoryear]{natbib}
\usepackage{mathrsfs,bm}
\usepackage{booktabs,tabularx,array}
\usepackage{url}
\usepackage[colorlinks,citecolor=blue,urlcolor=blue,linkcolor=blue]{hyperref}
\numberwithin{equation}{section}
\theoremstyle{plain}
\newtheorem{theorem}{Theorem}[section]
\newtheorem{proposition}[theorem]{Proposition}
\newtheorem{lemma}[theorem]{Lemma}
\newtheorem{corollary}[theorem]{Corollary}

\theoremstyle{definition}
\newtheorem{definition}[theorem]{Definition}
\newtheorem{example}[theorem]{Example}
\newtheorem{remark}[theorem]{Remark}
\newtheorem{fact}[theorem]{Fact}
\newcommand{\N}{\mathbb N}
\newcommand{\Nzero}{\mathbb N_0}
\newcommand{\E}{\mathbb E}
\newcommand{\F}{\mathcal F}
\newcommand{\M}{\mathcal M}
\newcommand{\Iuniv}{\mathcal I}
\newcommand{\C}{\mathcal C}
\newcommand{\D}{\mathcal D}
\newcommand{\Rfam}{\mathfrak R}
\newcommand{\Hh}{\mathsf H}
\newcommand{\Hhsup}{\overline{\mathsf H}}
\newcommand{\FDP}{\operatorname{FDP}}
\newcommand{\calN}{\mathcal N}
\newcommand{\loss}{L}
\newcommand{\runloss}{\overline L}
\newcommand{\can}{\mathrm{can}}
\newcommand{\one}{\mathbf 1}

\newcommand{\ceBH}{\mathrm{ceBH}}

\newcommand{\proofidea}{\par\medskip\noindent\textit{Proof idea.}\ }
\newcommand{\justification}{\par\medskip\noindent\textit{Justification.}\ }
\title{Dynamic $e$-closure for online hypotheses with any-time-valid evidence: closure principles and projective mergers}
\author{Rianne de Heide\\Department of Applied Mathematics, University of Twente\\ and Centrum Wiskunde \& Informatica, Amsterdam\\\texttt{r.deheide@utwente.nl}}
\date{}
\begin{document}
\maketitle
\begin{abstract}
Many modern testing problems are sequential along two axes: new hypotheses may arrive over time, while evidence for hypotheses already under consideration continues to evolve and may be inspected at arbitrary stopping times. We develop dynamic $e$-closure for this setting. At a global stopping time the active true-null intersection is random. Future-extension coherence allows its certificate to be compared with that of a fixed terminal intersection, yielding simultaneous stopped-FDR control. If the certificates are also time-monotone, the resulting closure controls simultaneous SupFDR and is setwise persistent. Conversely, every procedure satisfying either criterion is contained in a dynamic closure generated by canonical normalized-loss processes. For pointwise mergers, fixed-dimensional admissibility is equivalent to ordinary arbitrary-dependence $e$-merging. Coherence across horizons then forces a single globally summable weight sequence and exact neutrality under padding by the $e$-value one; on a countably infinite hypothesis universe, this rules out nontrivial symmetric mergers in the admissible pointwise class. The theory extends from FDP to bounded losses that are monotone in the possible true-null configuration and local in the reported action. We also give a coherence counterexample, persistent constructions, and a globally valid shared-control model.
\end{abstract}
\tableofcontents
\newpage
\section{Introduction}\label{sec:intro}

Many testing problems are open-ended in two ways. In a platform trial, new treatment arms may be introduced while earlier comparisons continue to recruit patients. An experimentation platform may start new product experiments while previous ones are still running, and a scientific screening programme may keep adding hypotheses instead of fixing the complete family in advance. The hypothesis family therefore grows while the evidence for hypotheses already present continues to change.

Closed testing is a natural starting point because it separates local evidence for intersection hypotheses from the final multiple-testing procedure. Classical closed testing starts from valid intersection tests \citep{MarcusPeritzGabriel1976}; fixed-time $e$-closure starts from intersection $e$-values and thereby accommodates expectation-based losses such as the FDR \citep{XuEtAl2026Closure}. We develop the corresponding theory when hypotheses may arrive over time and evidence for existing hypotheses may continue to update, with validity preserved under optional stopping. Neither a final hypothesis list nor a final sample size needs to be fixed in advance.

The two temporal axes interact. Reapplying fixed-time closure after every evidence update does not justify a decision made at a global stopping time: the active true-null intersection at that time is random, whereas the $e$-process guarantee is stated for each fixed intersection. A second issue arises across hypothesis horizons. A merger that is valid at every fixed dimension can change when a new, still uninformative coordinate is added.

We resolve the stopping-time problem with future-extension coherence and a localization argument that compares the random active true-null certificate with one fixed terminal-intersection process. This yields simultaneous stopped FDR and, with time-monotone certificates, simultaneous SupFDR and persistence. We then study pointwise merger rules that are required to be compatible
under neutral padding for every finite inclusion of coordinate sets.
Combining this all-subset compatibility condition with the
fixed-dimensional characterization of admissible $e$-mergers of
\citet{Wang2025Merging} shows that all finite-dimensional weights must
be restrictions of one globally summable sequence.

\subsection{Motivating example: a continuously operating platform trial}\label{sec:platform-motivation}

Platform trials provide a simple example in which the two forms of sequentiality occur together. Under a common master protocol, several interventions may be compared with a shared control, while new treatment arms are introduced and recruitment to existing arms continues \citep{LeeEtAl2021Platform,RobertsonWasonRamdas2023}. For each arm $i$, let $H_i$ denote the null hypothesis that the treatment has no beneficial effect relative to control. When a new arm opens, a new hypothesis enters the family. At the same time, outcomes continue to arrive for arms that were already open, so the evidence for these hypotheses keeps evolving after later hypotheses have been introduced.

Shared control also makes the arm-wise statistics dependent, and a monitoring committee may base stopping or reporting decisions on the complete trial history. An $e$-process that is valid only in an arm-specific filtration need not remain valid for such a global stopping rule; validity is needed in the filtration used to stop \citep{WangDandapanthulaRamdas2025,ChoeRamdas2026}. Section~\ref{sec:adjuster} describes the filtration-lifting result of \citet{ChoeRamdas2026}. Section~\ref{sec:shared-control} instead verifies global validity directly in a simple Gaussian shared-control model. That model leaves aside random arm additions, delayed outcomes, time trends, nonconcurrent controls, and regulatory decision rules.

\subsection{Terminology: online, any-time, simultaneous, persistent, and coherent}\label{sec:terminology}

The word \emph{online} is used differently in neighboring literatures. In online learning and bandit theory it often refers to observations arriving round by round, whereas in multiple testing it usually refers to hypotheses arriving in a stream \citep{FosterStine2008,FischerBofillBrannath2024}. We use the latter convention. Thus \emph{online} refers to the arrival of new hypotheses, while \emph{any-time} refers to continued sampling or evidence updates for hypotheses already present. We use \emph{any-time valid} only for methods, guarantees, or evidence objects whose validity is preserved under optional stopping \citep{RamdasWangBook}.

The word \emph{simultaneous} means that several candidate rejection sets may be certified at the same time and that the guarantee holds for every set in that family, as in post-hoc closed testing and $e$-closure \citep{GoemanSolari2011,XuEtAl2026Closure}. The maximum over the candidate family below expresses this guarantee. We use \emph{persistent} for the separate property that a rejection set certified earlier remains certified after further evidence updates.

The word \emph{coherence} has several statistical uses. In classical simultaneous testing it refers to compatibility with logical implication between hypotheses \citep{Gabriel1969}. Online closure uses the temporal notion of \emph{predictability}: adding only future hypotheses to an intersection should not undo a rejection already made \citep{FischerBofillBrannath2024}. Definition~\ref{def:future-coherence} imposes the corresponding one-sided inequality on numerical intersection certificates.

A single global update index records all observed events. The active-set process records which hypotheses are currently available, whereas the filtration records the full evidence history. A stopping time defined adapted to that filtration is called a \emph{global stopping time}. This term emphasizes that the decision to stop may use information from every active stream, including shared controls and other common data.

\subsection{Three meanings of closure}\label{sec:closure-meanings}

The word \emph{closure} is used in three related senses. Classical closed testing augments the elementary hypotheses by their intersections and derives global decisions from local intersection tests \citep{MarcusPeritzGabriel1976}. Following \citet{XuEtAl2026Closure}, the forward $e$-closure map instead takes intersection evidence as input and returns a family of certified decisions. This map is not an order-theoretic closure operator because its input and output are different objects. An order-theoretic \emph{hull} appears in Section~\ref{sec:closure-operator}: it starts from a candidate-family procedure and adds every decision whose configuration-wise loss is no larger than the loss already authorized by that procedure. Appendix~\ref{sup:closure-proof} proves the closure-operator properties.

The forward and converse results give a necessary-and-sufficient representation \emph{up to containment}, as in fixed-time $e$-closure \citep{XuEtAl2026Closure}. Every dynamic closure generated from valid local evidence satisfies the stated error criterion, and every procedure satisfying that criterion is contained in a closure generated by canonical local evidence. The representing closure may be larger than the original procedure. This representational statement is separate from Proposition~\ref{prop:coherence-necessary}, which concerns the assumptions needed for a universal forward theorem.

\subsection{What must be formulated for the joint setting}\label{sec:ingredients}

The construction uses three existing ingredients: fixed-time $e$-closure \citep{XuEtAl2026Closure}, the predictability condition from online closure \citep{FischerBofillBrannath2024}, and $e$-process validity in the filtration used for stopping \citep{WangDandapanthulaRamdas2025}. The task is to combine these ingredients in a way that is compatible with
both an expanding hypothesis family and continuing evidence.

For this we introduce the \emph{dynamic intersection $e$-process collection},  indexed by both the intersection hypothesis and global evidence time, together with a condition that we call \emph{future-extension coherence}. This condition says that adding hypotheses that are not yet active cannot decrease the current intersection certificate. This gives \emph{horizon invariance}: computing the closure
from the currently active family gives the same result as first extending to any finite future horizon
and then restricting back to the current hypotheses. Stopping requires one further argument,
because the active true-null intersection at a stopping time is random. The \emph{random intersection localization lemma} controls this random object through one fixed terminal intersection, to which
the usual e-process validity applies. A maximal version of the same argument yields SupFDR
control.

To construct intersection processes from coordinate processes, we study \emph{pointwise $e$-process mergers}, which use only the current coordinate values. When such
merger rules are specified for every finite intersection, they must also determine what happens when
a new coordinate enters with the neutral value one. We call exact invariance under this operation \emph{neutral-padding projectivity}. Section~\ref{sec:rigidity} imposes the stronger requirement that
neutral-padding coherence hold for every finite inclusion $B\subseteq S$.
Within the admissible arbitrary-dependence pointwise class, this
all-subset condition forces one globally summable weight sequence.

\subsection{Main contributions}\label{sec:contributions}

The main results are the following.
\begin{enumerate}
\item \textbf{Dynamic $e$-closure principles.} We prove horizon invariance and random-intersection localization, and then derive forward dynamic $e$-closure theorems for simultaneous stopped FDR and, after adding time-monotone certificates, simultaneous SupFDR and setwise persistence. Conversely, every procedure satisfying either guarantee is contained in a closure generated by canonical normalized-loss $e$-processes. Hence dynamic $e$-closure is necessary and sufficient up to containment, in the same sense as the fixed-time representation of \citet{XuEtAl2026Closure}. A separate two-hypothesis construction shows that coherence cannot be removed from a distribution-free forward theorem, even when every fixed-intersection process is a test martingale. We also show that, once the candidate family itself is persistent, fixed-time simultaneous FDR, stopped FDR, and SupFDR coincide. As in the fixed-time theory of \citet{XuEtAl2026Closure}, the arguments extend beyond FDP to suitable bounded expected losses.
\item \textbf{Mergers across horizons.} We show that a Borel map merges arbitrary $e$-processes pointwise in a common global filtration if and only if it is an ordinary arbitrary-dependence $e$-merging function. Combining this equivalence with \citeauthor{Wang2025Merging}'s fixed-dimensional characterization needs the \emph{cross-horizon rigidity theorem}: coherence forces one globally summable weight sequence and, within the admissible pointwise class, implies exact \emph{neutral-padding projectivity}. On a countably infinite universe, no nontrivial symmetric merger remains in this class.

\item \textbf{Constructions.} We give a globally weighted projective closure, persistent constructions based on adjusted running maxima, and a globally valid shared-control model. We also characterize pathwise nondecreasing $e$-processes by a terminal expectation condition and give closed eBH and closed BY as batch any-time benchmarks.

\item \textbf{Canonical minimality and hulls.} The canonical current-loss and running-loss processes are the smallest coherent certificates covering a given procedure. Applying the forward closure map to these certificates defines current and running hulls. A fixed point of that hull is called \emph{loss-saturated}: it already contains every rejection set that can be added without increasing its own current or running worst-case loss profile.

\end{enumerate}

Throughout, the active sets are deterministic and finite, and sequential validity is defined relative to one global filtration. The general closure constraints can be exponential in the number of active hypotheses. Random predictable arrivals, history-dependent mergers, persistent calibration, and scalable computation are left for future work.

\subsection{Related work}\label{sec:related}

Classical closure organizes multiplicity adjustment through intersection hypotheses \citep{MarcusPeritzGabriel1976}; later work used this structure for simultaneous true-discovery statements, post-hoc inference, consonance, and admissibility \citep{GoemanSolari2011,GoemanHemerikSolari2021}. Online multiple testing has roots in alpha-investing \citep{FosterStine2008} and later online FDR procedures \citep{JavanmardMontanari2018}. Online closure \citep{FischerBofillBrannath2024} extends closed testing to a growing hypothesis stream through a predictability condition that prevents future hypotheses from invalidating current decisions. \citet{FischerRamdas2024} study online true-discovery guarantees using any-time-valid intersection tests. \citet{FischerXuRamdas2024OnlineBH} develop horizon-aware online versions of BH and eBH; ordinary fixed-dimensional eBH is recovered when the number of hypotheses is known to be fixed.

The eBH procedure of \citet{WangRamdas2022} controls FDR under arbitrary dependence using $e$-values. Fixed-time $e$-closure \citep{XuEtAl2026Closure} replaces binary intersection tests by intersection $e$-values and allows a family of rejection sets to be certified simultaneously. It also gives the fixed-time necessary-and-sufficient representation up to containment used as the starting point for our converse results. \citet{SunWang2026Admissibility} give a fixed-dimensional complete-class result for weighted-mean closed-eBH procedures based on arbitrary $e$-values. 

\citet{XuFischerRamdas2026Online} develop an online SupFDR $e$-closure
principle for the standard online setting in which hypothesis $H_i$
arrives together with a single $e$-value $E_i$. Their intersection
evidence consists of one $e$-value $E^S$ for each finite $S$, and is
required to be increasing when $S$ is extended only by later-arriving
indices. This yields nested closure families and SupFDR control, and
they use suitable increasing $e$-collections to improve e-LOND and
related procedures. They also develop a separate donation framework
based on online compound $e$-values and give efficient algorithms for
the resulting procedures.

Our setting adds a second evidence clock. After a hypothesis has arrived, its evidence may continue to change, so each intersection is
equipped with an entire process $(E_t^S)_{t\geq0}$ rather than one
$e$-value $E^S$. At a global stopping time $\tau$, both the evaluation
time and the active true-null index $\calN_\tau(P)$ are therefore
random. Fixed-intersection $e$-process validity controls evaluation at
the random time; Lemma~\ref{lem:random-intersection} uses
future-extension coherence to remove the random intersection index.

The online closure theorem of \citet{XuFischerRamdas2026Online} is
recovered as a one-axis persistent special case of our construction.
Take $I_t=[t]$ and let $(E^S)$ be their increasing $e$-collection.
Defining
\[
\widehat E_t^S:=E^{S\cap[t]}
\]
(with the empty-set bookkeeping convention) gives a time-monotone
intersection $e$-process collection. Its time monotonicity follows
from increasingness of $(E^S)$, and for $S\subseteq[t]$ one has
$\widehat E_t^S=E^S$, so the resulting closure at time $t$ is their
online $e$-closure. The additional work here is needed when evidence
for an already active intersection continues to evolve between
hypothesis arrivals.

The second temporal axis comes from sequential inference. Test martingales go back to Ville \citep{Ville1939} and are developed as measures of evidence by \citet{ShaferEtAl2011TestMartingales}; we use the $e$-process terminology summarized in \citet{RamdasWangBook}. In multiple testing, \citet{WangDandapanthulaRamdas2025} study validity at global stopping times, \citet{ChoeRamdas2026} show how adjusters can lift $e$-processes to a finer filtration, and \citet{TavyrikovGoemanDeHeide2026} use running-maximum adjustment for persistent testing. Definition~\ref{def:supfdr} distinguishes the SupFDR criterion used here from their FDR-sup criterion.

For $e$-value merging, \citet{VovkWang2021} study symmetric mergers, \citet{Wang2025Merging} characterize all admissible arbitrary-dependence mergers at a fixed dimension, and \citet{Clerico2026Merging} give a shorter geometric proof of the latter result. \citet{VovkWang2024Sequential} study sequential merging of conditionally valid one-step $e$-values. Our inputs are instead several already-formed $e$-processes evolving in parallel. Theorem~\ref{thm:process-merger-equivalence} reduces pointwise merging to Wang's fixed-dimensional problem, and Theorem~\ref{thm:coherent-merger-rigidity} adds compatibility across horizons.

\section{Setup and preliminaries}\label{sec:setup}

\subsection{A global filtration with two temporal axes}

Let $(\Omega,\F,(\F_t)_{t\in\Nzero})$ be a discrete-time filtered measurable space, and let $\M$ be a family of probability measures on $(\Omega,\F)$. The global update index $t$ may record the arrival of a new hypothesis or new observations for hypotheses that are already active.

Let $\Iuniv\subseteq\N$ be an at most countable index set, which may be finite, and let $H_i\subseteq\M$, $i\in\Iuniv$, be null hypotheses. For $P\in\M$, define the true-null index set
\begin{equation}\label{eq:true-nulls}
  \calN(P):=\{i\in\Iuniv:P\in H_i\}.
\end{equation}
For every finite $S\subseteq\Iuniv$, let
\begin{equation}\label{eq:intersection-null}
  H_S:=\bigcap_{i\in S}H_i,
  \qquad H_\varnothing:=\M.
\end{equation}

At time $t$, a deterministic finite set $I_t\subseteq\Iuniv$ records the hypotheses that have arrived. We assume
\begin{equation}\label{eq:active-sets}
  I_0\subseteq I_1\subseteq\cdots,
  \qquad |I_t|<\infty,
  \qquad \bigcup_{t\geq0}I_t=\Iuniv.
\end{equation}
The active true nulls under $P$ are
\begin{equation}\label{eq:active-true}
  \calN_t(P):=\calN(P)\cap I_t.
\end{equation}
If $\Iuniv=\N$, $I_t=[t]$, and each hypothesis contributes one final statistic, we recover the standard online multiple-testing axis. If $\Iuniv=[m]$ and $I_t=[m]$ for all $t$, the family is batch but its evidence may still be updated and inspected at any time. In the general case the two types of updates are interleaved.

\subsection{Candidate rejection families and loss profiles}

Following the formulation of \citet{BenjaminiHochberg1995}, for a finite rejection set $R$ and a possible true-null configuration $S$, define the \emph{False Discovery Proportion}
\begin{equation}\label{eq:fdp}
  \FDP_S(R):=\frac{|S\cap R|}{|R|\vee1}.
\end{equation}
We repeatedly use two elementary properties. If $S\subseteq T$, then
\begin{equation}\label{eq:fdp-monotone}
  \FDP_S(R)\leq\FDP_T(R),
\end{equation}
because the numerator can only increase while the denominator is unchanged. Moreover, if $R\subseteq I_t$, then
\begin{equation}\label{eq:fdp-locality}
  \FDP_S(R)=\FDP_{S\cap I_t}(R).
\end{equation}

At time $t$, a procedure may certify a nonempty random family
\[
  \C_t\subseteq2^{I_t}.
\]
We assume $\{R\in\C_t\}\in\F_t$ for every deterministic $R\subseteq I_t$. Since $I_t$ is finite, all maxima below are measurable. A standard procedure reporting one set $R_t$ is the special case $\C_t=\{R_t\}$.

The family formulation follows the post-hoc simultaneous-inference viewpoint of closed testing and fixed-time $e$-closure \citep{GoemanSolari2011,GoemanHemerikSolari2021,XuEtAl2026Closure}: after observing the data, any set in $\C_t$ may be selected while retaining the stated guarantee. For any $S\subseteq\Iuniv$, define the current loss profile
\begin{equation}\label{eq:current-loss}
  \loss_t^S(\C):=\max_{R\in\C_t}\FDP_{S\cap I_t}(R)
\end{equation}
and the running loss profile
\begin{equation}\label{eq:running-loss}
  \runloss_t^S(\C):=\max_{0\leq u\leq t}\loss_u^S(\C).
\end{equation}
Both are nondecreasing in $S$, whereas only $\runloss_t^S$ is necessarily nondecreasing in $t$.

\subsection{Stopped FDR and SupFDR}

All stopping-time statements below are understood distribution by distribution: for fixed $P$, an ``almost surely finite'' stopping time $\tau$ satisfies $P(\tau<\infty)=1$.

\begin{definition}[Simultaneous stopped FDR]\label{def:stopped-fdr}
The candidate-family process $\C=(\C_t)_{t\geq0}$ controls simultaneous stopped FDR at level $\alpha\in(0,1]$ if, for every $P\in\M$ and every almost surely finite $(\F_t)$-stopping time $\tau$,
\begin{equation}\label{eq:stopped-fdr}
  \E_P\!\left[\loss_\tau^{\calN(P)}(\C)\right]\leq\alpha.
\end{equation}
\end{definition}

For singleton candidate families, this is the stopped-FDR criterion of \citet{WangDandapanthulaRamdas2025}. For a genuine candidate family, ``simultaneous'' means that more than one rejection set may be certified and the guarantee applies to every certified set; the maximum in Equation~\eqref{eq:stopped-fdr} records the worst FDP among them.

\begin{definition}[Simultaneous SupFDR]\label{def:supfdr}
The candidate-family process $\C$ controls simultaneous SupFDR at level $\alpha$ if, for every $P\in\M$,
\begin{equation}\label{eq:supfdr}
  \E_P\!\left[\sup_{t\geq0}\loss_t^{\calN(P)}(\C)\right]\leq\alpha.
\end{equation}
\end{definition}

The name SupFDR follows \citet{XuRamdas2022Dynamic}, who use $\E[\sup_t\FDP(R_t)]$ for a single online rejection sequence. Definition~\ref{def:supfdr} additionally maximizes over the candidate family at each time. This differs from the FDR-sup criterion of \citet{TavyrikovGoemanDeHeide2026}, which applies a multiple-testing procedure to coordinatewise running suprema of the $e$-processes and controls the FDR of the resulting persistent rejection set. SupFDR instead takes the supremum of the FDP path before expectation. Hence SupFDR implies stopped FDR; Example~\ref{ex:stopped-not-sup} shows that the converse fails.

\subsection{Intersection \texorpdfstring{$e$}{e}-process collections}

A nonnegative random variable $X$ is an $e$-variable for a null hypothesis $H$ if $\E_P[X]\leq1$ for every $P\in H$. An $e$-process requires the same expectation bound after every admissible stopping time.

\begin{definition}[$e$-process and test supermartingale]\label{def:eprocess}
Following \citet{RamdasWangBook}, a nonnegative adapted process $E=(E_t)_{t\geq0}$ is an $e$-process for a null hypothesis $H\subseteq\M$, relative to $(\F_t)$, if
\begin{equation}\label{eq:eprocess}
  \E_P[E_\tau]\leq1
\end{equation}
for every $P\in H$ and every almost surely finite $(\F_t)$-stopping time $\tau$. We allow $E_0\leq1$ rather than requiring $E_0=1$, because a nontrivial pathwise nondecreasing $e$-process cannot start at one; see Proposition~\ref{prop:monotone-eprocess-characterization}. A \emph{test supermartingale} for $H$ is a nonnegative adapted process that is a supermartingale under every $P\in H$ and has initial value at most one. The use of nonnegative martingales as sequential tests goes back to Ville \citep{Ville1939}; the terminology and evidence interpretation of test martingales are developed explicitly by \citet{ShaferEtAl2011TestMartingales}.
\end{definition}

Every test supermartingale is an $e$-process by optional stopping, but the converse need not hold. When the distinction matters, we use the descriptive phrase \emph{non-supermartingale $e$-process} for an $e$-process that is not itself a test supermartingale. This does not assert that the process lacks a supermartingale majorant; such domination results are part of the general theory of $e$-processes \citep{RamdasRufLarssonKoolen2020}.

\begin{definition}[Dynamic intersection $e$-process collection]\label{def:intersection-system}
A \emph{dynamic intersection $e$-process collection} is a family
\[
  \mathbf E=(E_t^S)_{t\geq0,\;S\subseteq\Iuniv,\;0<|S|<\infty}
\]
such that $E^S=(E_t^S)_{t\geq0}$ is an $e$-process for the intersection null $H_S$ for every nonempty finite $S$.
\end{definition}

Definition~\ref{def:intersection-system} imposes validity separately for each fixed intersection. The next condition relates the certificates at different hypothesis horizons.

\begin{definition}[Future-extension coherence]\label{def:future-coherence}
A dynamic intersection $e$-process collection is \emph{future-extension coherent} if, pointwise on $\Omega$, for every time $t$ and every finite $S$ with $S\cap I_t\neq\varnothing$,
\begin{equation}\label{eq:future-coherence}
  E_t^{S\cap I_t}\leq E_t^S.
\end{equation}
\end{definition}

Equation~\eqref{eq:future-coherence} says that adding hypotheses that
have not yet arrived cannot decrease the certificate available now.
It is the numerical analogue of the no-reversal condition in online
closure \citep{FischerBofillBrannath2024}; here it is imposed at every
evidence update because evidence for active hypotheses continues to
evolve. When $I_t=[t]$ and evidence is frozen after entry, this reduces
to the ordered increasingness condition used in online $e$-closure
\citep{XuFischerRamdas2026Online}: an intersection is compared only
with extensions by later-arriving hypotheses.

\begin{definition}[Neutral-padding projective consistency]\label{def:projective-consistency}
A dynamic intersection $e$-process collection is \emph{neutral-padding projectively consistent}, or simply \emph{projectively consistent}, if it can be extended to the empty index set by an adapted bookkeeping process $E^\varnothing$ such that, pointwise on $\Omega$, for every finite $S$ and every time $t$,
\begin{equation}\label{eq:projective}
  E_t^S=E_t^{S\cap I_t}.
\end{equation}
The empty-set extension is never used as evidence for a nonempty intersection null.
\end{definition}

Projective consistency is the equality version of coherence: inactive indices have no effect on the current certificate. It is stronger than the one-sided conditions used in online closure. The term \emph{projective} is explained in Section~\ref{sec:weighted-construction}, where the consistency maps are explicit neutral-padding embeddings.

The next condition concerns evolution of the certificate over evidence time.

\begin{definition}[Time monotonicity]\label{def:time-monotone}
A dynamic intersection $e$-process collection is \emph{time-monotone} if, pointwise on $\Omega$,
\begin{equation}\label{eq:time-monotone}
  E_t^S\leq E_u^S\qquad(t\leq u)
\end{equation}
for every finite nonempty $S$.
\end{definition}

Time monotonicity is an additional persistence condition. A certificate may increase as evidence evolves but cannot later decrease. This requires the subnormalized convention above: if $E_0=1$ and $E_t$ is pathwise nondecreasing, then $E_t\geq1$ almost surely, while deterministic-time $e$-validity gives $\E[E_t]\leq1$; hence $E_t=1$ almost surely. Likewise, a pathwise nondecreasing test supermartingale is constant; see Appendix~\ref{sup:betting}. Adjusted running maxima provide one construction of nontrivial time-monotone $e$-processes, but not every such process has that form; Appendix~\ref{sup:nonadjuster} gives a counterexample.

\begin{definition}[Setwise persistence]\label{def:setwise-persistence}
A candidate-family process is \emph{setwise persistent} if, pointwise on $\Omega$ and after identifying every $R\subseteq I_t$ with the same subset of $I_u$ for $u\geq t$,
\[
  \C_t\subseteq\C_u\qquad(t\leq u).
\]
\end{definition}

Setwise persistence is the candidate-family analogue of the nonretraction requirement in carefree multiple testing \citep{TavyrikovGoemanDeHeide2026}: every previously certified set remains certified. It does not imply that independently selected rejection sets are nested. Nested reporting requires a separate selection rule that restricts later choices to supersets of the previously reported set.

\begin{proposition}[Characterization of time-monotone $e$-processes]\label{prop:monotone-eprocess-characterization}
Let $V=(V_t)_{t\geq0}$ be a nonnegative, adapted, pathwise nondecreasing process, and let $H$ be a null hypothesis. The following are equivalent:
\begin{enumerate}
\item[(i)] $V$ is an $H$-$e$-process;
\item[(ii)] $\E_P[V_t]\leq1$ for every deterministic $t$ and every $P\in H$;
\item[(iii)] with $V_\infty:=\sup_{t\geq0}V_t$, one has $\E_P[V_\infty]\leq1$ for every $P\in H$.
\end{enumerate}
Thus, for a nonnegative adapted process that can only increase with time, full stopping-time validity is equivalent to the single requirement that its limiting value $V_\infty=\sup_t V_t$ has expectation at most one under every null distribution.
\end{proposition}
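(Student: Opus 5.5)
I would prove the cycle of implications (i)$\Rightarrow$(ii)$\Rightarrow$(iii)$\Rightarrow$(i), using only pathwise monotonicity and the monotone convergence theorem.

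The step (i)$\Rightarrow$(ii) is immediate, because every deterministic time $t$ is an almost surely finite $(\F_t)$-stopping time. Definition~\ref{def:eprocess} then gives $\E_P[V_t]\leq1$ for every $P\in H$.

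For (ii)$\Rightarrow$(iii), fix $P\in H$. Since $V$ is nonnegative and pathwise nondecreasing, $V_t\uparrow V_\infty=\lim_{t\to\infty}V_t$ pointwise on $\Omega$, where the limit may take the value $+\infty$. Each $V_t$ is $\F$-measurable, so $V_\infty$ is a measurable $[0,\infty]$-valued random variable. Monotone convergence then gives $\E_P[V_\infty]=\lim_t\E_P[V_t]\leq1$. In particular $V_\infty<\infty$ $P$-almost surely, although the argument does not need this.

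For (iii)$\Rightarrow$(i), fix $P\in H$ and an almost surely finite stopping time $\tau$. On $\{\tau<\infty\}$, monotonicity gives $V_\tau\leq V_\infty$ pointwise. The complement $\{\tau=\infty\}$ is $P$-null, so whatever convention defines $V_\tau$ there does not affect the expectation. Hence $\E_P[V_\tau]\leq\E_P[V_\infty]\leq1$. Adaptedness of $V$ ensures that $V_\tau\one\{\tau<\infty\}$ is measurable, so the expectation is well defined. This shows that $V$ is an $H$-$e$-process.

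No step is a real obstacle. The only care needed is measurability and the value of $V_\infty$ where it is infinite, and monotone convergence handles both. I would also remark that the argument never uses adaptedness beyond the measurability of $V_\tau$, and never uses supermartingale structure. This is why nondecreasing $e$-processes that are not test supermartingales exist, consistent with the discussion after Definition~\ref{def:time-monotone}.
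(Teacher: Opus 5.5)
Your proposal is correct and follows the paper's proof exactly: deterministic times give (i)$\Rightarrow$(ii), monotone convergence along $V_t\uparrow V_\infty$ gives (ii)$\Rightarrow$(iii), and the pathwise bound $V_\tau\leq V_\infty$ gives (iii)$\Rightarrow$(i). Your extra remarks on measurability and the null event $\{\tau=\infty\}$ are harmless additions. Your closing remark is tangential, though: the reason nonconstant nondecreasing $e$-processes cannot be test supermartingales is the separate fact that a nondecreasing supermartingale is constant, not anything in this argument.
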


\begin{proof}
The implication (i)$\Rightarrow$(ii) follows by taking a deterministic stopping time. If (ii) holds, then $V_t\uparrow V_\infty$, so monotone convergence gives
\[
  \E_P[V_\infty]=\lim_{t\to\infty}\E_P[V_t]\leq1.
\]
Finally, (iii) implies (i) because $V_\tau\leq V_\infty$ pathwise for every almost surely finite stopping time $\tau$.
\end{proof}

For a pathwise nondecreasing certificate, deterministic-time expectation bounds already imply validity at every stopping time. Equivalently, it is enough to check $\E_P[V_\infty]\leq1$. The construction problem is therefore to make $V_t$ respond strongly to the data while keeping the terminal expectation within this budget of $1$. Adjusted running maxima are one general construction.

\section{Dynamic \texorpdfstring{$e$}{e}-closure principles}\label{sec:dynamic-principles}

\subsection{Horizon invariance and the random-intersection problem}\label{sec:random-intersection}

Future-extension coherence has two consequences needed below. It makes current decisions invariant to the choice of a larger finite future horizon, and it allows the certificate for the random active true-null intersection at a stopping time to be compared with a fixed-intersection $e$-process.

Following the fixed-time loss-constrained $e$-closure of \citet{XuEtAl2026Closure}, given a dynamic intersection collection $\mathbf E$, define the current closure family
\begin{equation}\label{eq:dynamic-closure}
  \Rfam_{\alpha,t}(\mathbf E)
  :=\left\{R\subseteq I_t:
  \FDP_S(R)\leq\alpha E_t^S
  \text{ for every nonempty }S\subseteq I_t\right\}.
\end{equation}
For a finite $J\subseteq\Iuniv$ with $I_t\subseteq J$, also define the restriction of the $J$-horizon closure to current decisions by
\begin{equation}\label{eq:horizon-closure}
  \Rfam_{\alpha,t}^{J}(\mathbf E)
  :=\left\{R\subseteq I_t:
  \FDP_{S\cap I_t}(R)\leq\alpha E_t^S
  \text{ for every }S\subseteq J\text{ with }S\cap I_t\neq\varnothing\right\}.
\end{equation}
Write $\Rfam_\alpha(\mathbf E):=(\Rfam_{\alpha,t}(\mathbf E))_{t\geq0}$ for the resulting candidate-family process.

\begin{proposition}[Horizon invariance]\label{prop:horizon-invariance}
If $\mathbf E$ is future-extension coherent, then for every $t$ and every finite $J\subseteq\Iuniv$ with $I_t\subseteq J$,
\begin{equation}\label{eq:horizon-invariance}
  \Rfam_{\alpha,t}^{J}(\mathbf E)=\Rfam_{\alpha,t}(\mathbf E).
\end{equation}
If the collection is projectively consistent, every future-horizon constraint is exactly the corresponding current constraint after inactive indices are removed.
\end{proposition}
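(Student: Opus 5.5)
The proof is a double inclusion of the two families of sets $R\subseteq I_t$, and it uses only the pointwise inequality \eqref{eq:future-coherence}. No probabilistic property of the $e$-processes is needed: both families are defined pathwise, so the argument works for each fixed $\omega$.

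\emph{Inclusion $\Rfam^{J}_{\alpha,t}\subseteq\Rfam_{\alpha,t}$.} Take $R\in\Rfam^{J}_{\alpha,t}(\mathbf E)$ and a nonempty $S\subseteq I_t$. Since $I_t\subseteq J$, the set $S$ is itself one of the indices allowed in \eqref{eq:horizon-closure}, and $S\cap I_t=S\neq\varnothing$. The $J$-horizon constraint for this $S$ therefore reads $\FDP_S(R)\le\alpha E_t^S$. This is exactly the constraint required in \eqref{eq:dynamic-closure}, so this direction is immediate.

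\emph{Inclusion $\Rfam_{\alpha,t}\subseteq\Rfam^{J}_{\alpha,t}$.} Take $R\in\Rfam_{\alpha,t}(\mathbf E)$ and $S\subseteq J$ with $S\cap I_t\neq\varnothing$. Put $S':=S\cap I_t$, which is a nonempty subset of $I_t$. The current-closure constraint gives $\FDP_{S'}(R)\le\alpha E_t^{S'}$. Future-extension coherence \eqref{eq:future-coherence} applies because $S$ is finite and meets $I_t$, and it gives $E_t^{S'}\le E_t^S$. Chaining the two inequalities yields $\FDP_{S\cap I_t}(R)\le\alpha E_t^S$, which is the $J$-horizon constraint. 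The only point needing care is to record that $S$ is finite as a subset of the finite set $J$, so that $E^S$ is defined and coherence applies.

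\emph{Projective-consistency addendum.} Under \eqref{eq:projective} we have $E_t^S=E_t^{S\cap I_t}$. Hence each future-horizon constraint $\FDP_{S\cap I_t}(R)\le\alpha E_t^S$ is literally identical to the current constraint indexed by $S\cap I_t$. This is because $S\cap I_t\neq\varnothing$, so the bookkeeping process $E^\varnothing$ is never used. There is no real obstacle in this proof. The one conceptual point worth stating is that coherence is needed only for the second inclusion, and only in its one-sided form, because the $\FDP$ on the left of \eqref{eq:horizon-closure} already depends on $S$ only through $S\cap I_t$.
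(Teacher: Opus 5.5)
Your proposal is correct and follows the paper's own proof essentially step for step. The inclusion $\Rfam^{J}_{\alpha,t}(\mathbf E)\subseteq\Rfam_{\alpha,t}(\mathbf E)$ comes from restricting to $S\subseteq I_t$; the reverse inclusion chains current feasibility for $S\cap I_t$ with the coherence inequality $E_t^{S\cap I_t}\le E_t^S$; and the projective addendum follows from $E_t^S=E_t^{S\cap I_t}$.
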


\begin{proof}
The inclusion from left to right follows by taking $S\subseteq I_t$ in Equation~\eqref{eq:horizon-closure}. Conversely, let $R\in\Rfam_{\alpha,t}(\mathbf E)$ and fix $S\subseteq J$ with $B:=S\cap I_t\neq\varnothing$. Current feasibility and future-extension coherence give
\[
  \FDP_{S\cap I_t}(R)=\FDP_B(R)
  \leq\alpha E_t^B\leq\alpha E_t^S.
\]
Hence $R\in\Rfam_{\alpha,t}^{J}(\mathbf E)$. Under projective consistency $E_t^S=E_t^B$, so the two constraints coincide.
\end{proof}

Thus the closure computed from the active family is exactly the restriction of the closure obtained at any larger finite horizon. Without coherence, adding a dormant future hypothesis can tighten a constraint on a decision that is already available.

For a single padded constraint, with $B=S\cap I_t$, the current and future right-hand sides are $\alpha E_t^B$ and $\alpha E_t^S$. Requiring the future constraint never to be tighter is exactly the inequality $E_t^B\leq E_t^S$. A particular violation can be irrelevant if the corresponding constraint never binds, but fixed-intersection validity alone cannot support a general forward theorem, as the next proposition shows.

\begin{proposition}[Necessity of coherence for the forward theorem]\label{prop:coherence-necessary}
There exist two true null hypotheses, deterministic active sets, and a dynamic intersection collection for which every fixed-intersection process is a nonnegative test martingale, but the induced dynamic $e$-closure at level $\alpha=1/2$ has stopped FDR equal to one. Hence fixed-intersection $e$-process validity alone does not imply the forward dynamic $e$-closure theorem.
\end{proposition}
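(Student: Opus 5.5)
The plan is to build an explicit counterexample on the smallest possible model. Take $\M=\{P\}$ to be a single probability measure and set $H_1=H_2=\M$, so that both hypotheses are true nulls and $\calN(P)=\{1,2\}$. Let $\Omega=\{\mathrm{h},\mathrm{t}\}$ carry a fair coin under $P$. The filtration is trivial at time $0$ and reveals the coin from time $1$ onward. The active sets will be $I_0=I_1=\{1\}$ and $I_t=\{1,2\}$ for $t\geq2$; these are deterministic, finite and increasing, with union $\Iuniv=\{1,2\}$. The idea is to exploit the fact that, without coherence, the certificate actually consulted at a stopping time, namely $E^{\calN_\tau(P)}$, can be the ``lucky'' one on each event.

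Define the three fixed-intersection processes by $E_0^S=1$ for every nonempty $S$, and for $t\geq1$
\[
  E_t^{\{1\}}=2\cdot\one\{\mathrm{h}\},\qquad
  E_t^{\{2\}}=E_t^{\{1,2\}}=2\cdot\one\{\mathrm{t}\}.
\]
Each of these processes is adapted and nonnegative. Each is constant after time $1$, and its time-$1$ value has mean one given the trivial $\F_0$. Hence each is a test martingale, and by optional stopping it is an $e$-process for its intersection null, which verifies Definition~\ref{def:intersection-system}. Coherence fails at $t=1$ on the event $\{\mathrm{h}\}$: there $I_1=\{1\}$, and Equation~\eqref{eq:future-coherence} with $S=\{1,2\}$ would require $E_1^{\{1\}}=2\leq E_1^{\{1,2\}}=0$.

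Next, check membership in the closure family Equation~\eqref{eq:dynamic-closure} at $\alpha=1/2$.
\begin{itemize}
\item At $t=1$ on $\{\mathrm{h}\}$, the only nonempty $S\subseteq I_1$ is $\{1\}$. Since $\FDP_{\{1\}}(\{1\})=1\leq\tfrac12\cdot2$, the set $\{1\}$ belongs to $\Rfam_{1/2,1}$.
\item At $t=2$ on $\{\mathrm{t}\}$, take $R=\{2\}$. The constraints are $\FDP_{\{1\}}(R)=0$, $\FDP_{\{2\}}(R)=1\leq\tfrac12\cdot2$ and $\FDP_{\{1,2\}}(R)=1\leq\tfrac12\cdot2$. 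So $\{2\}\in\Rfam_{1/2,2}$.
\end{itemize}
Let $\tau=1$ on $\{\mathrm{h}\}$ and $\tau=2$ on $\{\mathrm{t}\}$. Because $\{\tau=1\}=\{\mathrm{h}\}\in\F_1$, this is a bounded stopping time. On each outcome the closure at $\tau$ contains a nonempty rejection set consisting only of true nulls. Therefore $\loss_\tau^{\calN(P)}(\Rfam_{1/2}(\mathbf E))=1$ pointwise, and the stopped FDR equals one, which exceeds $\alpha=1/2$.

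The only step needing care is the design. I first check that a single fixed time cannot work: at any deterministic $t$, the fixed intersection $\calN_t(P)$ has a certificate of mean at most one. Rejecting an all-null set requires that certificate to be at least $1/\alpha=2$ whenever the set is certified, so this can happen only with probability at most $1/2$. The counterexample must therefore let the random active true-null intersection $\calN_\tau(P)$ switch between $\{1\}$ and $\{1,2\}$ according to the event on which each certificate is large. This is precisely the mechanism that Lemma~\ref{lem:random-intersection} blocks through coherence. Once that is arranged, the remaining verifications are the finite checks above.
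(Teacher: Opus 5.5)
Your proposal is correct and follows essentially the same approach as the paper: a two-point fair-coin model with both nulls equal to the full model, the same three test martingales $E_t^{\{1\}}=2\one\{\mathrm{h}\}$ and $E_t^{\{2\}}=E_t^{\{1,2\}}=2\one\{\mathrm{t}\}$, the same stopping time, and the same coherence violation at time one. The only difference is cosmetic: you take $I_0=\{1\}$ where the paper takes $I_0=\varnothing$, which is immaterial because $\tau\geq1$.
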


\begin{proof}
Let the statistical model consist of one distribution $P$ on $\Omega=\{a,b\}$ with $P(a)=P(b)=1/2$, and let both null hypotheses equal the full model. Put $I_0=\varnothing$, $I_1=\{1\}$, and $I_2=\{1,2\}$, with $I_t=I_2$ thereafter. Let $\F_0$ be trivial and $\F_t=\sigma(\{a\})$ for $t\geq1$.

For each nonempty $S\subseteq\{1,2\}$, set $E_0^S=1$ and, for $t\geq1$, define
\[
 E_t^{\{1\}}=2\one_{\{a\}},\qquad
 E_t^{\{2\}}=2\one_{\{b\}},\qquad
 E_t^{\{1,2\}}=2\one_{\{b\}}.
\]
Each process has conditional expectation one at the first update and is constant afterwards, so every $E^S$ is a nonnegative test martingale for $H_S$.

Define the global stopping time
\[
 \tau=\begin{cases}1,&\omega=a,\\2,&\omega=b.\end{cases}
\]
On $\{a\}$, at time one the set $R=\{1\}$ satisfies its only nonempty closure constraint because
\[
 \FDP_{\{1\}}(R)=1=\tfrac12 E_1^{\{1\}}.
\]
On $\{b\}$, at time two the set $R=\{2\}$ satisfies all three constraints: the $\{1\}$-constraint has loss zero, while
\[
 \FDP_{\{2\}}(R)=\FDP_{\{1,2\}}(R)=1
 =\tfrac12E_2^{\{2\}}=\tfrac12E_2^{\{1,2\}}.
\]
Thus the dynamic closure contains an all-null rejection set at the stopping time on every sample path. Hence
\[
 \max_{R\in\Rfam_{1/2,\tau}(\mathbf E)}\FDP_{\{1,2\}}(R)=1
 \quad\text{almost surely},
\]
and the stopped FDR equals one.

The failure is precisely a coherence violation. At time one on $\{a\}$,
\[
 E_1^{\{1\}}=2>0=E_1^{\{1,2\}},
\]
so the stopping rule selects favourable evidence from different intersection indices on different paths. Each deterministic-index process is valid, but the random-index process is not controlled.
\end{proof}

\begin{remark}[Scope of the necessity statement]\label{rem:coherence-necessity}
Theorem~\ref{thm:forward-stopped} proves sufficiency of fixed-intersection $e$-process validity plus future-extension coherence. Proposition~\ref{prop:coherence-necessary} shows that the first condition alone is insufficient: a stopping rule can select different valid intersection processes on different sample paths and thereby violate stopped FDR. The proposition does not claim that every noncoherent collection fails; a coherence violation may be irrelevant if its constraint never binds. The statement is just about what can be guaranteed uniformly from fixed-intersection validity alone.
\end{remark}

For fixed $P$, the active true-null set $\calN_t(P)$ changes with $t$. Although $E^S$ is an $e$-process for every deterministic nonempty $S$, fixed-intersection validity does not by itself control the certificate indexed by $\calN_\tau(P)$: at the stopping time, both the time and the intersection index are data-dependent. Optional stopping controls a fixed $e$-process evaluated at a random time; it does not allow the data to choose among different $e$-processes.

Localization removes the random index. Fix a deterministic horizon $T$. Under fixed $P$, the terminal true-null set $\calN_T(P)$ is a deterministic finite set. For any stopping time bounded by $T$,
\[
  \calN_\tau(P)=\calN_T(P)\cap I_\tau.
\]
On the event $\calN_\tau(P)\neq\varnothing$, future-extension coherence therefore gives
\[
  E_\tau^{\calN_\tau(P)}\leq E_\tau^{\calN_T(P)}.
\]
The right-hand side is one fixed $e$-process evaluated at the random time $\tau$, so its expectation is controlled. Proposition~\ref{prop:coherence-necessary} shows why this replacement is needed: without coherence, a stopping rule can switch between valid intersection processes on different sample paths.

\begin{lemma}[Random-intersection localization]\label{lem:random-intersection}
Let $\mathbf E$ be future-extension coherent. For fixed $P\in\M$, define
\[
  Z_t(P):=\one\{\calN_t(P)\neq\varnothing\}E_t^{\calN_t(P)}.
\]
Then for every almost surely finite global stopping time $\tau$,
\begin{equation}\label{eq:random-intersection-stopped}
  \E_P[Z_\tau(P)]\leq1.
\end{equation}
If $\mathbf E$ is also time-monotone, then
\begin{equation}\label{eq:random-intersection-max}
  \E_P\!\left[\sup_{t\geq0}Z_t(P)\right]\leq1.
\end{equation}
\end{lemma}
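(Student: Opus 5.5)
Fix $P\in\M$. The plan is to reduce the stopped bound to bounded stopping times and then use the localization sketched just before the statement. First take a stopping time $\tau\le T$ for a deterministic $T$. Then $\calN_\tau(P)=\calN_T(P)\cap I_\tau$, because the active sets increase and $\tau\le T$.

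\emph{Bounded case.} If $\calN_T(P)=\varnothing$, then $Z_\tau=0$. Otherwise set $S:=\calN_T(P)$, which is a deterministic nonempty finite set. On $\{\calN_\tau(P)\neq\varnothing\}$ we have $S\cap I_\tau\neq\varnothing$, so future-extension coherence \eqref{eq:future-coherence} gives
\[
Z_\tau(P)=E_\tau^{S\cap I_\tau}\le E_\tau^{S}.
\]
Off that event $Z_\tau=0\le E_\tau^S$ by nonnegativity. The inequality is applied pathwise at the random time $\tau(\omega)$, which is legitimate because coherence holds for every $t$ pointwise. Since $P\in H_{\calN(P)}\subseteq H_S$, $E^S$ is an $e$-process under $P$. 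Optional stopping for $e$-processes then gives $\E_P[Z_\tau]\le\E_P[E^S_\tau]\le1$.

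\emph{General case.} Let $\tau$ be a.s. finite. Then $\tau\wedge T$ is a bounded stopping time and $Z_{\tau\wedge T}\to Z_\tau$ almost surely as $T\to\infty$, because the sequence is eventually constant on $\{\tau<\infty\}$. Fatou's lemma yields $\E_P[Z_\tau]\le\liminf_T\E_P[Z_{\tau\wedge T}]\le1$.

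This step is the main subtlety, and I avoid it by truncating. When $\calN(P)$ is infinite there is no single terminal intersection; in fact $H_{\calN(P)}$ is not even an indexed process, since only finite $S$ are indexed. Truncation sidesteps this because $\calN_T(P)$ is finite for each $T$.

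\emph{Maximal statement.} For each $T$, set $S_T:=\calN_T(P)$ and assume it is nonempty; otherwise $\max_{t\le T}Z_t=0$. For every $t\le T$, coherence as above gives $Z_t\le E^{S_T}_t$, and time monotonicity gives $E_t^{S_T}\le E_T^{S_T}$. Hence
\[
\max_{t\le T}Z_t(P)\le E_T^{S_T},
\]
whose $P$-expectation is at most one because $T$ is deterministic. Finally, $\max_{t\le T}Z_t\uparrow\sup_tZ_t$, so monotone convergence gives \eqref{eq:random-intersection-max}.

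The maximum must be taken before passing to the limit, and it must be bounded by a single fixed intersection $S_T$ for each $T$. This is what lets time monotonicity absorb the supremum over times. Proposition~\ref{prop:monotone-eprocess-characterization} is not needed beyond this deterministic-time bound.
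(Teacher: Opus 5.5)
Your proof is correct and takes essentially the same route as the paper. You truncate at a deterministic horizon $T$ and use future-extension coherence to dominate $Z$ by the single fixed-intersection process $E^{\calN_T(P)}$, then remove the truncation with Fatou's lemma for the stopped bound and with time monotonicity plus monotone convergence for the supremum.
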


\proofidea
For a bounded stopping time $\tau\leq T$, the random active true-null set satisfies $\calN_\tau(P)=\calN_T(P)\cap I_\tau$. Future-extension coherence therefore bounds its certificate by the fixed-intersection process $E^{\calN_T(P)}$ at the same stopping time. Ordinary optional stopping applies to that fixed process. The maximal statement uses time monotonicity to dominate every earlier certificate by the single terminal value $E_T^{\calN_T(P)}$. The complete argument, including passage to unbounded almost surely finite stopping times, is in Appendix~\ref{app:proof-localization}.

The lemma is the only step that couples hypothesis arrival with optional stopping. Fixed-intersection $e$-process validity controls the random time, and future-extension coherence removes the random intersection index. Time monotonicity extends the same comparison from one stopping time to the maximum over time.

\subsection{Why stopped FDR is not enough for persistence}\label{sec:separation}

The next example shows why the persistent principle requires stronger control than validity at every stopping time.

\begin{example}[Stopped FDR without SupFDR]\label{ex:stopped-not-sup}
Let $\alpha=1/2$. There are two hypotheses, with $H_1$ true and $H_2$ false, and both are active from time zero. At time zero, before a fair coin $X$ is observed, the candidate family is
\[
  \C_0=\bigl\{\{1,2\}\bigr\}.
\]
At time one it becomes
\[
  \C_1=
  \begin{cases}
    \bigl\{\{1\}\bigr\},&X=1,\\
    \bigl\{\{2\}\bigr\},&X=0,
  \end{cases}
\]
and remains unchanged thereafter. Let $\F_0$ be trivial and $\F_t=\sigma(X)$ for $t\geq1$.

The FDP is $1/2$ at time zero. At time one it is one when $X=1$ and zero when $X=0$, so its expectation is again $1/2$. Because no nontrivial event is observable at time zero, every stopping rule either stops immediately on all paths or reaches time one before making a data-dependent decision. Hence stopped FDR is controlled at level $1/2$.

The pathwise maximum is different:
\[
  \sup_t\FDP(R_t)=
  \begin{cases}
    1,&X=1,\\
    1/2,&X=0,
  \end{cases}
\]
so
\begin{equation}\label{eq:separation}
  \E\!\left[\sup_t\FDP(R_t)\right]
  =\frac12\cdot1+\frac12\cdot\frac12
  =\frac34>\frac12.
\end{equation}
\end{example}

In this example the canonical current-loss certificate for the true intersection equals one at time zero and equals either two or zero at time one. It is valid at every stopping time, but its running maximum has expectation $3/2$. Thus stopped validity of a certificate does not imply validity of its running maximum. The candidate family is not persistent: the set $\{1,2\}$, which is certified at time zero, is no longer certified at time one.When the candidate family is setwise persistent, the loss profile itself is nondecreasing and the distinction between fixed-time FDR, stopped FDR, and SupFDR disappears; see Proposition~\ref{prop:persistent-fdr-equivalence}.

\subsection{The forward validity theorems}\label{sec:forward}

The forward theorem now follows by applying the closure constraint to the active true-null intersection and then invoking Lemma~\ref{lem:random-intersection}.

\begin{theorem}[Dynamic $e$-closure principle for stopped FDR]\label{thm:forward-stopped}
Let $\mathbf E$ be a future-extension coherent dynamic intersection $e$-process collection. Then $\Rfam_\alpha(\mathbf E)$ controls simultaneous stopped FDR at level $\alpha$: for every $P\in\M$ and every almost surely finite global stopping time $\tau$,
\begin{equation}\label{eq:forward-stopped-result}
  \E_P\!\left[
  \max_{R\in\Rfam_{\alpha,\tau}(\mathbf E)}
  \FDP_{\calN(P)}(R)
  \right]\leq\alpha.
\end{equation}
Consequently, any $\F_\tau$-measurable selection from the candidate family controls stopped FDR.
\end{theorem}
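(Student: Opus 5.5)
The plan is a pathwise bound followed by Lemma~\ref{lem:random-intersection}. Fix $P\in\M$ and an almost surely finite global stopping time $\tau$. I will show the pathwise inequality
\[
  \max_{R\in\Rfam_{\alpha,\tau}(\mathbf E)}\FDP_{\calN(P)}(R)\leq\alpha Z_\tau(P),
  \qquad Z_t(P)=\one\{\calN_t(P)\neq\varnothing\}E_t^{\calN_t(P)},
\]
on $\{\tau<\infty\}$. Taking expectations and applying Equation~\eqref{eq:random-intersection-stopped} then gives Equation~\eqref{eq:forward-stopped-result} directly.

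For the pathwise inequality, fix $\omega$ with $\tau(\omega)=t<\infty$ and any $R\in\Rfam_{\alpha,t}(\mathbf E)$, so $R\subseteq I_t$. By locality, Equation~\eqref{eq:fdp-locality}, we have $\FDP_{\calN(P)}(R)=\FDP_{\calN_t(P)}(R)$. There are two cases.
\begin{itemize}
\item If $\calN_t(P)=\varnothing$, the loss is zero and the bound holds because $Z_t(P)\ge 0$.
\item Otherwise $S:=\calN_t(P)$ is a nonempty subset of $I_t$. Here $S$ is deterministic given $P$ and $t$, so it is one of the index sets constrained in Equation~\eqref{eq:dynamic-closure}. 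The closure constraint then gives $\FDP_S(R)\leq\alpha E_t^S=\alpha Z_t(P)$.
\end{itemize}
Maximizing over the finite family $R\in\Rfam_{\alpha,t}(\mathbf E)$ gives the claim. Measurability of the maximum is a routine check: $\Rfam_{\alpha,t}$ is defined by finitely many $\F_t$-measurable inequalities, and $I_t$ is finite. Together with the convention for the empty family, this gives an $\F_\tau$-measurable random variable.

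For the consequence, let $R_\tau$ be any $\F_\tau$-measurable selection with $R_\tau\in\Rfam_{\alpha,\tau}(\mathbf E)$. Its FDP is dominated pathwise by the maximum, so it inherits the bound.

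The only substantive step is the passage from the random index $\calN_\tau(P)$ to an expectation bound. That step is exactly what Lemma~\ref{lem:random-intersection} supplies, using coherence together with localization to a fixed terminal intersection, so it is taken as given here. The remaining care point is to note that the closure constraint is applied at the realized time to the realized active true-null set. This is legitimate because the constraint in Equation~\eqref{eq:dynamic-closure} holds for every nonempty $S\subseteq I_t$ simultaneously, pathwise, so no selection issue arises.
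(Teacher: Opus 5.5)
Your proposal is correct and follows the paper's proof essentially verbatim. Both use locality to rewrite the loss on the active true-null set, apply the closure constraint pathwise to get the bound $\alpha Z_\tau(P)$ (zero when $\calN_\tau(P)=\varnothing$), maximize over candidates, and take expectations via Lemma~\ref{lem:random-intersection}. Note that the closure family always contains $\varnothing$, so the empty-family convention you mention is never actually needed.
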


\proofidea
At $\tau$, the closure constraint gives
$\max_{R\in\Rfam_{\alpha,\tau}(\mathbf E)}\FDP_{\calN(P)}(R)\leq\alpha Z_\tau(P)$, with both sides zero when no true null is active. Lemma~\ref{lem:random-intersection} gives $\E_P[Z_\tau(P)]\leq1$. The complete proof is in Appendix~\ref{app:proof-forward-stopped}.

\begin{theorem}[Persistent dynamic $e$-closure principle]\label{thm:forward-persistent}
Let $\mathbf E$ be future-extension coherent and time-monotone. Then:
\begin{enumerate}
\item[(a)] the candidate process controls simultaneous SupFDR,
\begin{equation}\label{eq:forward-sup-result}
  \E_P\!\left[
  \sup_{t\geq0}\max_{R\in\Rfam_{\alpha,t}(\mathbf E)}
  \FDP_{\calN(P)}(R)
  \right]\leq\alpha;
\end{equation}
\item[(b)] the candidate families are setwise persistent: $R\in\Rfam_{\alpha,t}(\mathbf E)$ and $u\geq t$ imply $R\in\Rfam_{\alpha,u}(\mathbf E)$.
\end{enumerate}
\end{theorem}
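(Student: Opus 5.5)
Part (a) follows the route of Theorem~\ref{thm:forward-stopped}, using the maximal half of Lemma~\ref{lem:random-intersection}; part (b) is a direct check of the defining constraints in \eqref{eq:dynamic-closure}. For (a), fix $P\in\M$ and a time $t$, and set $B_t:=\calN_t(P)=\calN(P)\cap I_t$. We first need a pathwise bound for every $R\in\Rfam_{\alpha,t}(\mathbf E)$. Since $R\subseteq I_t$, locality \eqref{eq:fdp-locality} gives $\FDP_{\calN(P)}(R)=\FDP_{B_t}(R)$. If $B_t=\varnothing$, this loss is zero. If $B_t\neq\varnothing$, then $B_t$ is a nonempty subset of $I_t$, so the closure constraint with $S=B_t$ gives $\FDP_{B_t}(R)\leq\alpha E_t^{B_t}$. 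In both cases
\[
  \max_{R\in\Rfam_{\alpha,t}(\mathbf E)}\FDP_{\calN(P)}(R)\leq\alpha Z_t(P)
\]
pointwise on $\Omega$, with $Z_t(P)$ as in Lemma~\ref{lem:random-intersection}. The bound is vacuous when the closure family is empty, but by \eqref{eq:dynamic-closure} the closure always contains $R=\varnothing$. Taking the supremum over $t$ and expectations, \eqref{eq:random-intersection-max} yields $\E_P[\sup_t Z_t(P)]\leq1$, which proves \eqref{eq:forward-sup-result}. Measurability of $\sup_t\max_R$ is routine: the maximum at each time is measurable because $I_t$ is finite, and the supremum is over a countable index set.

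For (b), let $R\in\Rfam_{\alpha,t}(\mathbf E)$, let $u\geq t$, and view $R$ as a subset of $I_u\supseteq I_t$. Fix a nonempty $S\subseteq I_u$ and put $B:=S\cap I_t$.
\begin{itemize}
\item If $B=\varnothing$, then $S\cap R=\varnothing$ because $R\subseteq I_t$, so $\FDP_S(R)=0\leq\alpha E_u^S$ by nonnegativity of $E_u^S$.
\item If $B\neq\varnothing$, the argument chains four facts:
\[
  \FDP_S(R)=\FDP_B(R)\leq\alpha E_t^B\leq\alpha E_t^S\leq\alpha E_u^S .
\]
The equality is locality \eqref{eq:fdp-locality}. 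The first inequality is the time-$t$ constraint for $B$. The second is future-extension coherence \eqref{eq:future-coherence} at time $t$, using $S\cap I_t=B\neq\varnothing$. The third is time monotonicity \eqref{eq:time-monotone}.
\end{itemize}
Hence every constraint at time $u$ holds, and $R\in\Rfam_{\alpha,u}(\mathbf E)$.

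The substantive step is the maximal inequality \eqref{eq:random-intersection-max}. I take it from Lemma~\ref{lem:random-intersection}, whose appendix proof presumably dominates each $Z_t(P)$, for $t\leq T$, by $E_T^{\calN_T(P)}$ through coherence and monotonicity, and then lets $T\to\infty$ by monotone convergence. With that lemma assumed, the only remaining point needing care is the case split on whether $B_t$, respectively $S\cap I_t$, is empty. This matters because coherence is only stated when the intersection with the active set is nonempty.
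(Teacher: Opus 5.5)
Your proof is correct and follows the paper's argument essentially line for line. For (a) you use the pathwise bound by $\alpha Z_t(P)$ together with the maximal half of Lemma~\ref{lem:random-intersection}. For (b) you use the chain locality, time-$t$ feasibility, future-extension coherence, time monotonicity, with the same case split on whether $S\cap I_t$ is empty.
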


\proofidea
The closure inequality gives a pathwise bound by $\alpha Z_t(P)$ at every time, and the maximal part of Lemma~\ref{lem:random-intersection} controls its supremum. Time monotonicity also preserves every constraint satisfied earlier, which gives setwise persistence. The complete proof is in Appendix~\ref{app:proof-forward-persistent}.

\begin{remark}[Reduction to the existing one-axis settings]\label{rem:one-axis-reductions}
If $I_t=[m]$ is fixed, the growing-horizon condition disappears. At a global stopping time $\tau$, the stopped intersection processes are valid intersection $e$-values, so Theorem~\ref{thm:forward-stopped} reduces to fixed-time $e$-closure applied to those values \citep{XuEtAl2026Closure}.

If $I_t=[t]$ and each intersection contributes one $e$-value that is
fixed once available, the separate evidence clock disappears. In
particular, let $(E^S)$ be an increasing $e$-collection in the sense of
\citet{XuFischerRamdas2026Online} and define
\[
\widehat E_t^S:=E^{S\cap[t]}.
\]
Then $\widehat E^S$ is time-monotone, future-extension coherent, and an
$e$-process, and for every $S\subseteq[t]$ its value at time $t$ is
$E^S$. Hence Theorem~\ref{thm:forward-persistent} recovers their online
SupFDR $e$-closure theorem. With continuing evidence updates,
fixed-intersection validity must also hold at global stopping times,
and Lemma~\ref{lem:random-intersection} handles the resulting random
intersection index.
\end{remark}

\begin{remark}[Persistence of the candidate family versus persistent reporting]
	\label{rem:persistence-selection}
	Setwise persistence means that every rejection set certified at an earlier
	time remains certified later. It does not require a researcher who selects
	one set from the candidate family at each time to make nested selections.
	For example, a set $R_t$ may remain available at time $u>t$, while the
	researcher chooses a different certified set $R_u$ that does not contain
	$R_t$. If previously reported discoveries should never be withdrawn, the
	selection rule must therefore impose this separately, for instance by
	requiring $R_t\subseteq R_u$ whenever $t\leq u$.
\end{remark}

\subsection{Necessary-and-sufficient characterization}\label{sec:ns-characterization}

The converse direction is obtained by normalizing the loss profile of a procedure that already satisfies the target error criterion. This gives the following characterizations, with equivalence understood up to containment as in Section~\ref{sec:closure-meanings}.

\begin{theorem}[Necessary and sufficient dynamic $e$-closure for stopped FDR]\label{thm:characterization-stopped}
For a candidate-family process $\C$, the following are equivalent:
\begin{enumerate}
\item[(i)] $\C$ controls simultaneous stopped FDR at level $\alpha$;
\item[(ii)] there exists a future-extension coherent dynamic intersection $e$-process collection $\mathbf E$ such that
\begin{equation}\label{eq:characterization-stopped}
  \C_t\subseteq\Rfam_{\alpha,t}(\mathbf E)
  \qquad\text{for every }t.
\end{equation}
\end{enumerate}
\end{theorem}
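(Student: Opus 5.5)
The implication (ii)$\Rightarrow$(i) will follow from Theorem~\ref{thm:forward-stopped} together with monotonicity of the maximum in the family. The implication (i)$\Rightarrow$(ii) will follow from the canonical normalized-loss construction
\[
  E_t^S:=\alpha^{-1}\loss_t^S(\C),\qquad 0<|S|<\infty,
\]
for which I will check three things: it is a dynamic intersection $e$-process collection, it is future-extension coherent, and it dominates $\C$ through the closure constraints.

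For (ii)$\Rightarrow$(i), fix $P$ and an almost surely finite global stopping time $\tau$. Since $\C_\tau\subseteq\Rfam_{\alpha,\tau}(\mathbf E)$ pathwise, we have
\[
  \loss_\tau^{\calN(P)}(\C)\leq\max_{R\in\Rfam_{\alpha,\tau}(\mathbf E)}\FDP_{\calN(P)}(R).
\]
Here the locality identity \eqref{eq:fdp-locality} identifies $\FDP_{\calN(P)\cap I_\tau}(R)$ with $\FDP_{\calN(P)}(R)$ for $R\subseteq I_\tau$. Taking expectations and invoking Theorem~\ref{thm:forward-stopped} gives the bound $\alpha$.

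For (i)$\Rightarrow$(ii), first note that $\loss_t^S(\C)$ is $\F_t$-measurable. It is a maximum over the finitely many deterministic $R\subseteq I_t$ of $\one\{R\in\C_t\}\FDP_{S\cap I_t}(R)$, and nonemptiness of $\C_t$ makes the maximum well defined. The process is also nonnegative, so $E^S$ is adapted and nonnegative. Next comes $e$-validity. Take $P\in H_S$, so that $S\subseteq\calN(P)$. Then $S\cap I_t\subseteq\calN(P)\cap I_t$, and \eqref{eq:fdp-monotone} gives the pathwise bound $\loss_t^S(\C)\leq\loss_t^{\calN(P)}(\C)$. For every almost surely finite stopping time $\tau$, (i) then yields
\[
  \E_P[E_\tau^S]\leq\alpha^{-1}\E_P[\loss_\tau^{\calN(P)}(\C)]\leq1 .
\]
In particular $E_0^S$ need not be bounded by one pointwise, but its expectation is. That is all Definition~\ref{def:eprocess} asks, because validity is an expectation condition at stopping times, including $\tau=0$.

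Coherence is immediate from the definition of the current loss profile. Since $(S\cap I_t)\cap I_t=S\cap I_t$, we get $E_t^{S\cap I_t}=E_t^S$, so the collection is in fact projectively consistent, and in particular future-extension coherent. For containment, let $R\in\C_t$ and let $S\subseteq I_t$ be nonempty. Then $\FDP_S(R)\leq\loss_t^S(\C)=\alpha E_t^S$, so $R\in\Rfam_{\alpha,t}(\mathbf E)$.

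I do not expect a genuine obstacle. The step requiring the most care is the $e$-process verification. The point there is that the error criterion (i) controls the single random-time quantity $\loss_\tau^{\calN(P)}$, and this quantity must dominate $\loss_\tau^S$ for every fixed $S\subseteq\calN(P)$, including $S$ that are not yet active. This is exactly where the locality convention $S\cap I_t$ in \eqref{eq:current-loss} and the monotonicity \eqref{eq:fdp-monotone} are used. The same convention is also what makes coherence hold with equality.
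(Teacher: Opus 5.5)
Your proposal is correct and follows the paper's proof. For (ii)$\Rightarrow$(i) you reduce to Theorem~\ref{thm:forward-stopped} via $\C_\tau\subseteq\Rfam_{\alpha,\tau}(\mathbf E)$. For (i)$\Rightarrow$(ii) you use exactly the canonical current-loss certificates $E_t^{S,\can}=\alpha^{-1}\loss_t^S(\C)$ of Theorem~\ref{thm:converse-stopped}: validity from configuration monotonicity plus stopped FDR, coherence with equality by locality, and containment from the defining maximum. Your added remarks on measurability, on $E_0^S$ being controlled only in expectation, and on projective consistency are accurate refinements, not departures.
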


\proofidea
The implication (ii)$\Rightarrow$(i) is Theorem~\ref{thm:forward-stopped}. For (i)$\Rightarrow$(ii), normalize the procedure's current worst-case FDP loss to obtain the canonical intersection processes of Theorem~\ref{thm:converse-stopped}; those processes are coherent $e$-processes and their closure contains the original candidate family. The complete proof is in Appendix~\ref{app:proof-char-stopped}.

\begin{theorem}[Necessary and sufficient dynamic $e$-closure for SupFDR]\label{thm:characterization-sup}
For a candidate-family process $\C$, the following are equivalent:
\begin{enumerate}
\item[(i)] $\C$ controls simultaneous SupFDR at level $\alpha$;
\item[(ii)] there exists a future-extension coherent, time-monotone dynamic intersection $e$-process collection $\mathbf E$ such that
\begin{equation}\label{eq:characterization-sup}
  \C_t\subseteq\Rfam_{\alpha,t}(\mathbf E)
  \qquad\text{for every }t.
\end{equation}
\end{enumerate}
Every collection in (ii) also generates setwise persistent closure families.
\end{theorem}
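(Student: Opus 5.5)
The implication (ii)$\Rightarrow$(i) together with the final sentence follows at once from Theorem~\ref{thm:forward-persistent}. Part (a) gives
\[
\E_P\Bigl[\sup_t\max_{R\in\Rfam_{\alpha,t}(\mathbf E)}\FDP_{\calN(P)}(R)\Bigr]\le\alpha .
\]
Since $\C_t\subseteq\Rfam_{\alpha,t}(\mathbf E)$ and $\FDP_{\calN(P)}(R)=\FDP_{\calN(P)\cap I_t}(R)$ by \eqref{eq:fdp-locality}, we get $\sup_t\loss_t^{\calN(P)}(\C)$ dominated by that quantity, so SupFDR holds. Part (b) gives setwise persistence of the closure families generated by any such collection.

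For (i)$\Rightarrow$(ii) I would use the canonical running-loss certificate
\[
  E_t^S:=\alpha^{-1}\runloss_t^S(\C)=\alpha^{-1}\max_{u\le t}\max_{R\in\C_u}\FDP_{S\cap I_u}(R),
\]
for nonempty finite $S$. This process is nonnegative and adapted, because $\C_u$ is $\F_u$-measurable setwise and $I_u$ is finite. It is pathwise nondecreasing in $t$, so it is time-monotone. It is also nondecreasing in $S$, since each $\FDP_{S\cap I_u}$ is monotone in $S$ by \eqref{eq:fdp-monotone}.

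Future-extension coherence needs a little more care, because the time-$u$ loss uses $S\cap I_u$. For $u\le t$ we have $(S\cap I_t)\cap I_u=S\cap I_u$, so $\runloss_t^{S\cap I_t}=\runloss_t^S$, and hence equality \eqref{eq:future-coherence} holds. This is even projective consistency.

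Containment is immediate. For $R\in\C_t$ and nonempty $S\subseteq I_t$,
\[
\FDP_S(R)\le\loss_t^S(\C)\le\runloss_t^S(\C)=\alpha E_t^S,
\]
so $R\in\Rfam_{\alpha,t}(\mathbf E)$.

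The main step is $e$-process validity of $E^S$ for $H_S$. By Proposition~\ref{prop:monotone-eprocess-characterization} it suffices to show $\E_P[\sup_tE_t^S]\le1$ for $P\in H_S$. For such $P$ we have $S\subseteq\calN(P)$, so by \eqref{eq:fdp-monotone},
\[
\sup_t\runloss_t^S(\C)=\sup_t\loss_t^S(\C)\le\sup_t\loss_t^{\calN(P)}(\C),
\]
where $\loss_t^{\calN(P)}(\C)$ uses $\calN(P)\cap I_t\supseteq S\cap I_t$. Taking expectations, the SupFDR hypothesis (i) bounds the right-hand side by $\alpha$, which gives $\E_P[\sup_tE_t^S]\le1$.

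The one subtlety is the case $\alpha$ where $E_0^S$ might exceed one pointwise. The terminal-expectation formulation sidesteps this, because the definition allows $E_0\le1$ only in expectation through stopping at $0$, and that is implied. I expect no real obstacle beyond checking these measurability and locality identities.
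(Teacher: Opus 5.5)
Your proposal is correct and follows the paper's proof. You obtain (ii)$\Rightarrow$(i) and setwise persistence from Theorem~\ref{thm:forward-persistent}, and (i)$\Rightarrow$(ii) from the canonical running-loss certificates $\alpha^{-1}\runloss_t^S(\C)$ of Theorem~\ref{thm:converse-persistent}, with coherence by locality, containment by $\loss_t^S\le\runloss_t^S$, and validity by domination by $\alpha^{-1}\sup_u\loss_u^{\calN(P)}(\C)$. The only cosmetic difference is that you route validity through Proposition~\ref{prop:monotone-eprocess-characterization} instead of bounding $\overline E_\tau^{S,\can}$ at each stopping time directly, and your closing remark about $E_0^S$ is not needed, since the stopping-time expectation bound you verify is exactly the definition.
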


\proofidea
The implication (ii)$\Rightarrow$(i), together with setwise persistence, follows from Theorem~\ref{thm:forward-persistent}. For (i)$\Rightarrow$(ii), use the canonical running-loss processes of Theorem~\ref{thm:converse-persistent}, which are coherent and time-monotone and whose closure contains the original procedure. The complete proof is in Appendix~\ref{app:proof-char-sup}.

These theorems extend the fixed-time representation of \citet{XuEtAl2026Closure} to the joint online/any-time setting. Every procedure satisfying the corresponding error criterion is contained in a coherent dynamic $e$-closure. 


\subsection{Beyond FDP: bounded monotone local losses}\label{sec:general-losses}

The FDP is not essential to the preceding arguments. The proofs only use two of its properties: the loss cannot decrease when more hypotheses are treated as true nulls, and only true nulls contained in the rejection set can contribute to the loss. Thus, as in the fixed-time setting of \citet{XuEtAl2026Closure}, let $f_S(R)\in[0,1]$ be a bounded loss satisfying
\begin{equation}\label{eq:general-loss-monotone}
	S\subseteq T\quad\Longrightarrow\quad f_S(R)\leq f_T(R),
\end{equation}
and
\begin{equation}\label{eq:general-loss-local}
	f_S(R)=f_{S\cap R}(R).
\end{equation}
Define the current and running loss profiles as before, with $f_S(R)$ in place of $\FDP_S(R)$, and replace the dynamic closure constraint by
\begin{equation}
	f_S(R)\leq \alpha E_t^S.
\end{equation}

\begin{theorem}[Dynamic closure for bounded monotone local losses]\label{thm:general-loss}
	Under Equations~\eqref{eq:general-loss-monotone}--\eqref{eq:general-loss-local}, the stopped-loss and persistent supremum-loss results, the canonical converse representations up to containment, and the pointwise minimality results remain valid with $f$ in place of FDP. The corresponding current and running hulls remain closure operators.
\end{theorem}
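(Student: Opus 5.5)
The plan is to re-run each FDP argument with $f$ in place of $\FDP$, keeping track of exactly which properties of FDP each step used, and to check that those properties come from Equations~\eqref{eq:general-loss-monotone}--\eqref{eq:general-loss-local} together with boundedness $f\in[0,1]$.

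\textbf{Step 1: locality and monotonicity.} The FDP proofs use only two facts: monotonicity \eqref{eq:fdp-monotone} and the locality identity \eqref{eq:fdp-locality}. Monotonicity is assumed directly in \eqref{eq:general-loss-monotone}. For locality with $R\subseteq I_t$, note $S\cap R=(S\cap I_t)\cap R$. Applying \eqref{eq:general-loss-local} twice gives
\[
f_S(R)=f_{S\cap R}(R)=f_{(S\cap I_t)\cap R}(R)=f_{S\cap I_t}(R).
\]
I also need the convention that a configuration with no true null costs nothing, i.e.\ $f_\varnothing(R)=0$. This is what makes both sides vanish when $\calN_\tau(P)=\varnothing$. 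If $f_\varnothing(R)=0$ is not already implied by the setup, I would add it as part of the standing convention, as in \citet{XuEtAl2026Closure}; otherwise the empty-intersection case needs its own bookkeeping.

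\textbf{Step 2: forward results.} Horizon invariance (Proposition~\ref{prop:horizon-invariance}) goes through verbatim using the locality identity from Step 1. For Theorem~\ref{thm:forward-stopped}, take $R\in\Rfam_{\alpha,\tau}$ and set $B=\calN_\tau(P)$. Then $f_{\calN(P)}(R)=f_B(R)\le\alpha E_\tau^B$ when $B\neq\varnothing$, and $f_{\calN(P)}(R)=0$ otherwise. Lemma~\ref{lem:random-intersection} does not involve the loss at all, so it applies unchanged and yields the bound $\alpha$. The SupFDR statement and setwise persistence in Theorem~\ref{thm:forward-persistent} follow in exactly the same way.

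\textbf{Step 3: converse, minimality and hulls.} For the converse, I would define the canonical processes as $E_t^S:=\alpha^{-1}\max_{R\in\C_t}f_{S\cap I_t}(R)$ and their running maxima. Each required property then traces to one assumption:
\begin{itemize}
\item Future-extension coherence is automatic, because $S\cap I_t$ is unchanged when $S$ is extended by inactive indices.
\item Containment $\C_t\subseteq\Rfam_{\alpha,t}$ holds by construction.
\item The $e$-process property follows from the error criterion together with monotonicity. For $S\subseteq\calN(P)$ we have $f_{S\cap I_\tau}\le f_{\calN(P)\cap I_\tau}=f_{\calN(P)}$, so the expectation at $\tau$ is at most $\alpha/\alpha=1$.
\item Time monotonicity of the running version is immediate, and Proposition~\ref{prop:monotone-eprocess-characterization} is not needed.
\end{itemize}
Boundedness matters only for measurability and integrability. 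Pointwise minimality and the hull closure-operator properties (extensivity, monotonicity, idempotence) are order-theoretic: they only compare loss profiles configuration by configuration, so they transfer once the canonical processes are defined as above.

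The main obstacle I expect is Step 1's handling of the empty configuration and the locality reduction. I also need to check carefully that every earlier proof, including the appendix arguments for minimality and hulls, invoked FDP only through monotonicity, locality and $[0,1]$-boundedness, and never through its explicit ratio form.
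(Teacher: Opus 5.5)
Your route is the paper's: apply locality twice to get $f_S(R)=f_{S\cap I_t}(R)$ for $R\subseteq I_t$, observe that Lemma~\ref{lem:random-intersection} never involves the loss, let configuration monotonicity carry the canonical converse, and treat the hulls order-theoretically.

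You left the empty configuration as a hedge, but it is a real omission in the statement. The normalization $f_\varnothing(R)=0$ does \emph{not} follow from \eqref{eq:general-loss-monotone}--\eqref{eq:general-loss-local} and $f\in[0,1]$: the constant loss $f\equiv1$ satisfies all three. Without it the forward result is false. If $I_0=\varnothing$, then $\Rfam_{\alpha,0}(\mathbf E)=\{\varnothing\}$ for every collection, and the stopping time $\tau\equiv0$ gives stopped loss $f_\varnothing(\varnothing)=1>\alpha$. The paper's proof misses this when it says the locality identity is the only FDP-specific fact used in the forward proofs. Those proofs also use $\FDP_\varnothing(R)=0$ in the case $\calN_\tau(P)=\varnothing$. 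So commit to adding the normalization as a hypothesis. An equivalent form, given locality, is $f_S(R)=0$ whenever $S\cap R=\varnothing$, and both examples in the paper satisfy it.

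Contrary to your Step 3, minimality is not purely order-theoretic, and neither is setwise persistence. Both proofs use $\FDP_\varnothing(R)=0$ for configurations $S$ with $S\cap I_t=\varnothing$. For minimality this is essential: neither coherence nor coverage constrains $E_t^S$ for such $S$. Replacing $E_t^S$ by $\one\{S\cap I_t\neq\varnothing\}E_t^S$ therefore still gives a valid coherent covering collection. Meanwhile $E_t^{S,\can}=\alpha^{-1}\max_{R\in\C_t}f_\varnothing(R)$ can be positive, so the canonical certificate is no longer a lower bound. Only the canonical converse representations and the hull properties survive without the normalization. Once you adopt it as a standing assumption, your argument proves the theorem.
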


Indeed, when $R\subseteq I_t$, locality gives
\begin{equation}
	f_S(R)=f_{S\cap I_t}(R),
\end{equation}
which is the only additional identity needed in the forward proofs. Monotonicity in $S$ gives the comparison needed in the converse and minimality arguments. Appendix~\ref{sup:losses} gives the details. Examples include the familywise-error loss $f_S(R)=\one\{|S\cap R|>0\}$ and weighted FDP losses with nonnegative weights.

\subsection{Setwise persistence removes the distinction between error criteria}

For a setwise persistent candidate family, the current loss profile is itself nondecreasing in time. This makes the fixed-time, stopped, and supremum criteria equivalent.

\begin{proposition}[Persistence collapses the fixed-time, stopped, and supremum criteria]\label{prop:persistent-fdr-equivalence}
Suppose that $\C$ is setwise persistent. Then the following are equivalent at level $\alpha$:
\begin{enumerate}
\item[(i)] for every $P\in\M$ and every deterministic time $t$,
\[
  \E_P\!\left[\loss_t^{\calN(P)}(\C)\right]\leq\alpha;
\]
\item[(ii)] $\C$ controls simultaneous stopped FDR in the sense of Definition~\ref{def:stopped-fdr};
\item[(iii)] $\C$ controls simultaneous SupFDR in the sense of Definition~\ref{def:supfdr}.
\end{enumerate}
\end{proposition}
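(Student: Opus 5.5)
The plan is to show that setwise persistence makes the current loss profile $\loss_t^{\calN(P)}(\C)$ pathwise nondecreasing in $t$, and then to get all three equivalences from monotone convergence, as in Proposition~\ref{prop:monotone-eprocess-characterization}.

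\emph{Step 1 (monotonicity).} Fix $P$ and $t\leq u$. Setwise persistence gives $\C_t\subseteq\C_u$, where each $R\subseteq I_t$ is identified with the same subset of $I_u$. For such an $R$, the locality identity \eqref{eq:fdp-locality} gives
\[
\FDP_{\calN(P)\cap I_t}(R)=\FDP_{\calN(P)}(R)=\FDP_{\calN(P)\cap I_u}(R).
\]
So a set certified at time $t$ contributes the same FDP at time $u$. Maximizing over the larger family $\C_u$ then yields $\loss_t^{\calN(P)}(\C)\leq\loss_u^{\calN(P)}(\C)$ pathwise. This identification step is the only point requiring care, and the main thing to check is that locality is applied with $R\subseteq I_t\subseteq I_u$. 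It follows that $\loss_t^{\calN(P)}(\C)=\runloss_t^{\calN(P)}(\C)$, and that $\sup_t\loss_t^{\calN(P)}(\C)=\lim_t\loss_t^{\calN(P)}(\C)$.

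\emph{Step 2 (the cycle).} Each implication is short.
\begin{itemize}
\item (iii)$\Rightarrow$(ii): for any almost surely finite stopping time $\tau$, $\loss_\tau^{\calN(P)}(\C)\leq\sup_t\loss_t^{\calN(P)}(\C)$ almost surely. This step does not need persistence.
\item (ii)$\Rightarrow$(i): take deterministic stopping times $\tau\equiv t$.
\item (i)$\Rightarrow$(iii): by Step 1 the losses are nonnegative, bounded by one, and increase pathwise to the supremum. Monotone convergence then gives
\[
\E_P\Big[\sup_t\loss_t^{\calN(P)}(\C)\Big]=\lim_{t\to\infty}\E_P\big[\loss_t^{\calN(P)}(\C)\big]\leq\alpha.
\]
\end{itemize}
Measurability of each $\loss_t$ is already granted in the setup, so the supremum is measurable as a countable supremum. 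There is no genuine obstacle beyond Step 1.
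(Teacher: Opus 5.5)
Your proposal is correct and follows essentially the same route as the paper. Setwise persistence together with the locality identity \eqref{eq:fdp-locality} makes $t\mapsto\loss_t^{\calN(P)}(\C)$ pathwise nondecreasing; then (i)$\Rightarrow$(iii) is monotone convergence, (iii)$\Rightarrow$(ii) is domination of the stopped loss by the supremum, and (ii)$\Rightarrow$(i) uses deterministic stopping times.
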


\begin{proof}
Fix $P\in\M$. Setwise persistence makes $t\mapsto\loss_t^{\calN(P)}(\C)$ pathwise nondecreasing. Indeed, if $s\leq t$ and $R\in\C_s$, then $R\in\C_t$, and because $R\subseteq I_s$,
\[
  \FDP_{\calN(P)\cap I_s}(R)
  =\FDP_{\calN(P)\cap I_t}(R).
\]
Hence (i) implies (iii) by monotone convergence. The implication (iii)$\Rightarrow$(ii) follows from
\[
  \loss_\tau^{\calN(P)}(\C)
  \leq \sup_{t\geq0}\loss_t^{\calN(P)}(\C),
\]
and (ii)$\Rightarrow$(i) follows by taking $\tau=t$ deterministic.
\end{proof}

Thus, for persistent families, fixed-time control already bounds the pathwise maximum. The separation in Example~\ref{ex:stopped-not-sup} is possible only because certified actions may disappear.

\subsection{A practical design recipe}

The preceding results suggest the following practical recipe:
\begin{enumerate}
\item \textbf{Specify the information set.} Choose the global filtration and construct coordinate or intersection processes that are valid at stopping times in that filtration. If only local-filtration processes are available, the lifting route of Section~\ref{sec:adjuster} can be used first.
\item \textbf{Make future hypotheses compatible.} Choose intersection evidence or a merger family whose treatment of dormant future hypotheses satisfies future-extension coherence; projective consistency is a convenient stronger condition.
\item \textbf{Apply closure.} At every update, certify exactly the rejection sets satisfying the deterministic constraints in Equation~\eqref{eq:dynamic-closure}.
\item \textbf{Add persistence only if needed.} For SupFDR and nonretractable certification, use a time-monotone intersection collection. A nested selected rejection sequence is an additional reporting choice and is not required for the candidate-family guarantee.
\end{enumerate}
Caveat: as in ordinary closed testing, the general closure constraints can be exponential in the number of active hypotheses. Efficient algorithms are for future work.

\section{Admissible \texorpdfstring{$e$}{e}-process mergers across growing horizons}\label{sec:mergers}

We now construct intersection processes from coordinate $e$-processes. For each fixed coordinate set, a pointwise merger must preserve $e$-process validity under optional stopping. Across coordinate sets, the merger rules must also be compatible when new hypotheses enter. The first requirement reduces to the fixed-dimensional $e$-merging problem; the second yields the cross-horizon rigidity theorem.

\subsection{Pointwise mergers of arbitrary \texorpdfstring{$e$}{e}-processes}

Fix a positive integer $K$. Following the $e$-merging terminology of \citet{VovkWang2021,Wang2025Merging}, an \emph{arbitrary-dependence $e$-merging function} is an increasing Borel map $F:[0,\infty)^K\to[0,\infty)$ such that $F(X_1,\ldots,X_K)$ is an $e$-variable whenever $X_1,\ldots,X_K$ are arbitrary, possibly dependent, $e$-variables. Throughout this subsection, all process inputs are adapted to the same filtration and are valid for the same null hypothesis. This common-filtration requirement is essential: an $e$-process valid only in a smaller local filtration need not be valid at a stopping time based on a larger filtration \citep{WangDandapanthulaRamdas2025}. Section~\ref{sec:adjuster} discusses the lifting construction of \citet{ChoeRamdas2026} for coordinate processes that are initially available only in local filtrations.

\begin{definition}[Pointwise $e$-process merger]\label{def:process-merger}
An increasing Borel map $F:[0,\infty)^K\to[0,\infty)$ is a \emph{pointwise $e$-process merger} if, for every filtered statistical model, every null hypothesis $H$, and every $K$-tuple of $H$-$e$-processes $M^1,\ldots,M^K$ in the same filtration, the process
\begin{equation}\label{eq:pointwise-process-merger}
  t\longmapsto F(M_t^1,\ldots,M_t^K)
\end{equation}
is an $H$-$e$-process.
\end{definition}

A pointwise merger uses only the current vector $(M_t^1,\ldots,M_t^K)$ and not the earlier paths. It is \emph{admissible} if no other pointwise merger dominates it everywhere and is strictly larger somewhere. The next theorem identifies this process-level class with ordinary arbitrary-dependence $e$-merging.

\begin{theorem}[Static-to-any-time transfer for pointwise mergers]\label{thm:process-merger-equivalence}
For an increasing Borel map $F:[0,\infty)^K\to[0,\infty)$, the following are equivalent:
\begin{enumerate}
\item[(i)] $F$ is an arbitrary-dependence $e$-merging function;
\item[(ii)] $F$ is a pointwise $e$-process merger.
\end{enumerate}
Moreover, admissibility is the same under the two definitions. Combining this equivalence with the fixed-dimensional admissible-merger characterization of \citet{Wang2025Merging} (see also the simpler proof of \citet{Clerico2026Merging}), $F$ is an admissible pointwise $e$-process merger if and only if there are coefficients $\lambda_0,\lambda_1,\ldots,\lambda_K\geq0$ with $\sum_{k=0}^K\lambda_k=1$ such that
\begin{equation}\label{eq:wang-process-form}
  F(x_1,\ldots,x_K)=\lambda_0+\sum_{k=1}^K\lambda_kx_k.
\end{equation}
\end{theorem}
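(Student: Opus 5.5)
The plan is to prove the two implications directly, using only the definitions. The admissibility statement then follows because the two classes of maps coincide, and the explicit form \eqref{eq:wang-process-form} is imported from \citet{Wang2025Merging}. Throughout, I use the standard convention that $F$ is an arbitrary-dependence $e$-merging function if $\E_Q[F(X_1,\ldots,X_K)]\leq1$ for every probability measure $Q$ and every nonnegative $X_1,\ldots,X_K$ with $\E_Q[X_k]\leq1$. Equivalently, the condition is checked over every joint law on $[0,\infty)^K$ whose marginal means are at most one.

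\textbf{(i)$\Rightarrow$(ii).} Fix a filtered model, a null $H$, $H$-$e$-processes $M^1,\ldots,M^K$ in a common filtration $(\F_t)$, a distribution $P\in H$, and an almost surely finite stopping time $\tau$. First, the process $N_t:=F(M_t^1,\ldots,M_t^K)$ is nonnegative and adapted, because $F$ is Borel and each $M^k_t$ is $\F_t$-measurable. Second, on the event $\{\tau<\infty\}$, which has $P$-probability one, the stopped values $X_k:=M^k_\tau$ are nonnegative random variables; on the null complement I set them to zero. Each satisfies $\E_P[X_k]\leq1$ by the $e$-process property of $M^k$. These $X_k$ may be arbitrarily dependent, but that is exactly what (i) permits. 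Applying (i) under the single measure $Q=P$ gives $\E_P[N_\tau]=\E_P[F(X_1,\ldots,X_K)]\leq1$. Since $P$ and $\tau$ were arbitrary, $N$ is an $H$-$e$-process.

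\textbf{(ii)$\Rightarrow$(i).} Let $Q$ be any law on $[0,\infty)^K$ with marginal means at most one. I would build a specific filtered model: take $\Omega=[0,\infty)^K$ with its Borel $\sigma$-field, $\M=H=\{Q\}$, and the filtration $\F_t:=\F$ for every $t$. I then define the constant processes $M_t^k(\omega):=\omega_k$. Every stopped value equals $\omega_k$, so each $M^k$ is an $H$-$e$-process in this common filtration. Applying (ii) and evaluating at the deterministic time $\tau=0$ gives $\E_Q[F(\omega_1,\ldots,\omega_K)]\leq1$, which is (i). The only care needed is that Definition~\ref{def:process-merger} quantifies over \emph{every} filtered statistical model, so this tailored model is allowed.

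\textbf{Admissibility and the explicit form.} By the two implications, the set of increasing Borel maps satisfying (i) equals the set satisfying (ii). Admissibility is defined in both settings as non-domination within the respective class, using the same pointwise order on functions. Equal classes therefore have the same admissible elements. The final display is then precisely Wang's fixed-dimensional characterization of admissible arbitrary-dependence $e$-mergers: these are the convex combinations of the constant $1$ and the coordinates, $\lambda_0+\sum_k\lambda_kx_k$ with $\sum_{k=0}^K\lambda_k=1$ (see also \citet{Clerico2026Merging}).

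The main obstacle is definitional rather than analytic. It consists of making sure that the quantifier over models in (ii) matches the quantifier over joint laws in Wang's definition, and that the null event $\{\tau=\infty\}$ and the Borel-measurability and monotonicity conventions do not cause the classes to differ. The constant-process construction in the second implication is the step I would check most carefully.
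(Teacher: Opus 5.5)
Your proposal is correct and follows essentially the same route as the paper. Both prove (i)$\Rightarrow$(ii) by applying the static merger property to the stopped values, (ii)$\Rightarrow$(i) by embedding static $e$-values as constant processes and evaluating at a deterministic time, and both import the admissible form from \citet{Wang2025Merging}. The only difference is the embedding. You take $\F_t=\F$ for all $t$ and $M_t^k=\omega_k$ from time zero, which is valid under the paper's definition since only $\E_P[E_\tau]\leq1$ is required. The paper instead uses a trivial $\F_0$ with $M_0^k=1$ and $M_t^k=X_k$ for $t\geq1$, evaluated at $\tau=1$. That choice has the small advantage of also satisfying conventions that require a trivial initial $\sigma$-field or $E_0\leq1$ pointwise.
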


\proofidea
At any global stopping time, the coordinates of a tuple of $e$-processes are arbitrarily dependent $e$-values, so every ordinary $e$-merger is a pointwise process merger. Conversely, arbitrary $e$-values can be embedded in processes that equal one at time zero and are constant thereafter. The two merger classes are therefore identical, and Wang's theorem gives the affine form. The complete proof is in Appendix~\ref{app:proof-process-merger}.

\begin{remark}[Pointwise versus history-dependent merging]\label{rem:process-merger-scope}
Optional stopping does not change the class of pointwise mergers when all coordinates are valid in the same global filtration. A history-dependent merger would instead use the paths $((M_u^k)_{u\leq t})_{k\leq K}$. However, the sequential one-step $e$-value setting of \citet{VovkWang2024Sequential} has a different input structure. Characterization of history-dependent mergers of parallel evolving $e$-processes is another direction for future work.
\end{remark}

\subsection{Neutral padding and projectivity}\label{sec:weighted-construction}

Suppose $M_i=(M_{i,t})_{t\geq0}$ is a global $e$-process for $H_i$, and let $a_i$ be the activation time of hypothesis $i$. We use the dormant convention
\begin{equation}\label{eq:dormant}
  M_{i,t}=1\qquad(t<a_i).
\end{equation}
The value one is neutral evidence. For finite $B\subseteq S$, define the neutral-padding embedding
\begin{equation}\label{eq:padding-map}
  \iota_{B,S}(x_B):=(x_B,\mathbf1_{S\setminus B}).
\end{equation}

\begin{definition}[Neutral-padding projective merger family]\label{def:projective-merger}
A \emph{neutral-padding projective pointwise $e$-process merger family} is a family
\[
  F_S:[0,\infty)^S\to[0,\infty),\qquad |S|<\infty,
\]
with $F_\varnothing=1$, such that each nonempty $F_S$ is a pointwise $e$-process merger and, for every finite $B\subseteq S$,
\begin{equation}\label{eq:merger-projective}
  F_S\bigl(\iota_{B,S}(x_B)\bigr)=F_B(x_B).
\end{equation}
A family is only \emph{future-extension coherent} if the equality in Equation~\eqref{eq:merger-projective} is replaced by $\geq$.
\end{definition}

The value one is neutral evidence for $e$-values \citep{VovkWang2021}. Neutral padding means adjoining coordinates fixed at one when comparing merger rules at different horizons. We use \emph{projective} in the sense of consistency across finite-dimensional systems; here the consistency maps are the embeddings $\iota_{B,S}$. Equality under these embeddings is stronger than the one-sided no-reversal condition of online closure.

\begin{remark}[Neutral entry and open-minded Bayesianism]\label{rem:open-minded-neutrality}
The neutral-padding convention is analogous to the ``forward-looking'' treatment of newly proposed hypotheses in \citet{SterkenburgDeHeide2022}. There, a new Bayesian hypothesis is assigned the learner's own past predictive distribution as its likelihood for past observations, so its introduction does not change the assessment of those observations. Here a dormant testing coordinate is assigned the neutral $e$-value one, so adding it does not change the current intersection certificate. The mathematical objects and guarantees are different, but in both cases a newly introduced hypothesis enters neutral with respect to past data.
\end{remark}

\begin{remark}[Dormant hypotheses and sleeping experts]\label{rem:sleeping-experts}
Sleeping or specialist experts in online learning can be inactive on selected rounds \citep{FreundEtAl1997Specialists,KoolenVanErven2010}. A dormant hypothesis plays a similar bookkeeping role before activation, represented here by the value one. But futher it is very different: sleeping-expert methods control regret, whereas the present construction concerns validity of evidence and multiple-testing decisions.
\end{remark}

\begin{proposition}[Projective mergers generate dynamic intersection collections]\label{prop:projective-merger}
Let $(F_S)$ be a neutral-padding projective pointwise $e$-process merger family and let the coordinate processes satisfy Equation~\eqref{eq:dormant}. Then
\begin{equation}\label{eq:projective-merger-process}
  E_t^S:=F_S\bigl((M_{i,t})_{i\in S}\bigr)
\end{equation}
forms a projectively consistent dynamic intersection $e$-process collection.
\end{proposition}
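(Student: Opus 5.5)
The plan is to check the three requirements of Definitions~\ref{def:intersection-system} and~\ref{def:projective-consistency} one at a time: adaptedness, $e$-process validity of each $E^S$ for $H_S$, and the neutral-padding identity $E_t^S=E_t^{S\cap I_t}$ together with an empty-set bookkeeping process. Throughout, I read the activation time as $a_i:=\min\{t:i\in I_t\}$. This is well defined because $\bigcup_tI_t=\Iuniv$ and the $I_t$ are nested. Then $i\notin I_t$ is equivalent to $t<a_i$, so the dormant convention~\eqref{eq:dormant} gives $M_{i,t}=1$ for every $i\notin I_t$.

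\emph{Adaptedness and validity.} Fix a nonempty finite $S$. Each $M_{i,t}$ is $\F_t$-measurable and $F_S$ is Borel, so $E_t^S=F_S((M_{i,t})_{i\in S})$ is $\F_t$-measurable. For validity, the point is that $H_S=\bigcap_{i\in S}H_i\subseteq H_i$ for every $i\in S$. Hence each $M_i$ with $i\in S$ is also an $H_S$-$e$-process in the global filtration: $\E_P[M_{i,\tau}]\leq1$ for all $P\in H_S$ and all almost surely finite stopping times $\tau$. We thus have $|S|$ $e$-processes for the single null $H_S$ in a common filtration. Since $F_S$ is a pointwise $e$-process merger (Definition~\ref{def:process-merger}, applied with null hypothesis $H_S$), $t\mapsto F_S((M_{i,t})_{i\in S})$ is an $H_S$-$e$-process. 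Equivalently, by Theorem~\ref{thm:process-merger-equivalence}, at any stopping time the values $M_{i,\tau}$ are arbitrarily dependent $e$-variables under $H_S$, and $F_S$ is an ordinary $e$-merger.

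\emph{Projective consistency.} Set $E_t^\varnothing:=F_\varnothing=1$, which is trivially adapted. Fix $t$ and a finite $S$, and put $B:=S\cap I_t$. Every $i\in S\setminus B$ is inactive at time $t$, so $M_{i,t}=1$. The input vector is therefore $(M_{i,t})_{i\in S}=\iota_{B,S}((M_{i,t})_{i\in B})$ pointwise on $\Omega$. Equation~\eqref{eq:merger-projective} then gives $E_t^S=F_B((M_{i,t})_{i\in B})=E_t^{B}$. When $B=\varnothing$ this is $E_t^S=1=E_t^\varnothing$, again by~\eqref{eq:merger-projective} with $F_\varnothing=1$. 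Since equality implies the inequality~\eqref{eq:future-coherence}, the collection is also future-extension coherent, so Theorems~\ref{thm:forward-stopped} and~\ref{thm:forward-persistent} become available for it.

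The only delicate step is the validity argument. The merger definition requires all inputs to be $e$-processes for one common null and in one common filtration. Each $M_i$ is given only as an $H_i$-$e$-process, so I must explicitly pass to the smaller null $H_S$. I must also use that all $M_i$ are global-filtration processes; the dormant phase is merely the constant $1$, which is adapted and valid. The other potential gap is matching the dormant convention to the active sets, which the definition of $a_i$ above settles.
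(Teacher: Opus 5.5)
Your proof is correct and follows the paper's argument. Validity comes from applying the pointwise process-merger property under the common null $H_S\subseteq H_i$, and projective consistency comes from the dormant value one together with the neutral-padding identity~\eqref{eq:merger-projective}; your explicit handling of adaptedness, of $a_i=\min\{t:i\in I_t\}$, and of the case $S\cap I_t=\varnothing$ via $E^\varnothing=F_\varnothing=1$ fills in details the paper leaves implicit.

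One minor caution concerns your closing remark about the dormant phase. That each padded $M_i$ is a global $e$-process is part of the hypothesis; the constant dormant phase does not supply it automatically, because overwriting the pre-activation values of an $e$-process by ones need not preserve validity at stopping times.
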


\begin{proof}
Under $P\in H_S$, every coordinate $M_i$, $i\in S$, is a $P$-$e$-process in the common global filtration. The process-merger property gives that $E^S$ is an $e$-process. If $B=S\cap I_t$, every coordinate in $S\setminus B$ is dormant and equals one, so Equation~\eqref{eq:merger-projective} gives $E_t^S=E_t^B$.
\end{proof}

The arithmetic mean is admissible at every fixed dimension but is not projective across dimensions:
\[
  F_{\{1\}}(4)=4,
  \qquad
  F_{\{1,2\}}(4,1)=\frac52<4.
\]
Thus ordinary closed-eBH intersection averages can lose evidence when a dormant future hypothesis is appended.

\subsection{Cross-horizon rigidity}\label{sec:rigidity}

\begin{theorem}[Cross-horizon rigidity of admissible process mergers]\label{thm:coherent-merger-rigidity}
Let $(F_S)$, indexed by the finite nonempty sets $S\subseteq\Iuniv$, be a family such that every $F_S$ is an admissible pointwise $e$-process merger. Suppose the family is future-extension coherent under neutral padding:
\begin{equation}\label{eq:merger-coherence}
  F_B(x_B)\leq F_S\bigl(\iota_{B,S}(x_B)\bigr)
  \qquad\text{for every finite nonempty }B\subseteq S\text{ and every }x_B\geq0.
\end{equation}
Then there is a unique family of weights $(w_i)_{i\in\Iuniv}$ satisfying
\begin{equation}\label{eq:global-weight-budget}
  w_i\geq0,\qquad \sum_{i\in\Iuniv}w_i\leq1,
\end{equation}
such that, for every finite nonempty $S$,
\begin{equation}\label{eq:affine-projective}
  F_S(x_S)=1+\sum_{i\in S}w_i(x_i-1).
\end{equation}
In particular, Equation~\eqref{eq:merger-coherence} necessarily holds with equality: every admissible coherent family is neutral-padding projective. The weights are identified by the singleton merger rules; for example,
\begin{equation}\label{eq:recover-weights}
  w_i=F_{\{i\}}(2)-1=1-F_{\{i\}}(0).
\end{equation}
\end{theorem}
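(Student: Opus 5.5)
By Theorem~\ref{thm:process-merger-equivalence}, each admissible $F_S$ has the affine form
\[
F_S(x_S)=\lambda_0^S+\sum_{i\in S}\lambda_i^S x_i,\qquad \lambda^S\geq0,\quad \lambda_0^S+\sum_{i\in S}\lambda_i^S=1.
\]
Using the normalization, I rewrite this as $F_S(x_S)=1+\sum_{i\in S}\lambda_i^S(x_i-1)$ with $\sum_{i\in S}\lambda_i^S\leq1$. The proof then reduces to showing that $\lambda_i^S$ does not depend on $S$. Once that holds, I set $w_i:=\lambda_i^{\{i\}}$, and~\eqref{eq:affine-projective} follows immediately. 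Evaluating the singleton merger at $2$ and at $0$ gives~\eqref{eq:recover-weights}; since affine coefficients are determined by the function, the weights are unique.

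The key step is to plug neutral padding into the coherence inequality~\eqref{eq:merger-coherence}. For $B\subseteq S$ this reads
\[
\sum_{i\in B}\lambda_i^B(x_i-1)\leq\sum_{i\in B}\lambda_i^S(x_i-1)\qquad\text{for all }x_B\geq0,
\]
because the padded coordinates contribute $x_i-1=0$. I then test coordinatewise, setting all coordinates other than $i$ equal to one. Taking $x_i\to\infty$ gives $\lambda_i^B\leq\lambda_i^S$, and taking $x_i=0$ gives $-\lambda_i^B\leq-\lambda_i^S$, so $\lambda_i^B\geq\lambda_i^S$. Hence $\lambda_i^B=\lambda_i^S$ whenever $i\in B\subseteq S$. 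For two arbitrary finite sets $S,S'$ containing $i$, I compare both with $\{i\}\subseteq S\cap S'$, which shows $\lambda_i^S=\lambda_i^{\{i\}}=w_i$. Equality of the coefficients makes the two sides of~\eqref{eq:merger-coherence} identical, so coherence automatically holds with equality and the family is neutral-padding projective.

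It remains to verify the global budget~\eqref{eq:global-weight-budget}. For each finite $S$, the constraint $\sum_{i\in S}w_i=\sum_{i\in S}\lambda_i^S\leq1$ holds. Taking the supremum over finite $S\subseteq\Iuniv$, which is countable, gives $\sum_{i\in\Iuniv}w_i\leq1$, and nonnegativity of the weights is inherited from $\lambda^S\geq0$.

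The main obstacle is the two-sided test. Deriving equality of coefficients, not just an inequality, requires probing both below one ($x_i=0$) and above one ($x_i\to\infty$). This uses that the domain is all of $[0,\infty)$ and that the affine form is exact. The affine form comes from Wang's characterization as transferred by Theorem~\ref{thm:process-merger-equivalence}, and that transfer is precisely where admissibility is used. Everything after it is linear algebra.
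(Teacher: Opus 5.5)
Your proof is correct and follows the paper's argument: Wang's affine form via Theorem~\ref{thm:process-merger-equivalence}, the padded coefficient inequality, a two-sided coordinate test forcing $\lambda_i^S=\lambda_i^B$, and the budget bound obtained from finite partial sums. The differences are cosmetic. You test each coordinate at zero individually, whereas the paper sets all of $B$ to zero and uses the sum of the nonnegative differences. You also compare two sets through the singleton $\{i\}$, whereas the paper compares them through their union; both choices work equally well.
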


\proofidea
By Wang's fixed-dimensional characterization, each $F_S$ has an affine representation with horizon-specific weights $w_{i,S}$ \citep{Wang2025Merging}. Compare this representation with that of a neutral-padded subset. Sending one coordinate to infinity makes each coefficient difference nonnegative; setting all subset coordinates to zero forces the sum of those differences to be nonpositive. Hence every difference is zero, so the weight of coordinate $i$ is the same at every horizon containing it. The complete coefficient argument is in Appendix~\ref{app:proof-rigidity}.

\begin{remark}[Cross-horizon interpretation]\label{rem:wang-comparison}
Wang's theorem gives coefficients $w_{i,S}$ separately for each finite set $S$. Theorem~\ref{thm:coherent-merger-rigidity} shows that coherence forces
\[
  w_{i,S}=w_i\qquad\text{whenever }i\in S,
\]
with $\sum_{i\in\Iuniv}w_i\leq1$. Thus the same coefficient is used for coordinate $i$ at every horizon, and the one-sided coherence inequality becomes equality under neutral padding.
\end{remark}

The singleton rules determine the weights through Equation~\eqref{eq:recover-weights}. Conversely, any nonnegative sequence with total mass at most one defines a projective family through Equation~\eqref{eq:affine-projective}.

The weights therefore cannot be renormalized over the currently active hypotheses when a new hypothesis arrives: renormalization would change the coefficients of earlier coordinates. Under symmetry the restriction is stronger still; Corollary~\ref{cor:symmetric-no-go} shows that on a countably infinite universe only the trivial merger remains. Avoiding the global weight budget requires leaving at least one assumption of Theorem~\ref{thm:coherent-merger-rigidity}, for example by using additional dependence structure or direct intersection evidence.

\begin{corollary}[The globally weighted projective collection]\label{cor:weighted-projective}
Let $M_i$ be global coordinate $e$-processes with dormant value one before activation, and let $(w_i)$ satisfy Equation~\eqref{eq:global-weight-budget}. Then
\begin{equation}\label{eq:weighted-system}
  E_t^{S,w}=1+\sum_{i\in S}w_i(M_{i,t}-1)
\end{equation}
is a neutral-padding projective dynamic intersection $e$-process collection under arbitrary cross-stream dependence.
\end{corollary}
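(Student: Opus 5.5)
The corollary should follow by combining three earlier results. For each finite nonempty $S$, set $F_S(x_S):=1+\sum_{i\in S}w_i(x_i-1)$ and $F_\varnothing=1$. The plan is to verify that $(F_S)$ is a neutral-padding projective pointwise $e$-process merger family in the sense of Definition~\ref{def:projective-merger}. Proposition~\ref{prop:projective-merger} then immediately yields that $E_t^{S,w}=F_S((M_{i,t})_{i\in S})$ is a projectively consistent dynamic intersection $e$-process collection. No assumption on the dependence between streams is needed, because Theorem~\ref{thm:process-merger-equivalence} concerns arbitrary $e$-processes in a common filtration.

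\emph{Step 1 (each $F_S$ is a pointwise merger).} Write
\[
F_S(x_S)=\lambda_0^S+\sum_{i\in S}w_ix_i,\qquad \lambda_0^S:=1-\sum_{i\in S}w_i.
\]
Here $\lambda_0^S\geq0$ because $\sum_{i\in S}w_i\leq\sum_{i\in\Iuniv}w_i\leq1$ by Equation~\eqref{eq:global-weight-budget}, and the coefficients are nonnegative and sum to one. The map is increasing, Borel and nonnegative on $[0,\infty)^S$. It is an arbitrary-dependence $e$-merging function, since for any $e$-variables $X_i$, linearity of expectation gives
\[
\E[F_S(X_S)]\leq\lambda_0^S+\sum_{i\in S} w_i=1.
\]
By the implication (i)$\Rightarrow$(ii) of Theorem~\ref{thm:process-merger-equivalence}, $F_S$ is therefore a pointwise $e$-process merger. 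It is in fact of the admissible form \eqref{eq:wang-process-form}, although admissibility is not needed here.

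\emph{Step 2 (neutral-padding projectivity).} For finite $B\subseteq S$, each padded coordinate $i\in S\setminus B$ equals one and so contributes $w_i(1-1)=0$. Hence $F_S(\iota_{B,S}(x_B))=F_B(x_B)$, and for $B=\varnothing$ this equals $1=F_\varnothing$. This gives Equation~\eqref{eq:merger-projective}.

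\emph{Step 3 (conclusion).} Under $P\in H_S$, each $M_i$ with $i\in S$ is a $P$-$e$-process in the global filtration and satisfies the dormant convention \eqref{eq:dormant}. Proposition~\ref{prop:projective-merger} therefore applies and gives the claim, with $E^\varnothing\equiv1$ as the bookkeeping process.

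I do not expect a real obstacle. The only point needing care is that the global budget $\sum_{i\in\Iuniv}w_i\leq1$, rather than a per-horizon normalization, is what keeps $\lambda_0^S\geq0$ uniformly in $S$. This uniformity is what allows one fixed weight sequence to serve every horizon at once.
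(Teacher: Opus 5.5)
Your proposal is correct and follows the paper's route: each $F_S(x_S)=1+\sum_{i\in S}w_i(x_i-1)$ is a valid affine arbitrary-dependence merger (the global budget keeps the constant term nonnegative), neutral padding leaves it unchanged, and Proposition~\ref{prop:projective-merger} then gives the conclusion. The only difference is that you spell out the validity check, via linearity of expectation and the transfer in Theorem~\ref{thm:process-merger-equivalence}, which the paper's two-line proof leaves implicit.
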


\justification
Neutral coordinates contribute zero to Equation~\eqref{eq:weighted-system}, and the total weight bound makes each finite-dimensional affine merger valid under arbitrary dependence. Proposition~\ref{prop:projective-merger} then gives projective consistency. The complete proof is in Appendix~\ref{app:proof-weighted}.

\begin{example}[Horizon effect with three hypotheses]\label{ex:three-hypothesis-projective}
Take $\alpha=0.2$ and global weights $w_1=1/2$, $w_2=1/4$, $w_3=1/8$. Suppose the current evidence is $(M_1,M_2,M_3)=(9,1,1)$, where hypotheses 2 and 3 are still dormant. The projective certificate for every padded intersection containing hypothesis 1 is
\[
  1+\frac12(9-1)=5.
\]
Hence the singleton rejection $R=\{1\}$ meets the critical constraint $\FDP_S(R)=1\leq\alpha E^S=1$ for every such intersection, independently of whether the horizon is written as $\{1\}$, $\{1,2\}$, or $\{1,2,3\}$.

Arithmetic-mean certificates equal $9$, $5$, and $11/3$ at the same three horizons. The current rejection is certified at the first two horizons and fails the three-hypothesis constraint because $\alpha(11/3)<1$. Each arithmetic mean is a valid fixed-dimensional merger; the discrepancy comes from changing the horizon. If later $M_2$ rises to $5$, the projective certificate becomes $1+\tfrac12(8)+\tfrac14(4)=6$ while retaining the contribution already assigned to hypothesis 1.
\end{example}

The price of projectivity is that the same total weight budget must be shared across the entire hypothesis universe, including hypotheses that have not yet arrived. At a fixed dimension the weights can instead be chosen afresh for that dimension. Theorem~\ref{thm:coherent-merger-rigidity} shows that a global budget is unavoidable within the admissible pointwise merger class under arbitrary dependence.

\subsection{Persistent projective collections by adjustment}\label{sec:adjuster}

Projectivity concerns hypothesis horizons and does not prevent an $e$-process from decreasing over evidence time. To obtain persistence one may calibrate its running maximum \citep{DawidEtAl2011Evidence,TavyrikovGoemanDeHeide2026}. If the intersection processes are already time-monotone, no adjustment is needed. Otherwise, let $A:[1,\infty]\to[0,\infty]$ be a nondecreasing adjuster satisfying
\begin{equation}\label{eq:adjuster-condition}
  \int_1^\infty\frac{A(x)}{x^2}\,dx\leq1.
\end{equation}
For an $e$-process $E$, the adjusted running maximum
\begin{equation}\label{eq:adjusted-process}
  \widetilde E_t:=A\!\left(1\vee\sup_{0\leq u\leq t}E_u\right)
\end{equation}
is a pathwise nondecreasing $e$-process \citep{DawidEtAl2011Evidence}. Appendix~\ref{sup:nonadjuster} shows that time-monotone $e$-processes need not arise from this construction.

The same adjustment can lift validity to a larger filtration. Suppose $M_i$ is an $e$-process in $(\mathcal G^i_t)$ with $\mathcal G^i_t\subseteq\F_t$. Validity in $(\mathcal G^i_t)$ alone does not justify stopping with respect to $(\F_t)$. The lifting theorem of \citet{ChoeRamdas2026} shows that
\[
  L_{i,t}:=A\!\left(1\vee\sup_{0\leq u\leq t}M_{i,u}\right)
\]
is a pathwise nondecreasing $e$-process in $(\F_t)$. It may therefore be evaluated at arbitrary almost surely finite global $(\F_t)$-stopping times.

Filtration lifting changes the dormant value used by the projective construction. Let $a:=A(1)$. If $a=1$, a dormant coordinate remains at the neutral value one and no recentering is needed. Suppose $a<1$. Then a dormant coordinate is mapped to $a$ rather than one, so direct substitution into Corollary~\ref{cor:weighted-projective} would destroy neutral padding by one. Recenter and rescale the lifted coordinate by
\[
  D_{i,t}:=\frac{L_{i,t}-a}{1-a}.
\]
Since $L_{i,t}\geq a$, the process $D_i$ is nonnegative and time-monotone. Moreover, for every $P\in H_i$ and every global stopping time $\tau$,
\[
  \E_P[D_{i,\tau}]
  =\frac{\E_P[L_{i,\tau}]-a}{1-a}\leq1.
\]
Thus $D_i$ is a global $e$-process, and $D_{i,t}=0$ while hypothesis $i$ is dormant. For nonnegative weights satisfying Equation~\eqref{eq:global-weight-budget},
\[
  E_t^S:=\sum_{i\in S}w_iD_{i,t}
\]
forms a global, time-monotone intersection $e$-process collection: under $P\in H_S$ and for every global stopping time $\tau$,
\[
  \E_P[E_\tau^S]\leq\sum_{i\in S}w_i\leq1.
\]
With the bookkeeping value $E_t^\varnothing=0$, dormant coordinates contribute zero, so the collection is projectively consistent. This is a zero-neutral construction and lies outside the neutral-padding-by-one admissible class of Section~\ref{sec:rigidity}. If $\sum_{i\in S}w_i<1$, adding the unused constant mass gives a larger valid fixed-dimensional affine merger, so the zero-neutral construction is conservative. When global-filtration validity can be verified directly, as in Section~\ref{sec:shared-control}, this lifting step is unnecessary.

\begin{proposition}[Persistent projective adjustment]\label{prop:projective-adjustment}
Let $\mathbf E$ be a projectively consistent dynamic intersection $e$-process collection, including its bookkeeping extension $E^\varnothing$. For every finite $S$, define
\[
  \widetilde E_t^S
  :=A\!\left(1\vee\sup_{0\leq u\leq t}E_u^S\right).
\]
For nonempty $S$, $\widetilde E^S$ is a time-monotone $e$-process, and the full collection $\widetilde{\mathbf E}$ is projectively consistent. Consequently, its dynamic closure is setwise persistent and controls simultaneous SupFDR.
\end{proposition}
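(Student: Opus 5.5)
The proof has three parts: validity and monotonicity of each adjusted process, transfer of projective consistency through the adjustment, and an appeal to Theorem~\ref{thm:forward-persistent}.

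\emph{Validity and monotonicity.} Fix a finite nonempty $S$. By Definition~\ref{def:intersection-system}, $E^S$ is an $e$-process for $H_S$ in the global filtration $(\F_t)$. The adjusted running maximum $\widetilde E^S_t=A(1\vee\sup_{u\leq t}E^S_u)$ is adapted, because it is a nondecreasing Borel function of a running maximum of adapted variables. It is pathwise nondecreasing in $t$, because $A$ is nondecreasing and the running maximum is nondecreasing. By the adjuster result quoted in Equation~\eqref{eq:adjusted-process} \citep{DawidEtAl2011Evidence}, it is an $e$-process for $H_S$.

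If one wants a self-contained check, Proposition~\ref{prop:monotone-eprocess-characterization} reduces validity to bounding $\E_P[A(1\vee\sup_t E^S_t)]$ for $P\in H_S$. Apply Ville-type maximal inequalities for the $e$-process $E^S$ at stopping times: $P(\sup_{u\leq T}E_u^S\geq x)\leq 1/x$, using the first-passage stopping time truncated at $T$. Combine this with the layer-cake formula and Equation~\eqref{eq:adjuster-condition}, then let $T\to\infty$ by monotone convergence. This gives $\E_P[\widetilde E^S_\infty]\leq 1$. Hence each $\widetilde E^S$ with $S\neq\varnothing$ is time-monotone and valid, so $\widetilde{\mathbf E}$ is a time-monotone dynamic intersection $e$-process collection.

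\emph{Projective consistency.} This is the step requiring care, because Equation~\eqref{eq:projective} is an identity at a single time $t$, whereas the adjustment uses the whole past path up to $t$. The key observation is that for $u\leq t$ we have $I_u\subseteq I_t$, hence $(S\cap I_t)\cap I_u=S\cap I_u$. Applying projective consistency of $\mathbf E$ at time $u$ to both $S$ and $S\cap I_t$ gives
\[
E_u^S=E_u^{S\cap I_u}=E_u^{(S\cap I_t)\cap I_u}=E_u^{S\cap I_t}\qquad(u\leq t),
\]
with the bookkeeping process $E^\varnothing$ covering the case $S\cap I_u=\varnothing$. The two running maxima up to time $t$ therefore coincide pathwise. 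Applying $A$ yields $\widetilde E^S_t=\widetilde E^{S\cap I_t}_t$. The bookkeeping extension is $\widetilde E^\varnothing$, which is adapted because $E^\varnothing$ is. It is never used as evidence, so its validity is irrelevant. Projective consistency implies future-extension coherence (Definition~\ref{def:future-coherence}), since equality is a special case of the inequality.

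\emph{Conclusion.} The collection $\widetilde{\mathbf E}$ is future-extension coherent and time-monotone. Theorem~\ref{thm:forward-persistent}(a) gives simultaneous SupFDR control, and Theorem~\ref{thm:forward-persistent}(b) gives setwise persistence of $\Rfam_\alpha(\widetilde{\mathbf E})$.

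The main obstacle is the projective consistency step. Monotonicity of the active sets is what allows past values to be reindexed, and the empty-set bookkeeping process must be handled for times before any element of $S$ has arrived.
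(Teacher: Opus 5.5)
Your proposal is correct and follows essentially the same route as the paper. The adjuster theorem gives validity and time monotonicity; the identity $S\cap I_u=(S\cap I_t)\cap I_u$ for $u\leq t$, with the bookkeeping process covering empty intersections, makes the running maxima for $S$ and $S\cap I_t$ coincide; and Theorem~\ref{thm:forward-persistent} then yields persistence and SupFDR control. Your only addition is the optional Ville-plus-layer-cake check of the adjuster step for general $e$-processes, which the paper simply cites.
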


\proofidea
The adjuster theorem gives the $e$-process property and time monotonicity for every nonempty $S$. Projective equality holds at every time before $t$, so the running maxima for $S$ and $S\cap I_t$ agree. Theorem~\ref{thm:forward-persistent} then applies. The complete proof is in Appendix~\ref{app:proof-adjustment}.

\begin{corollary}[Symmetry is impossible on an open-ended horizon]\label{cor:symmetric-no-go}
Assume the hypotheses form a countably infinite universe. Under the assumptions of Theorem~\ref{thm:coherent-merger-rigidity}, suppose in addition that every $F_S$ is symmetric in its coordinates. Then
\[
  F_S(x_S)=1
  \qquad\text{for every finite nonempty }S.
\]
Thus there is no nontrivial symmetric, admissible, arbitrary-dependence, pointwise merger family that is coherent over an unbounded hypothesis horizon. If instead the universe has a known finite size $M$, symmetry forces $w_i=c$ for every coordinate, with $0\leq c\leq1/M$.
\end{corollary}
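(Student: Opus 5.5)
The plan is to start from Theorem~\ref{thm:coherent-merger-rigidity}, which already gives a single global weight sequence $(w_i)_{i\in\Iuniv}$ with $w_i\geq0$ and $\sum_{i\in\Iuniv}w_i\leq1$, together with the representation $F_S(x_S)=1+\sum_{i\in S}w_i(x_i-1)$ for every finite nonempty $S$. The whole argument then reduces to showing that symmetry forces all the $w_i$ to be equal, and afterwards letting the budget constraint act on an infinite (or size-$M$) universe.

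\textbf{Equal weights.} Fix two distinct indices $i,j\in\Iuniv$ and apply symmetry of $F_{\{i,j\}}$ to the point with $x_i=2$, $x_j=1$ and to its transposition $x_i=1$, $x_j=2$. The affine form gives $F_{\{i,j\}}=1+w_i$ in the first case and $1+w_j$ in the second, so symmetry yields $w_i=w_j$. This uses only the two-point sets, so every pair of indices shares the same weight, and hence $w_i=c$ for one constant $c\geq0$. A slight alternative would be to use symmetry of $F_S$ for larger $S$, but pairs suffice and avoid any issue with how coordinates are labelled.

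\textbf{Budget.} On a countably infinite universe, $\sum_{i\in\Iuniv}w_i=\sum_{i}c\leq1$ forces $c=0$, since infinitely many copies of a positive constant diverge. Substituting $c=0$ into the affine form gives $F_S\equiv1$ for every finite nonempty $S$. If instead $|\Iuniv|=M$, the same constraint gives $Mc\leq1$, that is, $0\leq c\leq1/M$. Conversely, any such $c$ yields a symmetric family of the admissible affine form, which shows that this range is exact.

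\textbf{Expected obstacle.} The only delicate point is interpretive rather than computational. Symmetry is a property of each $F_S$ as a function on $[0,\infty)^S$, so to compare $w_i$ with $w_j$ one needs a single $F_S$ that contains both indices. The two-element sets $\{i,j\}$ provide exactly this, since the hypotheses of Theorem~\ref{thm:coherent-merger-rigidity} cover every finite nonempty $S$. Once the global weights are known to be horizon-independent, which is the content of the rigidity theorem, there is no remaining obstacle.
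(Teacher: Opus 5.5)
Your proof is correct and is essentially the paper's own argument. You invoke Theorem~\ref{thm:coherent-merger-rigidity} for the global affine form, use symmetry of each two-element $F_{\{i,j\}}$ (evaluated at the transposed points $(2,1)$ and $(1,2)$) to get $w_i=w_j$, and then let the budget $\sum_i w_i\leq 1$ force $c=0$ on a countably infinite universe, or $Mc\leq1$ on a universe of size $M$.
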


\begin{proof}
For any two distinct indices $i$ and $j$, symmetry of $F_{\{i,j\}}$ and Equation~\eqref{eq:affine-projective} imply $w_i=w_j$. Hence all global weights equal a common value $c\geq0$. On a countably infinite universe the summability condition in Equation~\eqref{eq:global-weight-budget} forces $c=0$, giving $F_S\equiv1$. If the universe has size $M$, the same budget gives $Mc\leq1$.
\end{proof}

On a countably infinite universe, symmetry therefore forces every coordinate weight to zero. A nontrivial symmetric construction on an open-ended horizon must fall outside the class covered by Theorem~\ref{thm:coherent-merger-rigidity}.

\subsection{Beyond pointwise arbitrary-dependence mergers}\label{sec:other-mergers}

The rigidity theorem applies only to pointwise mergers that are valid under arbitrary dependence. Product mergers are neutral under padding by one but require additional assumptions, such as conditional independence or a direct joint-supermartingale argument. Direct intersection betting strategies and history-dependent mergers can also use information excluded by the pointwise formulation. This is left for future work.

\section{A global-filtration construction under shared control}\label{sec:construction}

\subsection{Gaussian coordinate \texorpdfstring{$e$}{e}-processes}\label{sec:shared-control}

Following standard likelihood-ratio betting constructions \citep{ShaferEtAl2011TestMartingales,RamdasWangBook}, consider treatment arms with deterministic activation times $a_i$. At global update $t$, let a fresh control outcome $C_t\sim N(0,1)$ be observed. For every active arm $i$ with $a_i\leq t$, let $X_{i,t}\sim N(\mu_i,1)$. Conditional on $\F_{t-1}$, assume that $C_t$ and the active-arm outcomes are mutually independent with the stated Gaussian laws, and that all active comparisons use the same $C_t$. Put
\begin{equation}\label{eq:shared-z}
  Z_{i,t}:=\frac{X_{i,t}-C_t}{\sqrt2}.
\end{equation}
Then $Z_{i,t}\sim N(\mu_i/\sqrt2,1)$, while $\operatorname{Corr}(Z_{i,t},Z_{j,t})=1/2$ for distinct active arms because the control is shared.

For the one-sided null $H_i=\{\mu_i\leq0\}$ and a fixed $\lambda_i>0$, define
\begin{equation}\label{eq:shared-eprocess}
  M_{i,t}:=
  \prod_{u=a_i}^{t}
  \exp\left(\lambda_i Z_{i,u}-\frac{\lambda_i^2}{2}\right),
  \qquad M_{i,t}=1\quad(t<a_i).
\end{equation}

\begin{proposition}[Global validity with a shared control]\label{prop:shared-control}
For every $i$, $(M_{i,t})_{t\geq0}$ is a nonnegative test supermartingale for $H_i$ relative to the global filtration generated by all control outcomes, all active-arm outcomes, and all past decisions. Hence it is a global $e$-process. The coordinate processes may be strongly dependent across arms.
\end{proposition}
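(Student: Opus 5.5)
The plan is to show directly that each factor of the product in Equation~\eqref{eq:shared-eprocess} has conditional expectation at most one given $\F_{t-1}$ under every $P\in H_i$. The supermartingale property then follows by the tower rule, and the $e$-process property follows from optional stopping. Fix $i$ and $P\in H_i$, so $\mu_i\leq0$. Let $(\F_t)$ be the global filtration generated by all control outcomes, all active-arm outcomes and all past decisions. Nonnegativity is immediate, and $M_{i,t}$ is $\F_t$-measurable because it is a deterministic function of $(C_u,X_{i,u})_{u\leq t}$. For $t<a_i$ the process is constant equal to one, and the initial value is $M_{i,0}\leq1$, so only the increments for $t\geq a_i$ require work.

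For $t\geq a_i$, write $M_{i,t}=M_{i,t-1}\,G_t$ with $G_t=\exp(\lambda_iZ_{i,t}-\lambda_i^2/2)$; when $t=a_i$, take $M_{i,t-1}=1$. The key step uses the modelling assumption that, conditional on $\F_{t-1}$, $C_t\sim N(0,1)$ and $X_{i,t}\sim N(\mu_i,1)$ are independent. Hence, conditionally on $\F_{t-1}$, $Z_{i,t}\sim N(\mu_i/\sqrt2,1)$. The Gaussian moment generating function then gives
\[
\E_P[G_t\mid\F_{t-1}]=\exp\!\left(\lambda_i\mu_i/\sqrt2\right)\leq1,
\]
since $\lambda_i>0$ and $\mu_i\leq0$. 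Pulling out the $\F_{t-1}$-measurable factor $M_{i,t-1}$ gives $\E_P[M_{i,t}\mid\F_{t-1}]\leq M_{i,t-1}$. Thus $M_i$ is a test supermartingale, with equality when $\mu_i=0$.

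The point where care is needed, and the main obstacle, is that the conditional law is taken given the \emph{global} $\F_{t-1}$. This $\sigma$-field contains other arms' past outcomes, past controls and past decisions. The assumption stated in the model is exactly that the fresh outcomes at time $t$ have the stated Gaussian law conditionally on all of $\F_{t-1}$. Past decisions are functions of earlier data, possibly with external randomization that must itself be included in $\F_{t-1}$ and be conditionally independent of the new outcomes. Consequently they cannot bias the new draws. Dependence across arms through the shared $C_t$ is irrelevant, because the argument uses only the conditional marginal law of $Z_{i,t}$. This is also why the conclusion holds despite the correlation $1/2$ between arms.

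Finally, I would invoke optional stopping for nonnegative supermartingales. For bounded $\tau$ this gives $\E_P[M_{i,\tau}]\leq\E_P[M_{i,0}]\leq1$. For an almost surely finite $\tau$, Fatou's lemma applied to $M_{i,\tau\wedge n}\to M_{i,\tau}$ gives $\E_P[M_{i,\tau}]\leq1$. Hence $M_i$ is a global $e$-process in the sense of Definition~\ref{def:eprocess}, and it can therefore be used directly in Corollary~\ref{cor:weighted-projective} without the lifting step of Section~\ref{sec:adjuster}.
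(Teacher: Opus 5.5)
Your proposal is correct and follows the paper's proof essentially step for step. Conditional on the global past $\F_{t-1}$, the contrast $Z_{i,t}$ is $N(\mu_i/\sqrt2,1)$; the Gaussian moment generating function gives $\E_P[G_t\mid\F_{t-1}]=\exp(\lambda_i\mu_i/\sqrt2)\leq1$; pulling out $M_{i,t-1}$ yields the supermartingale property; and optional stopping gives the $e$-process property. Your extra details only make explicit what the paper leaves implicit: adaptedness, the Fatou passage to almost surely finite $\tau$, the remark on randomized decisions, and the convention $a_i\geq1$ behind $M_{i,0}\leq1$, which the paper also uses without stating it.
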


\proofidea
Conditional on the global past, the next treatment--control contrast is Gaussian with variance one and null mean at most zero. Its exponential likelihood-ratio increment therefore has conditional expectation at most one. Multiplying these increments gives a test supermartingale in the global filtration despite dependence across arms through the shared control. The calculation is in Appendix~\ref{app:proof-shared-control}.

Thus shared-control dependence does not prevent global validity: for each coordinate, the next multiplicative factor has conditional expectation at most one given the complete past. The example shows how the global-filtration condition can be checked directly without requiring independence across coordinate processes.

\section{Batch any-time benchmarks: closed eBH and closed BY}\label{sec:applications}

When the family is fixed, $I_t=[m]$, cross-horizon compatibility is irrelevant. Arithmetic means of coordinate $e$-processes are valid intersection processes under arbitrary dependence and yield closed eBH. 

\subsection{Stopped closed eBH}\label{sec:closed-ebh}

Let the active family be fixed, $I_t=[m]$ for all $t$, and let $M_i=(M_{i,t})_{t\geq0}$ be a global $e$-process for $H_i$. For every nonempty $S\subseteq[m]$, define the arithmetic-mean intersection process
\begin{equation}\label{eq:cebh-intersection}
  E_t^{S,\ceBH}:=\frac1{|S|}\sum_{i\in S}M_{i,t}.
\end{equation}
Under $H_S$, every coordinate in the average is null, so Equation~\eqref{eq:cebh-intersection} is an $e$-process without any independence assumption. Define the full closed-eBH candidate family
\begin{equation}\label{eq:cebh-family}
  \Rfam_{\alpha,t}^{\ceBH}
  :=\left\{R\subseteq[m]:
  \FDP_S(R)\leq\alpha E_t^{S,\ceBH}
  \text{ for every nonempty }S\subseteq[m]\right\}.
\end{equation}
At each fixed time this is the closed-eBH family of \citet{XuEtAl2026Closure}, built from the eBH procedure of \citet{WangRamdas2022}. It contains the ordinary eBH rejection set and can contain strictly larger or alternative candidate sets.

\begin{corollary}[Stopped closed eBH]\label{cor:stopped-cebh}
The candidate-family process $\Rfam_\alpha^{\ceBH}$ controls simultaneous stopped FDR at level $\alpha$ under arbitrary cross-hypothesis dependence. In particular, any measurable selection from the family at a global stopping time controls stopped FDR.
\end{corollary}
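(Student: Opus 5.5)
The corollary should follow directly from Theorem~\ref{thm:forward-stopped} once we check that the arithmetic-mean collection $(E^{S,\ceBH}_t)$ meets its hypotheses. There are two things to verify. First, each $E^{S,\ceBH}$ is an $e$-process for $H_S$ in the global filtration. Second, the collection is future-extension coherent. After that, Equation~\eqref{eq:cebh-family} is literally $\Rfam_{\alpha,t}(\mathbf E)$ from Equation~\eqref{eq:dynamic-closure} with $I_t=[m]$, so the theorem gives the displayed bound.

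\textbf{Validity.} Fix a nonempty $S\subseteq[m]$, a distribution $P\in H_S$, and an almost surely finite global stopping time $\tau$. For each $i\in S$ we have $P\in H_i$, so $\E_P[M_{i,\tau}]\leq1$ because $M_i$ is a global $e$-process. Linearity of expectation then gives $\E_P[E^{S,\ceBH}_\tau]\leq1$. No dependence assumption is used here: the only inputs are marginal expectations at the same stopping time. Equivalently, we can invoke the (i)$\Rightarrow$(ii) direction of Theorem~\ref{thm:process-merger-equivalence}, since the arithmetic mean is an arbitrary-dependence $e$-merger. Nonnegativity and adaptedness of $E^{S,\ceBH}$ are inherited from the coordinates.

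\textbf{Coherence.} Since $I_t=[m]$ for all $t$, every finite $S$ with $S\cap I_t\neq\varnothing$ that indexes the collection satisfies $S\subseteq[m]$, so $S\cap I_t=S$. The coherence inequality \eqref{eq:future-coherence} therefore holds with equality. The collection is indexed only by subsets of $[m]$, and in the batch setting $\Iuniv=[m]$, so no larger sets need to be handled. The non-projectivity of arithmetic means noted after Proposition~\ref{prop:projective-merger} is therefore irrelevant here.

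\textbf{Conclusion.} Theorem~\ref{thm:forward-stopped} yields $\E_P[\max_{R\in\Rfam^{\ceBH}_{\alpha,\tau}}\FDP_{\calN(P)}(R)]\leq\alpha$. For any $\F_\tau$-measurable selection $\widehat R_\tau$ from the family, $\FDP_{\calN(P)}(\widehat R_\tau)$ is dominated by this maximum, which gives stopped FDR control for the selection. The main point requiring care is the validity step: the coordinate $e$-processes must be valid in the common global filtration in which $\tau$ is a stopping time. This is exactly the standing assumption that each $M_i$ is a global $e$-process.
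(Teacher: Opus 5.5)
Your proposal is correct and follows the paper's proof. Linearity of expectation at the global stopping time shows that each arithmetic mean $E^{S,\ceBH}$ is an $H_S$-$e$-process, after which Theorem~\ref{thm:forward-stopped} applies; your explicit check that future-extension coherence holds with equality (because $I_t=[m]=\Iuniv$) just spells out what the paper compresses into ``since the active family is fixed.''
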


\justification
For every intersection $S$, the arithmetic mean of the globally valid coordinate processes is an $H_S$-$e$-process. Since the active family is fixed, Theorem~\ref{thm:forward-stopped} applies directly. The complete proof is in Appendix~\ref{app:proof-stopped-cebh}.

The coordinate processes must be valid in the filtration used for the global stopping rule. Local-stream validity alone need not survive stopping after inspecting the joint closed-eBH output \citep{WangDandapanthulaRamdas2025}. Section~\ref{sec:adjuster} gives a filtration-lifting construction that can be applied before forming the intersection averages.

\subsection{Persistent closed eBH}\label{sec:persistent-cebh}

Let $A$ be an adjuster and define coordinatewise
\begin{equation}\label{eq:adjusted-coordinate}
  \widetilde M_{i,t}
  :=A\!\left(1\vee\sup_{0\leq u\leq t}M_{i,u}\right).
\end{equation}
Each $\widetilde M_i$ is a time-monotone global $e$-process. The adjusted closed-eBH intersection collection is
\begin{equation}\label{eq:adjusted-cebh-intersection}
  \widetilde E_t^{S,\ceBH}
  :=\frac1{|S|}\sum_{i\in S}\widetilde M_{i,t}.
\end{equation}

\begin{corollary}[Persistent closed eBH]\label{cor:persistent-cebh}
The closed-eBH family generated by Equation~\eqref{eq:adjusted-cebh-intersection} is setwise persistent and controls simultaneous SupFDR at level $\alpha$. At every time it contains ordinary eBH applied to the adjusted coordinates.
\end{corollary}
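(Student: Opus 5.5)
The plan is to verify the two hypotheses of Theorem~\ref{thm:forward-persistent} for the collection $\widetilde{\mathbf E}^{\ceBH}=(\widetilde E^{S,\ceBH})$, and then to check the eBH containment by a direct constraint computation. Because the active family is fixed, $I_t=[m]$, future-extension coherence is vacuous: for every $t$ and every nonempty $S\subseteq[m]$ we have $S\cap I_t=S$, so Equation~\eqref{eq:future-coherence} holds with equality.

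\textbf{Validity.} By the adjuster result quoted in Section~\ref{sec:adjuster} \citep{DawidEtAl2011Evidence}, each $\widetilde M_i$ is a pathwise nondecreasing global $e$-process for $H_i$. If the $M_i$ were only locally valid, the lifting theorem of \citet{ChoeRamdas2026} would give the same conclusion.

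\textbf{Time monotonicity.} An arithmetic mean of pathwise nondecreasing processes is pathwise nondecreasing.

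\textbf{Intersection validity.} Fix $P\in H_S$ and an almost surely finite global stopping time $\tau$. Linearity gives
\[
  \E_P\bigl[\widetilde E_\tau^{S,\ceBH}\bigr]=\frac1{|S|}\sum_{i\in S}\E_P[\widetilde M_{i,\tau}]\leq1.
\]
Alternatively, Proposition~\ref{prop:monotone-eprocess-characterization} reduces this to terminal expectations, which are also bounded by linearity. So $\widetilde{\mathbf E}^{\ceBH}$ is a coherent, time-monotone dynamic intersection $e$-process collection. Theorem~\ref{thm:forward-persistent}(a) then yields simultaneous SupFDR control, and part (b) yields setwise persistence.

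\textbf{Containment of eBH.} At time $t$, let $R$ be the eBH rejection set at level $\alpha$ applied to $(\widetilde M_{1,t},\ldots,\widetilde M_{m,t})$, and put $k=|R|$. If $k=0$ the set is trivially feasible. Otherwise every $i\in R$ satisfies $\widetilde M_{i,t}\geq m/(\alpha k)$. For any nonempty $S$, nonnegativity gives
\[
  \frac1{|S|}\sum_{i\in S}\widetilde M_{i,t}\geq\frac{|S\cap R|}{|S|}\cdot\frac{m}{\alpha k}\geq\frac{|S\cap R|}{\alpha k},
\]
where the last step uses $|S|\leq m$. Multiplying by $\alpha$ shows that $\FDP_S(R)=|S\cap R|/k\leq\alpha\widetilde E_t^{S,\ceBH}$. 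Hence $R\in\Rfam_{\alpha,t}$ for the adjusted collection.

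The only step that needs care is citing the adjuster theorem in the right filtration, so that each $\widetilde M_i$ is valid at global stopping times. Everything else is linearity or a direct inequality.
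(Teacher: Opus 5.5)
Your proposal is correct and follows the paper's proof. Coherence is trivial because $I_t=[m]$, arithmetic means of the adjusted coordinates are time-monotone $e$-processes, and Theorem~\ref{thm:forward-persistent} then gives SupFDR control and setwise persistence. The one difference is that you verify the eBH containment directly, using $\widetilde M_{i,t}\geq m/(\alpha k)$ for $i\in R$ and $|S|\leq m$, whereas the paper cites the fixed-time closed-eBH containment result of \citet{XuEtAl2026Closure}.
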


\justification
The adjusted coordinates are time-monotone $e$-processes, and so are their intersection averages. Theorem~\ref{thm:forward-persistent} gives the result. Appendix~\ref{app:proof-persistent-cebh} gives the complete proof. The construction remains batch because the arithmetic means depend on the fixed family size.

Adjustment can reduce evidence, so the persistent procedure need not contain closed eBH applied to the unadjusted values at the same nominal level. Adjusting each intersection mean directly is also valid, but then the intersection processes need not retain the coordinatewise closed-eBH form.

\subsection{Other batch any-time benchmarks}\label{sec:closed-by}

Closed BY gives an arbitrary-dependence $p$-value analogue. With the intersection-size-dependent p-to-e calibrators used in fixed-time closed BY \citep{BenjaminiYekutieli2001,XuEtAl2026Closure}, nonincreasing coordinate $p$-processes yield time-monotone intersection $e$-processes; Appendix~\ref{sup:closed-by} gives the construction. Closed BH is different because its fixed-time guarantee relies on PRDS or related expectation conditions \citep{Goeman2026ClosedBH}. The next subsection shows that fixed-time PRDS need not survive global stopping.

\subsection{Why fixed-time PRDS is not enough for stopping-time BH}
\label{sec:why-not-bh}

A direct BH analogue would require a dependence condition that survives the global data-dependent choice of analysis time. The classical BH theorem uses independence or, more generally, PRDS \citep{BenjaminiHochberg1995,BenjaminiYekutieli2001}. Independence of the current $p$-value vector at every deterministic time is still insufficient after global stopping.

The following example assumes more than fixed-time PRDS: the two current $p$-values are independent at every deterministic time, and each coordinate process is valid under optional stopping. Let there be two true null hypotheses and fix $\alpha\in(0,1)$. Let $W$ take values $A,B,N$ with
\[
\Pr(W=A)=\Pr(W=B)=\frac{\alpha}{2},
\qquad
\Pr(W=N)=1-\alpha,
\]
and let $B_1,B_2$ be independent Bernoulli variables, independent of $W$, with
\[
\Pr(B_i=1)=\frac{\alpha}{2}.
\]
Set
\[
\F_0=\{\varnothing,\Omega\},\qquad
\F_1=\sigma(\one_{\{W=A\}}),\qquad
\F_2=\sigma(W),\qquad
\F_3=\sigma(W,B_1,B_2),
\]
and $\F_t=\F_3$ for $t\geq3$. Define $P_{1,0}=P_{2,0}=1$ and, at time one,
\[
(P_{1,1},P_{2,1})
=
\begin{cases}
	(\alpha/2,1),&W=A,\\
	(1,1),&W\neq A.
\end{cases}
\]
At time two, set
\[
(P_{1,2},P_{2,2})
=
\begin{cases}
	(1,\alpha/2),&W=B,\\
	(1,1),&W\neq B,
\end{cases}
\]
and at time three set
\[
P_{i,3}
=
\begin{cases}
	\alpha,&B_i=1,\\
	1,&B_i=0,
\end{cases}
\qquad i=1,2.
\]
Keep the processes constant for $t\geq3$.

At times one and two one coordinate is constant, and at time three independence follows from that of $B_1$ and $B_2$. Hence the two current $p$-values are independent, and therefore PRDS, at every deterministic time. Their full paths are dependent through $W$. Each coordinate is nevertheless valid under optional stopping. The only nontrivial crossing probabilities are
\[
\Pr\left(\inf_t P_{i,t}\leq\frac{\alpha}{2}\right)
=\frac{\alpha}{2}
\]
and
\[
\Pr\left(\inf_t P_{i,t}\leq\alpha\right)
=
\frac{\alpha}{2}+\frac{\alpha}{2}
-\frac{\alpha^2}{4}
=
\alpha-\frac{\alpha^2}{4}
\leq\alpha.
\]
Since the processes take values only in $\{\alpha/2,\alpha,1\}$ (apart from the initial value one), these two inequalities imply
\[
\Pr\left(\inf_{t\geq0} P_{i,t}\leq u\right)\leq u
\qquad\text{for every }u\in[0,1].
\]
Therefore $P_{i,\tau}$ is a valid $p$-value for every almost surely finite $(\F_t)$-stopping time $\tau$.

Now apply ordinary BH at level $\alpha$ to the two current $p$-values,
and let
\[
\tau
:=
\inf\{t\in\{1,2,3\}:
\operatorname{BH}_\alpha(P_{1,t},P_{2,t})\neq\varnothing\}
\wedge3.
\]
On $\{W=A\}$, BH rejects $H_1$ at time one because $P_{1,1}=\alpha/2$. On $\{W=B\}$, no rejection occurs at time one and BH rejects $H_2$ at time two. On $\{W=N\}$ the procedure reaches time three, where BH rejects both hypotheses whenever $B_1=B_2=1$, since then
\[
P_{1,3}=P_{2,3}=\alpha.
\]
Under the global null, every rejection is false.  Hence
\begin{align}
	\operatorname{FDR}(\tau)
	&=
	\Pr\{\operatorname{BH}_\alpha(P_{1,\tau},P_{2,\tau})
	\neq\varnothing\} \notag\\
	&=
	\Pr(W\in\{A,B\})
	+
	\Pr(W=N,B_1=B_2=1) \notag\\
	&=
	\alpha+(1-\alpha)\frac{\alpha^2}{4}
	>\alpha.
	\label{eq:bh-stopping-counterexample}
\end{align}

The stopped marginals remain valid as well.
For either coordinate,
\[
\Pr(P_{i,\tau}\leq\alpha/2)=\frac{\alpha}{2},
\qquad
\Pr(P_{i,\tau}\leq\alpha)
=
\frac{\alpha}{2}
+(1-\alpha)\frac{\alpha}{2}
=
\alpha-\frac{\alpha^2}{2}
\leq\alpha.
\]
Thus the stopped coordinates are still marginally valid $p$-values. Since BH nevertheless has FDR larger than $\alpha$ under the global null, the stopped vector cannot satisfy the PRDS condition needed for the classical BH guarantee \citep{BenjaminiYekutieli2001}.

Fixed-time independence at every deterministic time is therefore insufficient for BH after a global stopping rule. A direct sequential use of the classical theorem would require PRDS of the stopped vector for every admissible global stopping rule, or another joint sequential condition that yields the same FDR bound. This is substantially stronger than fixed-time PRDS.

For this reason we do not use ordinary BH as an any-time benchmark: the conditions needed are probably very strong and unrealistic; but this is left for future work, if it still turns out to be interesting. Procedures specifically designed for online or sequential BH-type control can impose conditions suited to that setting \citep{FischerXuRamdas2024OnlineBH}. 

\section{Necessity, canonical representations, and minimality}\label{sec:converse}

The converse direction uses the normalized-loss construction of fixed-time $e$-closure \citep{XuEtAl2026Closure}. Starting from a candidate-family process that already satisfies the target error criterion, we use its own worst-case loss as intersection evidence. The resulting dynamic closure contains the original procedure. Section~\ref{sec:minimality} shows that these canonical certificates are the smallest coherent certificates with this covering property. Proposition~\ref{prop:coherence-necessary} concerns the separate question of which assumptions are needed for the forward theorem.

\subsection{Canonical necessity representation for stopped FDR}

\begin{theorem}[Canonical stopped-FDR representation]\label{thm:converse-stopped}
Suppose $\C$ controls simultaneous stopped FDR at level $\alpha$. For every nonempty finite $S$, define
\begin{equation}\label{eq:canonical-current}
  E_t^{S,\can}:=\frac1\alpha\loss_t^S(\C)
  =\frac1\alpha\max_{R\in\C_t}\FDP_{S\cap I_t}(R).
\end{equation}
Then $\mathbf E^{\can}$ is a future-extension coherent dynamic intersection $e$-process collection and
\begin{equation}\label{eq:current-containment}
  \C_t\subseteq\Rfam_{\alpha,t}(\mathbf E^{\can})
  \qquad\text{for every }t.
\end{equation}
\end{theorem}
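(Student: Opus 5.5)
The proof establishes three properties of $\mathbf E^{\can}$ in turn: it is adapted and nonnegative, it is future-extension coherent, and each $E^{S,\can}$ is an $e$-process for $H_S$. Containment then follows directly from the definitions.

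\emph{Adaptedness and nonnegativity.} Nonnegativity holds because $\FDP\geq0$. Since $I_t$ is finite and $\{R\in\C_t\}\in\F_t$ for every deterministic $R\subseteq I_t$, we can write
\[
\loss_t^S(\C)=\max_{R\subseteq I_t}\one\{R\in\C_t\}\FDP_{S\cap I_t}(R),
\]
which is a finite maximum of $\F_t$-measurable variables. Here we use that $\C_t$ is nonempty, so the maximum is attained on the family itself.

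\emph{Coherence.} Since $(S\cap I_t)\cap I_t=S\cap I_t$, the definition gives $E_t^{S\cap I_t,\can}=E_t^{S,\can}$ identically. So the collection is in fact projectively consistent, with bookkeeping value $E^{\varnothing,\can}_t=0$, and in particular future-extension coherent.

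\emph{$e$-process property.} This is the main step. Fix a nonempty finite $S$, take $P\in H_S$, and let $\tau$ be an almost surely finite stopping time. Then $S\subseteq\calN(P)$, so by the FDP monotonicity \eqref{eq:fdp-monotone}, applied inside the maximum,
\[
\loss_\tau^S(\C)\leq\loss_\tau^{\calN(P)}(\C)
\]
pathwise. Indeed, $S\cap I_\tau\subseteq\calN(P)\cap I_\tau$, and the maximum over $R\in\C_\tau$ preserves the inequality. Taking expectations and invoking simultaneous stopped FDR control (Definition~\ref{def:stopped-fdr}) gives
\[
\E_P[E_\tau^{S,\can}]\leq\alpha^{-1}\alpha=1.
\]
The only point needing care is that the stopping-time class in the definition of an $e$-process coincides with the class in Definition~\ref{def:stopped-fdr}. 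Both are almost surely finite $(\F_t)$-stopping times under the fixed $P$, so no localization argument is required here. This contrasts with Lemma~\ref{lem:random-intersection}, because the index $S$ is now deterministic.

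\emph{Containment.} Let $R\in\C_t$ and let $S\subseteq I_t$ be nonempty. Then $S\cap I_t=S$, so
\[
\FDP_S(R)\leq\max_{R'\in\C_t}\FDP_S(R')=\alpha E_t^{S,\can},
\]
which is exactly the constraint in \eqref{eq:dynamic-closure}. Hence $R\in\Rfam_{\alpha,t}(\mathbf E^{\can})$, which proves \eqref{eq:current-containment}.
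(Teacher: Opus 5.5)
Your proposal is correct and follows the paper's proof essentially step by step. The $e$-process property comes from configuration monotonicity ($S\subseteq\calN(P)$) combined with stopped-FDR control, coherence holds with equality because the canonical loss only sees $S\cap I_t$, and containment follows from the defining maximum. Your explicit measurability argument and the observation that the collection is in fact projectively consistent with $E^{\varnothing,\can}=0$ are harmless refinements of the paper's ``immediate'' and ``equality holds'' remarks.
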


\proofidea
If $P\in H_S$, then $S\subseteq\calN(P)$, so the canonical $S$-loss is bounded by the true-null loss controlled by stopped FDR. Locality gives equality under future extension, and the defining maximum gives containment. The complete proof is in Appendix~\ref{app:proof-converse-stopped}.

\subsection{Canonical necessity representation for SupFDR}

\begin{theorem}[Canonical persistent representation]\label{thm:converse-persistent}
Suppose $\C$ controls simultaneous SupFDR at level $\alpha$. For every nonempty finite $S$, define
\begin{equation}\label{eq:canonical-running}
  \overline E_t^{S,\can}
  :=\frac1\alpha\runloss_t^S(\C)
  =\frac1\alpha\max_{0\leq u\leq t}\max_{R\in\C_u}
  \FDP_{S\cap I_u}(R).
\end{equation}
Then $\overline{\mathbf E}^{\can}$ is a future-extension coherent, time-monotone dynamic intersection $e$-process collection and
\begin{equation}\label{eq:running-containment}
  \C_t\subseteq\Rfam_{\alpha,t}(\overline{\mathbf E}^{\can})
  \qquad\text{for every }t.
\end{equation}
\end{theorem}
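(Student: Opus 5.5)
The plan is to verify four properties of $\overline{\mathbf E}^{\can}$ in turn: adaptedness and nonnegativity, time monotonicity, future-extension coherence, and stopping-time validity. After that we check the containment \eqref{eq:running-containment}.

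\textbf{Measurability and monotonicity.} Since $I_u$ is finite and $\{R\in\C_u\}\in\F_u\subseteq\F_t$ for $u\leq t$, each $\loss_u^S(\C)$ is a finite maximum of indicator-weighted constants. It is therefore $\F_t$-measurable, and so is $\runloss_t^S(\C)$, which is also nonnegative. Time monotonicity is immediate, because $\runloss_t^S$ is a running maximum over $u\leq t$.

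\textbf{Coherence.} Fix $t$ and $S$ with $B:=S\cap I_t\neq\varnothing$. For each $u\leq t$ we have $I_u\subseteq I_t$, so $B\cap I_u=S\cap I_u$. Hence $\FDP_{B\cap I_u}(R)=\FDP_{S\cap I_u}(R)$ for every $R\in\C_u$, and this gives $\overline E_t^{B,\can}=\overline E_t^{S,\can}$. Coherence therefore holds with equality, which is even projective consistency at each time.

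\textbf{Validity.} Let $P\in H_S$, so that $S\subseteq\calN(P)$. By the monotonicity \eqref{eq:fdp-monotone}, for every $u$ we have $\loss_u^S(\C)\leq\loss_u^{\calN(P)}(\C)$. Consequently, for any almost surely finite stopping time $\tau$,
\[
\overline E_\tau^{S,\can}\leq\frac1\alpha\sup_{u\geq0}\loss_u^{\calN(P)}(\C),
\]
and SupFDR control gives $\E_P[\overline E_\tau^{S,\can}]\leq1$. Alternatively, Proposition~\ref{prop:monotone-eprocess-characterization} reduces validity to the bound on $\E_P[\sup_t\overline E_t^{S,\can}]$, which is exactly the display above. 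This step is where SupFDR, rather than stopped FDR, is genuinely needed, and it is the only substantive step. The main subtlety is noting that the supremum over time is controlled jointly with the maximum over the family, which is precisely Definition~\ref{def:supfdr}.

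\textbf{Containment.} Let $R\in\C_t$ and let $S\subseteq I_t$ be nonempty. Then $S\cap I_t=S$, so
\[
\FDP_S(R)\leq\loss_t^S(\C)\leq\runloss_t^S(\C)=\alpha\,\overline E_t^{S,\can}.
\]
Hence $R\in\Rfam_{\alpha,t}(\overline{\mathbf E}^{\can})$, which proves \eqref{eq:running-containment}.
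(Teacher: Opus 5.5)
Your proof is correct and follows essentially the same route as the paper's. The running maximum gives time monotonicity, the identity $(S\cap I_t)\cap I_u=S\cap I_u$ for $u\leq t$ gives coherence with equality, domination by $\frac{1}{\alpha}\sup_{u\geq0}\loss_u^{\calN(P)}(\C)$ together with SupFDR gives stopping-time validity, and $\loss_t^S(\C)\leq\runloss_t^S(\C)$ gives containment; your measurability check is just more explicit than the paper's ``immediate.''
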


\proofidea
The running-loss process is time-monotone by construction. For $P\in H_S$, it is bounded by the true-null running loss controlled by SupFDR. Locality gives future-extension coherence, and the definition gives containment. The complete proof is in Appendix~\ref{app:proof-converse-persistent}.

The current-loss certificate records the largest configuration-wise loss of the candidate family at time $t$; the running-loss certificate records the largest such loss up to time $t$. Theorem~\ref{thm:minimality} shows that no smaller coherent certificate collection can cover the same procedure.

Together with the forward theorems, Theorems~\ref{thm:converse-stopped} and~\ref{thm:converse-persistent} complete the necessary-and-sufficient characterizations. Equality need not hold: the canonical closure can contain rejection sets that were absent from the original procedure but have no larger loss under any configuration.

\subsection{Pointwise minimality}\label{sec:minimality}

\begin{theorem}[Minimality of the canonical certificates]\label{thm:minimality}
Let $\C$ be any candidate-family process.
\begin{enumerate}
\item[(a)] If $\mathbf E$ is future-extension coherent and $\C_t\subseteq\Rfam_{\alpha,t}(\mathbf E)$ for every $t$, then
\begin{equation}\label{eq:min-current}
  E_t^{S,\can}\leq E_t^S
\end{equation}
for every finite nonempty $S$ and $t$.
\item[(b)] If, in addition, $\mathbf E$ is time-monotone, then
\begin{equation}\label{eq:min-running}
  \overline E_t^{S,\can}\leq E_t^S
\end{equation}
for every finite nonempty $S$ and $t$.
\end{enumerate}
\end{theorem}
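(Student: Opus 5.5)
The argument is a direct pointwise comparison: fix $\omega$, $t$ and a finite nonempty $S$. For (a), we must show $\frac1\alpha\max_{R\in\C_t}\FDP_{S\cap I_t}(R)\leq E_t^S$. If $\C_t$ yields zero loss, this is immediate from nonnegativity of $E_t^S$.

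Otherwise, pick a maximizing $R\in\C_t$ with $\FDP_{S\cap I_t}(R)>0$. This forces $B:=S\cap I_t\neq\varnothing$, since an empty configuration gives zero FDP. Because $\C_t\subseteq\Rfam_{\alpha,t}(\mathbf E)$, the closure constraint for the nonempty set $B\subseteq I_t$ gives $\FDP_B(R)\leq\alpha E_t^B$. Future-extension coherence (Definition~\ref{def:future-coherence}) then gives $E_t^B=E_t^{S\cap I_t}\leq E_t^S$. Chaining these, $\alpha E_t^{S,\can}=\FDP_{S\cap I_t}(R)\leq\alpha E_t^S$, which proves (a).

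For (b), apply (a) at every $u\leq t$ to get $E_u^{S,\can}\leq E_u^S$. Time monotonicity (Definition~\ref{def:time-monotone}) gives $E_u^S\leq E_t^S$. Taking the maximum over $0\leq u\leq t$ yields $\overline E_t^{S,\can}=\max_{u\leq t}E_u^{S,\can}\leq E_t^S$.

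The only delicate point is the case $S\cap I_t=\varnothing$. There the closure imposes no constraint and coherence says nothing, so we must check that the canonical certificate vanishes. It does, because $\FDP_\varnothing(R)=0$ by \eqref{eq:fdp}. I would also note that the step replacing $\FDP_{S\cap I_t}$ by the constraint for $B$ uses only the definition of the current closure, not horizon invariance. Apart from this case, the argument is a short chain of the definitions, so I expect no real obstacle.
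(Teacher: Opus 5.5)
Your proof is correct and follows essentially the same route as the paper: you apply closure containment to the nonempty active part $B=S\cap I_t$ and then future-extension coherence $E_t^B\le E_t^S$. The empty case is handled by $\FDP_\varnothing(R)=0$ and nonnegativity, and (b) follows from the current bound at each $u\le t$ together with time monotonicity. The only cosmetic difference is that you split on whether the maximal loss is zero and pick a maximizer, whereas the paper splits on whether $B$ is empty and bounds every $R\in\C_t$ before maximizing.
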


\proofidea
Closure containment gives $\FDP_{S\cap I_t}(R)\leq\alpha E_t^S$ for every original candidate $R$. Maximizing over candidates yields the current canonical bound. If $\mathbf E$ is time-monotone, the same inequality at all earlier times yields the running canonical bound. The complete proof is in Appendix~\ref{app:proof-minimality}.

The normalized-loss construction comes from the fixed-time representation of \citet{XuEtAl2026Closure}. The dynamic statement is pointwise: among future-extension coherent collections whose closure contains $\C$, $E^{S,\can}_t$ is smallest for every $S,t$; among collections that are also time-monotone, $\overline E^{S,\can}_t$ is smallest. This is a minimality statement about certificates needed to cover a fixed procedure, not about the power of the procedure itself.

\subsection{The canonical hull}\label{sec:closure-operator}

Applying the forward closure map to the canonical certificates enlarges the original candidate family only by sets whose configuration-wise loss is already covered. Define the current and running hulls by
\begin{align}
  \Hh_t(\C)
  &:=\left\{R\subseteq I_t:
  \FDP_S(R)\leq\loss_t^S(\C)
  \text{ for every }S\subseteq I_t\right\},
  \label{eq:current-hull}\\
  \Hhsup_t(\C)
  &:=\left\{R\subseteq I_t:
  \FDP_S(R)\leq\runloss_t^S(\C)
  \text{ for every }S\subseteq I_t\right\}.
  \label{eq:running-hull}
\end{align}
Both maps are extensive, monotone, and idempotent; their fixed points are the loss-saturated candidate families. Appendix~\ref{sup:finite-example} gives a finite example and Appendix~\ref{sup:closure-proof} proves the closure-operator properties. \citet{SunWang2026Admissibility} develop a fixed-dimensional complete-class theory for weighted-mean closed-eBH procedures; a corresponding dynamic admissibility theory is left for future work.

\section{Discussion}\label{sec:discussion}

Dynamic $e$-closure combines a growing hypothesis family with continuing evidence updates. Future-extension coherence makes current decisions invariant to finite future horizons and, through localization, reduces the random active true-null intersection at a stopping time to a fixed terminal intersection. Time monotonicity adds SupFDR control and setwise persistence. The closure and representation arguments also apply to bounded configuration-monotone local losses.

For pointwise mergers, optional-stopping validity introduces no new fixed-dimensional merger class: it is equivalent to ordinary arbitrary-dependence $e$-merging. Cross-horizon coherence is restrictive. Together with Wang's characterization, it forces one globally summable weight sequence and exact neutral-padding projectivity. Arithmetic means remain available for fixed families, but an open-ended coherent construction in this admissible pointwise class must use a global weight budget.

The converse results show that every procedure satisfying the target error criterion is contained in a closure generated by its normalized current or running loss. These canonical certificates are pointwise minimal among coherent collections covering the procedure, and their forward closures define the current and running hulls.

The shared-control example verifies the common-filtration condition directly: each coordinate likelihood-ratio process is a test supermartingale in the filtration generated by all streams, even though the coordinates are dependent through the common control.

\subsection{Limitations}\label{sec:limitations}

The theory is developed for deterministic finite active sets and assumes that the relevant coordinate or intersection processes are valid in the global filtration used for stopping. The latter condition is essential for the stopping-time guarantees, but obtaining such globally valid processes may require either a direct model-specific argument or a potentially conservative filtration-lifting step. The general closure is also computationally demanding, since in principle it requires constraints for all subsets of the active family.

Some of the structural results have a narrower scope than the closure theory itself. In particular, the rigidity theorem concerns admissible pointwise mergers that are valid under arbitrary dependence; it does not rule out more flexible constructions that exploit dependence structure, past trajectories, or direct intersection evidence. Similarly, the shared-control construction is intended as an illustration of global-filtration validity rather than a full platform-trial model: it assumes a simple Gaussian setting with deterministic arm activation and does not cover delayed outcomes, time trends, nonconcurrent controls, or regulatory decision rules. Finally, setwise persistence concerns the family of certified rejection sets and does not by itself impose nested reporting decisions.

\subsection{Future work}\label{sec:future-work}

Several extensions follow naturally from these limitations. First, allowing random predictable hypothesis arrivals requires a different localization argument, because for random $I_T$ the terminal true-null set $\calN(P)\cap I_T$ is no longer a fixed intersection. More realistic platform-trial constructions could combine such arrivals with delayed observations, time trends, nonconcurrent controls, and more general stopping or reporting rules.

Second, the merger theory could be extended beyond the pointwise arbitrary-dependence class. Relevant directions include mergers that exploit structured dependence, direct intersection betting, and history-dependent mergers of parallel evolving $e$-processes. For persistent inference, adjusted running maxima provide one general construction, but more powerful ways of constructing time-monotone $e$-processes and calibrating persistent evidence remain to be developed.

Two further questions concern statistical and computational efficiency. The general closure can require exponentially many subset constraints, so scalable algorithms are needed for practically important subclasses. It would also be useful to develop a dynamic admissibility or complete-class theory analogous to the available fixed-dimensional results. Finally, Section~\ref{sec:why-not-bh} shows that fixed-time independence or PRDS is insufficient for ordinary BH after global stopping. Characterizing useful joint sequential conditions under which a stopped BH procedure remains valid (and whether that it is too strong to be useful) is an open question.

\section*{AI-assisted editing statement}
OpenAI's GPT-5.5 Instant  was used during development of the manuscript for language and exposition, generating and refining illustrative examples, assistance with the presentation of proofs, literature searches, and checks of LaTeX cross-references. In particular, the author did many iterations of requesting a thorough review of the draft and acted upon several of the issues that were pointed out by the LLMh. The author remains fully responsible for the content.

\section*{Funding}
This work was supported by NWO Veni grant number VI.Veni.222.018.

\clearpage
\phantomsection
\addcontentsline{toc}{section}{References}
\bibliographystyle{plainnat}
\bibliography{dynamic_eclosure}

@article{ChoeRamdas2026,
	title={Combining evidence across filtrations},
	author={Choe, Yo Joong and Ramdas, Aaditya},
	journal={Journal of the Royal Statistical Society Series B: Statistical Methodology},
	pages={qkag058},
	year={2026},
	publisher={Oxford University Press UK}
}

@article{FischerBofillBrannath2024,
	title={The online closure principle},
	author={Fischer, Lasse and Bofill Roig, Marta and Brannath, Werner},
	journal={The Annals of Statistics},
	volume={52},
	number={2},
	pages={817--841},
	year={2024},
	publisher={Institute of Mathematical Statistics}
}

@misc{FischerRamdas2024,
	title={Admissible online closed testing must employ e-values},
	author={Fischer, Lasse and Ramdas, Aaditya},
	journal={arXiv preprint arXiv:2407.15733},
	year={2024},
	note         = {Version dated 15 December 2025}
}

@misc{FischerXuRamdas2024OnlineBH,
	title={An online generalization of the (e-) Benjamini-Hochberg procedure},
	author={Fischer, Lasse and Xu, Ziyu and Ramdas, Aaditya},
	journal={arXiv preprint arXiv:2407.20683},
	year={2024},
	note = {Version dated 26 February 2026}
}

@misc{Goeman2026ClosedBH,
	title={A Uniform Improvement of the Benjamini-Hochberg Procedure via e-Closure},
	author={Goeman, Jelle},
	journal={arXiv preprint arXiv:2606.01854},
	year={2026},
	note         = {Version dated 10 June 2026}
}

@article{GoemanHemerikSolari2021,
	title={Only closed testing procedures are admissible for controlling false discovery proportions},
	author={Goeman, Jelle J and Hemerik, Jesse and Solari, Aldo},
	journal={The Annals of Statistics},
	volume={49},
	number={2},
	pages={1218--1238},
	year={2021},
	publisher={JSTOR}
}

@article{GoemanSolari2011,
  title={Multiple testing for exploratory research},
author={Goeman, Jelle J and Solari, Aldo},
journal={Statistical Science},
pages={584--597},
year={2011},
publisher={JSTOR}
}

@article{LeeEtAl2021Platform,
  title={Statistical consideration when adding new arms to ongoing clinical trials: the potentials and the caveats},
author={Lee, Kim May and Brown, Louise C and Jaki, Thomas and Stallard, Nigel and Wason, James},
journal={Trials},
volume={22},
number={1},
pages={203},
year={2021},
publisher={Springer}
}

@article{MarcusPeritzGabriel1976,
  title={On closed testing procedures with special reference to ordered analysis of variance},
author={Marcus, Ruth and Eric, Peritz and Gabriel, K Ruben},
journal={Biometrika},
volume={63},
number={3},
pages={655--660},
year={1976},
publisher={Oxford University Press}
}

@misc{RamdasRufLarssonKoolen2020,
  title={Admissible anytime-valid sequential inference must rely on nonnegative martingales},
author={Ramdas, Aaditya and Ruf, Johannes and Larsson, Martin and Koolen, Wouter},
journal={arXiv preprint arXiv:2009.03167},
year={2020},
  note         = {Version dated 5 November 2022}
}

@article{RobertsonWasonRamdas2023,
  title={Online multiple hypothesis testing},
author={Robertson, David S and Wason, James MS and Ramdas, Aaditya},
journal={Statistical science: a review journal of the Institute of Mathematical Statistics},
volume={38},
number={4},
pages={557},
year={2023}
}

@article{SterkenburgDeHeide2022,
  title={On the truth-convergence of open-minded Bayesianism},
author={Sterkenburg, Tom F and De Heide, Rianne},
journal={The Review of Symbolic Logic},
volume={15},
number={1},
pages={64--100},
year={2022},
publisher={Cambridge University Press}
}

@misc{SunWang2026Admissibility,
  title={Admissibility and Complete Classes for False Discovery Rate Control with E-values},
author={Sun, Liulei and Wang, Ruodu},
journal={arXiv preprint arXiv:2607.14380},
year={2026},
note = {Version dated 15 July 2026}
}

@article{TavyrikovGoemanDeHeide2026,
  title={Carefree multiple testing with e-processes},
author={Tavyrikov, Yury and Goeman, Jelle J and de Heide, Rianne},
journal={Electronic Journal of Statistics},
volume={20},
number={2},
pages={3178--3189},
year={2026},
publisher={The Institute of Mathematical Statistics and the Bernoulli Society}
}

@article{VovkWang2021,
  title={E-values: Calibration, combination and applications},
author={Vovk, Vladimir and Wang, Ruodu},
journal={The Annals of Statistics},
volume={49},
number={3},
pages={1736--1754},
year={2021},
publisher={Institute of Mathematical Statistics}
}

@article{VovkWang2024Sequential,
  title={Merging sequential e-values via martingales},
author={Vovk, Vladimir and Wang, Ruodu},
journal={Electronic Journal of Statistics},
volume={18},
number={1},
pages={1185--1205},
year={2024},
publisher={The Institute of Mathematical Statistics and the Bernoulli Society}
}

@article{Wang2025Merging,
  title={The only admissible way of merging arbitrary e-values},
author={Wang, Ruodu},
journal={Biometrika},
volume={112},
number={2},
pages={asaf020},
year={2025},
publisher={Oxford University Press}
}

@article{WangDandapanthulaRamdas2025,
  title={Anytime-valid FDR control with the stopped e-BH procedure},
author={Wang, Hongjian and Dandapanthula, Sanjit and Ramdas, Aaditya},
journal={Statistics \& Probability Letters},
pages={110512},
year={2025},
publisher={Elsevier}
}

@misc{XuEtAl2026Closure,
  title={Bringing closure to false discovery rate control: A general principle for multiple testing},
author={Xu, Ziyu and Solari, Aldo and Fischer, Lasse and de Heide, Rianne and Ramdas, Aaditya and Goeman, Jelle},
journal={arXiv preprint arXiv:2509.02517},
year={2025},
  note         = {Version dated 30 July 2026}
}

@misc{XuFischerRamdas2026Online,
  title={Improving online FDR procedures via online analogs of e-closure and compound e-values},
author={Xu, Ziyu and Fischer, Lasse and Ramdas, Aaditya},
journal={arXiv preprint arXiv:2603.24792},
year={2026},
  note         = {Version dated 8 July 2026}
}

@article{BenjaminiHochberg1995,
  title={Controlling the false discovery rate: a practical and powerful approach to multiple testing},
author={Benjamini, Yoav and Hochberg, Yosef},
journal={Journal of the Royal statistical society: series B (Methodological)},
volume={57},
number={1},
pages={289--300},
year={1995},
publisher={Wiley Online Library}
}

@article{BenjaminiYekutieli2001,
  title={The control of the false discovery rate in multiple testing under dependency},
author={Benjamini, Yoav and Yekutieli, Daniel},
journal={Annals of statistics},
pages={1165--1188},
year={2001},
publisher={JSTOR}
}

@article{WangRamdas2022,
  title={False discovery rate control with e-values},
author={Wang, Ruodu and Ramdas, Aaditya},
journal={Journal of the Royal Statistical Society Series B: Statistical Methodology},
volume={84},
number={3},
pages={822--852},
year={2022},
publisher={Oxford University Press}
}

@article{DawidEtAl2011Evidence,
  title={Insuring against loss of evidence in game-theoretic probability},
author={Dawid, A Philip and de Rooij, Steven and Shafer, Glenn and Shen, Alexander and Vereshchagin, Nikolai and Vovk, Vladimir},
journal={Statistics \& Probability Letters},
volume={81},
number={1},
pages={157--162},
year={2011},
publisher={Elsevier}
}

@article{ShaferEtAl2011TestMartingales,
  title={Test martingales, Bayes factors and p-values},
author={Shafer, Glenn and Shen, Alexander and Vereshchagin, Nikolai and Vovk, Vladimir},
journal={Statistical Science},
pages={84--101},
year={2011},
publisher={JSTOR}
}

@misc{RamdasWangBook,
  title={Hypothesis testing with e-values},
author={Ramdas, Aaditya and Wang, Ruodu},
journal={Foundations and Trends{\textregistered} in Statistics},
volume={1},
number={1-2},
pages={1--390},
year={2025},
publisher={Emerald Publishing Limited}
}

@article{Clerico2026Merging,
  title={A simple geometric proof for the characterisation of e-merging functions},
author={Clerico, Eugenio},
journal={Statistics \& Probability Letters},
pages={110750},
year={2026},
publisher={Elsevier}
}

@article{FosterStine2008,
  title={$\alpha$-investing: a procedure for sequential control of expected false discoveries},
author={Foster, Dean P and Stine, Robert A},
journal={Journal of the Royal Statistical Society Series B: Statistical Methodology},
volume={70},
number={2},
pages={429--444},
year={2008},
publisher={Oxford University Press}
}

@article{JavanmardMontanari2018,
  title={Online rules for control of false discovery rate and false discovery exceedance},
author={Javanmard, Adel and Montanari, Andrea},
journal={The Annals of statistics},
volume={46},
number={2},
pages={526--554},
year={2018},
publisher={JSTOR}
}

@article{Gabriel1969,
  author  = {Gabriel, K. Ruben},
  title   = {Simultaneous test procedures---some theory of multiple comparisons},
  journal = {Annals of Mathematical Statistics},
  year    = {1969},
  volume  = {40},
  number  = {1},
  pages   = {224--250}
}

@inproceedings{FreundEtAl1997Specialists,
  author    = {Freund, Yoav and Schapire, Robert E. and Singer, Yoram and Warmuth, Manfred K.},
  title     = {Using and combining predictors that specialize},
  booktitle = {Proceedings of the Twenty-Ninth Annual ACM Symposium on Theory of Computing},
  year      = {1997},
  pages     = {334--343},
  publisher = {ACM},
  doi       = {10.1145/258533.258616}
}

@misc{KoolenVanErven2010,
  author       = {Koolen, Wouter M. and van Erven, Tim},
  title        = {Freezing and sleeping: Tracking experts that learn by evolving past posteriors},
  year         = {2010},
  howpublished = {arXiv:1008.4654}
}

@inproceedings{XuRamdas2022Dynamic,
  author    = {Xu, Ziyu and Ramdas, Aaditya},
  title     = {Dynamic Algorithms for Online Multiple Testing},
  booktitle = {Mathematical and Scientific Machine Learning},
  series    = {Proceedings of Machine Learning Research},
  volume    = {145},
  pages     = {955--986},
  year      = {2022},
  publisher = {PMLR}
}

@book{Ville1939,
  author    = {Ville, Jean},
  title     = {\'{E}tude critique de la notion de collectif},
  year      = {1939},
  publisher = {Gauthier-Villars},
  address   = {Paris}
}

\clearpage
\appendix

\section{Complete proofs of the main results}\label{app:proofs}

\subsection{Proof of Proposition~\ref{prop:horizon-invariance}: Horizon invariance}\label{app:proof-horizon}

\begin{proof}
The inclusion from left to right follows by taking $S\subseteq I_t$ in Equation~\eqref{eq:horizon-closure}. Conversely, let $R\in\Rfam_{\alpha,t}(\mathbf E)$ and fix $S\subseteq J$ with $B:=S\cap I_t\neq\varnothing$. Current feasibility and future-extension coherence give
\[
  \FDP_{S\cap I_t}(R)=\FDP_B(R)
  \leq\alpha E_t^B\leq\alpha E_t^S.
\]
Hence $R\in\Rfam_{\alpha,t}^{J}(\mathbf E)$. Under projective consistency $E_t^S=E_t^B$, so the two constraints coincide.
\end{proof}

\subsection{Proof of Proposition~\ref{prop:coherence-necessary}: Necessity of coherence for the forward theorem}\label{app:proof-coherence-necessary}

\begin{proof}
Let the statistical model consist of one distribution $P$ on $\Omega=\{a,b\}$ with $P(a)=P(b)=1/2$, and let both null hypotheses equal the full model. Put $I_0=\varnothing$, $I_1=\{1\}$, and $I_2=\{1,2\}$, with $I_t=I_2$ thereafter. Let $\F_0$ be trivial and $\F_t=\sigma(\{a\})$ for $t\geq1$.

For each nonempty $S\subseteq\{1,2\}$, set $E_0^S=1$ and, for $t\geq1$, define
\[
 E_t^{\{1\}}=2\one_{\{a\}},\qquad
 E_t^{\{2\}}=2\one_{\{b\}},\qquad
 E_t^{\{1,2\}}=2\one_{\{b\}}.
\]
Each process has conditional expectation one at the first update and is constant afterwards, so every $E^S$ is a nonnegative test martingale for $H_S$.

Define the global stopping time
\[
 \tau=\begin{cases}1,&\omega=a,\\2,&\omega=b.\end{cases}
\]
On $\{a\}$, at time one the set $R=\{1\}$ satisfies its only nonempty closure constraint because
\[
 \FDP_{\{1\}}(R)=1=\tfrac12 E_1^{\{1\}}.
\]
On $\{b\}$, at time two the set $R=\{2\}$ satisfies all three constraints: the $\{1\}$-constraint has loss zero, while
\[
 \FDP_{\{2\}}(R)=\FDP_{\{1,2\}}(R)=1
 =\tfrac12E_2^{\{2\}}=\tfrac12E_2^{\{1,2\}}.
\]
Thus the dynamic closure contains an all-null rejection set at the stopping time on every sample path. Hence
\[
 \max_{R\in\Rfam_{1/2,\tau}(\mathbf E)}\FDP_{\{1,2\}}(R)=1
 \quad\text{almost surely},
\]
and the stopped FDR equals one.

The failure is precisely a coherence violation. At time one on $\{a\}$,
\[
 E_1^{\{1\}}=2>0=E_1^{\{1,2\}},
\]
so the stopping rule selects favourable evidence from different intersection indices on different paths. Each deterministic-index process is valid, but the random-index process is not controlled.
\end{proof}

\subsection{Proof of Lemma~\ref{lem:random-intersection}: Random-intersection localization}\label{app:proof-localization}

\begin{proof}
Fix $P$ and a deterministic horizon $T$, and let $\sigma_T=\tau\wedge T$. If $\calN_T(P)=\varnothing$, then $Z_{\sigma_T}(P)=0$. Otherwise, on the event $\calN_{\sigma_T}(P)\neq\varnothing$,
\[
  \calN_{\sigma_T}(P)=\calN_T(P)\cap I_{\sigma_T},
\]
so future-extension coherence gives
\[
  Z_{\sigma_T}(P)\leq E_{\sigma_T}^{\calN_T(P)}.
\]
The terminal set $\calN_T(P)$ is deterministic and finite under fixed $P$, and $P\in H_{\calN_T(P)}$. Hence the $e$-process property implies
\[
  \E_P[Z_{\sigma_T}(P)]\leq1.
\]
As $T\to\infty$, $Z_{\sigma_T}(P)\to Z_\tau(P)$ almost surely because $\tau<\infty$. Fatou's lemma proves Equation~\eqref{eq:random-intersection-stopped}.

For the maximal statement, fix $T$. If $\calN_T(P)=\varnothing$, then $Z_t(P)=0$ for every $t\leq T$. Otherwise, future-extension coherence and time monotonicity imply, for every $t\leq T$,
\[
  Z_t(P)\leq E_t^{\calN_T(P)}\leq E_T^{\calN_T(P)}.
\]
Therefore
\[
  \sup_{0\leq t\leq T}Z_t(P)\leq E_T^{\calN_T(P)},
\]
whose expectation is at most one. Letting $T\to\infty$ and applying monotone convergence proves Equation~\eqref{eq:random-intersection-max}.
\end{proof}

\subsection{Proof of Theorem~\ref{thm:forward-stopped}: Dynamic \texorpdfstring{$e$}{e}-closure principle for stopped FDR}\label{app:proof-forward-stopped}

\begin{proof}
For every candidate $R\subseteq I_\tau$, locality and the closure constraint for the active true-null set give
\[
  \FDP_{\calN(P)}(R)=\FDP_{\calN_\tau(P)}(R)
  \leq \alpha Z_\tau(P),
\]
with the convention that the bound is zero when $\calN_\tau(P)=\varnothing$. Maximize over candidates, take expectations, and apply Equation~\eqref{eq:random-intersection-stopped}.
\end{proof}

\subsection{Proof of Theorem~\ref{thm:forward-persistent}: Persistent dynamic \texorpdfstring{$e$}{e}-closure principle}\label{app:proof-forward-persistent}

\begin{proof}
The closure constraint gives, for every $t$,
\[
  \max_{R\in\Rfam_{\alpha,t}(\mathbf E)}\FDP_{\calN(P)}(R)
  \leq\alpha Z_t(P).
\]
Take the supremum and use Equation~\eqref{eq:random-intersection-max}.

For persistence, let $R$ be feasible at time $t$ and fix $S\subseteq I_u$. Put $B=S\cap I_t$. If $B=\varnothing$, then $\FDP_S(R)=0$. Otherwise,
\[
  \FDP_S(R)=\FDP_B(R)
  \leq\alpha E_t^B
  \leq\alpha E_t^S
  \leq\alpha E_u^S,
\]
using current feasibility, future-extension coherence, and time monotonicity.
\end{proof}

\subsection{Proof of Theorem~\ref{thm:characterization-stopped}: Necessary and sufficient dynamic \texorpdfstring{$e$}{e}-closure for stopped FDR}\label{app:proof-char-stopped}

\begin{proof}
The implication (ii)$\Rightarrow$(i) follows from Theorem~\ref{thm:forward-stopped}, because every candidate in $\C_t$ is also a candidate in the larger closure family. The implication (i)$\Rightarrow$(ii) is Theorem~\ref{thm:converse-stopped}, proved in Appendix~\ref{app:proof-converse-stopped}.
\end{proof}

\subsection{Proof of Theorem~\ref{thm:characterization-sup}: Necessary and sufficient dynamic \texorpdfstring{$e$}{e}-closure for SupFDR}\label{app:proof-char-sup}

\begin{proof}
The implication (ii)$\Rightarrow$(i) and setwise persistence follow from Theorem~\ref{thm:forward-persistent}. The implication (i)$\Rightarrow$(ii) is Theorem~\ref{thm:converse-persistent}, proved in Appendix~\ref{app:proof-converse-persistent}.
\end{proof}

\subsection{Proof of Theorem~\ref{thm:process-merger-equivalence}}\label{app:proof-process-merger}

\begin{proof}
Suppose first that $F$ merges arbitrarily dependent $e$-values. Let $M^1,\ldots,M^K$ be $e$-processes for $H$ in one filtration. Let $\tau$ be an almost surely finite global stopping time. For every $P\in H$, each $M_\tau^k$ is an $e$-variable under $P$. Their dependence is unrestricted, so the merger property gives
\[
  \E_P\!\left[F(M_\tau^1,\ldots,M_\tau^K)\right]\leq1.
\]
The output process is adapted because $F$ is Borel. Hence it is an $H$-$e$-process.

Conversely, let $X_1,\ldots,X_K$ be arbitrary, possibly dependent, $e$-variables. On the filtration $\F_0=\{\varnothing,\Omega\}$ and $\F_t=\sigma(X_1,\ldots,X_K)$ for $t\geq1$, define
\[
  M_0^k=1,\qquad M_t^k=X_k\quad(t\geq1).
\]
Each $M^k$ is an $e$-process: because $\F_0$ is trivial, any stopping time either equals zero on every path or is at least one on every path; in the latter case $M_\tau^k=X_k$. Thus optional-stopping validity reduces to $1\leq1$ or $\E[X_k]\leq1$. Applying the process-merger property at the deterministic stopping time $1$ yields
\[
  \E[F(X_1,\ldots,X_K)]\leq1.
\]
Thus $F$ is an ordinary arbitrary-dependence $e$-merging function.

The classes in (i) and (ii) are identical and use the same pointwise domination order, so their admissible elements are identical. Equation~\eqref{eq:wang-process-form} then follows from \citet[Theorem~1]{Wang2025Merging}.
\end{proof}

\subsection{Proof of Proposition~\ref{prop:projective-merger}: Projective mergers generate dynamic intersection collections}\label{app:proof-projective-merger}

\begin{proof}
Under $P\in H_S$, every coordinate $M_i$, $i\in S$, is a $P$-$e$-process in the common global filtration. The process-merger property gives that $E^S$ is an $e$-process. If $B=S\cap I_t$, every coordinate in $S\setminus B$ is dormant and equals one, so Equation~\eqref{eq:merger-projective} gives $E_t^S=E_t^B$.
\end{proof}

\subsection{Proof of Theorem~\ref{thm:coherent-merger-rigidity}: Cross-horizon rigidity of admissible process mergers}\label{app:proof-rigidity}

\begin{proof}
By Theorem~\ref{thm:process-merger-equivalence}, each $F_S$ is an admissible ordinary $e$-merging function. Hence \citet{Wang2025Merging} gives coefficients $w_{i,S}\geq0$ satisfying $\sum_{i\in S}w_{i,S}\leq1$ and
\[
  F_S(x_S)=1+\sum_{i\in S}w_{i,S}(x_i-1).
\]
Fix finite nonempty $B\subseteq S$. Padding the coordinates in $S\setminus B$ by one and using Equation~\eqref{eq:merger-coherence} gives
\begin{equation}\label{eq:weight-difference}
  \sum_{i\in B}\bigl(w_{i,S}-w_{i,B}\bigr)(x_i-1)\geq0
  \qquad\text{for every }x_B\geq0.
\end{equation}
Put $d_i=w_{i,S}-w_{i,B}$. Setting all coordinates except $x_j$ equal to one and letting $x_j\to\infty$ shows that $d_j\geq0$ for every $j\in B$. Setting every coordinate in $B$ equal to zero gives $-\sum_{i\in B}d_i\geq0$. Hence every $d_i=0$.

If two finite sets contain $i$, comparison of each with their union shows that the two coefficients of coordinate $i$ coincide. Thus the weight attached to coordinate $i$ does not depend on the finite set containing it; call it $w_i$. Every finite partial sum is at most one, so $\sum_{i\in\Iuniv}w_i\leq1$. Equation~\eqref{eq:affine-projective} follows, and padding by ones makes every added term vanish, proving exact projectivity. Applying the formula to the singleton set $\{i\}$ at $x=2$ or $x=0$ gives Equation~\eqref{eq:recover-weights}; hence the global sequence is unique and can be read directly from the singleton rules.
\end{proof}

\subsection{Proof of Corollary~\ref{cor:weighted-projective}: The globally weighted projective collection}\label{app:proof-weighted}

\begin{proof}
For each finite $S$, Equation~\eqref{eq:weighted-system} is the admissible pointwise merger in Equation~\eqref{eq:affine-projective}. Apply Proposition~\ref{prop:projective-merger}.
\end{proof}

\subsection{Proof of Proposition~\ref{prop:projective-adjustment}: Persistent projective adjustment}\label{app:proof-adjustment}

\begin{proof}
For every nonempty $S$, the adjuster theorem gives the $e$-process property and time monotonicity. Fix finite $S$ and time $t$, and put $B=S\cap I_t$. For every $u\leq t$, we have $S\cap I_u=B\cap I_u$, so projectivity gives $E_u^S=E_u^B$, including the case $B=\varnothing$ through the bookkeeping extension. The running maxima through time $t$ are therefore equal, and hence $\widetilde E_t^S=\widetilde E_t^B$. Theorem~\ref{thm:forward-persistent} applies to the nonempty intersections.
\end{proof}

\subsection{Proof of Proposition~\ref{prop:shared-control}: Global validity with a shared control}\label{app:proof-shared-control}

\begin{proof}
	Fix a coordinate $i$ and suppose that $\mu_i\leq0$. Conditional on $\F_{t-1}$, the new observations $X_{i,t}$ and $C_t$ are independent of the past, with
	\begin{equation}
		X_{i,t}\sim N(\mu_i,1),
		\qquad
		C_t\sim N(0,1).
	\end{equation}
	Hence
	\begin{equation}
		Z_{i,t}
		=\frac{X_{i,t}-C_t}{\sqrt{2}}
		\sim N\left(\frac{\mu_i}{\sqrt{2}},1\right)
	\end{equation}
	conditionally on $\F_{t-1}$. Therefore,
	\begin{align}
		\E\left[
		\exp\left(\lambda_i Z_{i,t}-\frac{\lambda_i^2}{2}\right)
		\middle|\F_{t-1}
		\right]
		&=
		\exp\left(\frac{\lambda_i\mu_i}{\sqrt{2}}\right)
		\leq 1.
	\end{align}
	
	Although the variables $Z_{i,t}$ are dependent across coordinates because they share the same control observation $C_t$, this dependence is irrelevant for the calculation. To prove that $M_i$ is a test supermartingale, we only need the conditional expectation of the increment of coordinate $i$ given the global past. The global filtration may therefore contain the histories of all other arms without affecting the calculation above.
	
	Since $M_{i,t-1}$ is $\F_{t-1}$-measurable, for $t\geq a_i$,
	\begin{align}
		\E[M_{i,t}\mid\F_{t-1}]
		&=
		M_{i,t-1}
		\E\left[
		\exp\left(\lambda_i Z_{i,t}-\frac{\lambda_i^2}{2}\right)
		\middle|\F_{t-1}
		\right] \\
		&\leq M_{i,t-1}.
	\end{align}
	Before activation, $M_{i,t}=1$, so the supermartingale property is immediate. Thus $M_i$ is a nonnegative test supermartingale under $H_i$ in the global filtration. By optional stopping, it is therefore a global $e$-process.
\end{proof}

\subsection{Proof of Corollary~\ref{cor:stopped-cebh}: Stopped closed eBH}\label{app:proof-stopped-cebh}

\begin{proof}
For every nonempty $S$ and every $P\in H_S$,
\[
  \E_P[E_\tau^{S,\ceBH}]
  =\frac1{|S|}\sum_{i\in S}\E_P[M_{i,\tau}]\leq1
\]
for every global stopping time $\tau$. Hence Equation~\eqref{eq:cebh-intersection} is an intersection $e$-process collection. Since the active family is fixed, Theorem~\ref{thm:forward-stopped} applies directly.
\end{proof}

\subsection{Proof of Corollary~\ref{cor:persistent-cebh}: Persistent closed eBH}\label{app:proof-persistent-cebh}

\begin{proof}
Arithmetic means preserve both the $e$-process property and time monotonicity. Apply Theorem~\ref{thm:forward-persistent}. The pointwise containment of eBH \citep{WangRamdas2022} in closed eBH is the fixed-time result of \citet{XuEtAl2026Closure}, applied to the adjusted coordinate vector at each time.
\end{proof}

\subsection{Proof of Theorem~\ref{thm:converse-stopped}: Canonical stopped-FDR representation}\label{app:proof-converse-stopped}

\begin{proof}
Adaptedness and nonnegativity are immediate. Fix a nonempty finite $S$, a distribution $P\in H_S$, and an almost surely finite stopping time $\tau$. Because $S\subseteq\calN(P)$, configuration monotonicity gives
\[
  E_\tau^{S,\can}
  =\frac1\alpha\loss_\tau^S(\C)
  \leq\frac1\alpha\loss_\tau^{\calN(P)}(\C).
\]
Taking expectations and using stopped FDR proves $\E_P[E_\tau^{S,\can}]\leq1$.

For future-extension coherence, fix $S$ with $B:=S\cap I_t\neq\varnothing$. Every candidate at time $t$ is contained in $I_t$, so
\[
  \loss_t^S(\C)=\loss_t^B(\C).
\]
Thus equality holds in Equation~\eqref{eq:future-coherence}. Finally, if $R\in\C_t$, then for every nonempty $S\subseteq I_t$,
\[
  \FDP_S(R)\leq\loss_t^S(\C)=\alpha E_t^{S,\can},
\]
which proves containment.
\end{proof}

\subsection{Proof of Theorem~\ref{thm:converse-persistent}: Canonical persistent representation}\label{app:proof-converse-persistent}

\begin{proof}
Adaptedness, nonnegativity, and time monotonicity are immediate. If $S\subseteq T$, then configuration monotonicity implies $\overline E_t^{S,\can}\leq\overline E_t^{T,\can}$, so the canonical collection is fully set-monotone and hence future-extension coherent. More precisely, if $B:=S\cap I_t\neq\varnothing$, then every candidate observed by time $t$ lies in an active set contained in $I_t$, so
\[
  \overline E_t^{S,\can}=\overline E_t^{B,\can}.
\]

Fix a nonempty finite $S$, $P\in H_S$, and an almost surely finite stopping time $\tau$. Since $S\subseteq\calN(P)$,
\[
  \overline E_\tau^{S,\can}
  \leq\frac1\alpha\sup_{u\geq0}\loss_u^{\calN(P)}(\C).
\]
The right-hand side has expectation at most one by SupFDR control. Thus $\overline E^{S,\can}$ is an $e$-process for $H_S$. Containment follows because, for $R\in\C_t$ and $S\subseteq I_t$,
\[
  \FDP_S(R)\leq\loss_t^S(\C)\leq\runloss_t^S(\C)
  =\alpha\overline E_t^{S,\can}.
\]
\end{proof}

\subsection{Proof of Theorem~\ref{thm:minimality}: Minimality of the canonical certificates}\label{app:proof-minimality}

\begin{proof}
Fix a finite nonempty $S$ and a time $t$, and put $B=S\cap I_t$. If $B=\varnothing$, then $\FDP_B(R)=0\leq\alpha E_t^S$ for every $R\in\C_t$ by nonnegativity. If $B\neq\varnothing$, closure containment and future-extension coherence give
\[
  \FDP_B(R)
  \leq\alpha E_t^B
  \leq\alpha E_t^S.
\]
Thus in either case $\FDP_{S\cap I_t}(R)\leq\alpha E_t^S$. Maximizing over $R\in\C_t$ and dividing by $\alpha$ proves Equation~\eqref{eq:min-current}.

For the running statement, apply the current bound at every $u\leq t$:
\[
  \frac1\alpha\loss_u^S(\C)\leq E_u^S\leq E_t^S,
\]
where the last inequality uses time monotonicity. Maximizing over $u\leq t$ proves Equation~\eqref{eq:min-running}.
\end{proof}

\section{Auxiliary results and proofs}\label{app:auxiliary}

\subsection{A finite canonical-hull example}\label{sup:finite-example}

This example illustrates the order-theoretic hull discussed in the main text. Consider one time point with index set $I=\{1,2,3\}$ and candidate family
\begin{equation}\label{sup:eq:finite-family}
  \C=\bigl\{\{1,2\},\{1,3\},\{2,3\}\bigr\}.
\end{equation}
For a possible true-null set $S$, write
\[
  \loss^S(\C):=\max_{R\in\C}\FDP_S(R).
\]
The maximal loss depends only on $|S|$:
\begin{center}
\begin{tabular}{@{}cccl@{}}
\toprule
$|S|$ & representative & $\loss^S(\C)$ & explanation \\
\midrule
$0$ & $\varnothing$ & $0$ & no false discoveries \\
$1$ & $\{1\}$ & $1/2$ & choose a certified pair containing $1$ \\
$2$ & $\{1,2\}$ & $1$ & the pair $\{1,2\}$ is certified \\
$3$ & $I$ & $1$ & every certified pair is entirely null \\
\bottomrule
\end{tabular}
\end{center}

The canonical hull always contains the empty set, and in this example it also adds the full set $I$. The substantive addition is $I$: for $R=I$, the FDP is $1/3$ under a singleton configuration, $2/3$ under a two-element configuration, and one under $S=I$. Each value is no larger than the corresponding loss already authorized by the original family. No singleton is added: for example, $R=\{1\}$ has FDP one under $S=\{1\}$, exceeding the available bound $1/2$. Therefore
\begin{equation}\label{sup:eq:finite-hull}
  \Hh(\C)
  =\bigl\{\varnothing,\{1,2\},\{1,3\},\{2,3\},\{1,2,3\}\bigr\}.
\end{equation}
Every set added by the hull has no larger loss under any configuration, so the maximal loss profile is unchanged. Applying the hull a second time therefore adds nothing.

\subsection{Extension to bounded monotone local losses}\label{sup:losses}

The preceding proofs extend directly to the bounded local losses introduced in Section~\ref{sec:general-losses}. Let $f_S(R)\in[0,1]$ be a loss indexed by a possible true-null configuration $S$. Assume configuration monotonicity,
\begin{equation}\label{sup:eq:general-monotone}
  S\subseteq T\quad\Longrightarrow\quad f_S(R)\leq f_T(R),
\end{equation}
and action locality,
\begin{equation}\label{sup:eq:general-locality}
  f_S(R)=f_{S\cap R}(R).
\end{equation}
Define current and running loss profiles by replacing FDP with $f$, and replace the dynamic closure constraint by $f_S(R)\leq\alpha E_t^S$.

\begin{theorem}[Dynamic closure for bounded monotone local losses]\label{sup:thm:general-loss}
Under Equations~\eqref{sup:eq:general-monotone} and~\eqref{sup:eq:general-locality}, the forward stopped-loss and persistent supremum-loss principles, the canonical converse theorems, pointwise minimality, and the closure-operator results remain valid with FDP replaced by $f$.
\end{theorem}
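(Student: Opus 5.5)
The plan is to rerun each FDP proof with $f$ in place of $\FDP$ and record which property of the loss each step uses. Only two properties of FDP are ever invoked. The first is configuration monotonicity, Equation~\eqref{eq:fdp-monotone}. The second is the active-set identity $\FDP_S(R)=\FDP_{S\cap I_t}(R)$ for $R\subseteq I_t$, Equation~\eqref{eq:fdp-locality}. For $f$, the first is exactly \eqref{sup:eq:general-monotone}. The second follows from action locality: if $R\subseteq I_t$, then $S\cap R=(S\cap I_t)\cap R$, so $f_S(R)=f_{S\cap R}(R)=f_{(S\cap I_t)\cap R}(R)=f_{S\cap I_t}(R)$. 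I would also record $f_\varnothing(R)\geq 0$. When $S\cap R=\varnothing$, locality alone does not give $f_S(R)=0$, so I would note whether a normalization $f_\varnothing\equiv0$ is needed. It is harmless for FWER and weighted FDP.

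\textbf{Forward theorems.} Lemma~\ref{lem:random-intersection} involves only the certificates, not the loss, so it carries over verbatim. In the proof of Theorem~\ref{thm:forward-stopped}, for $R\in\Rfam_{\alpha,\tau}$ the identity above gives $f_{\calN(P)}(R)=f_{\calN_\tau(P)}(R)\leq\alpha Z_\tau(P)$ whenever $\calN_\tau(P)\neq\varnothing$. When $\calN_\tau(P)=\varnothing$, the bound reads $f_\varnothing(R)\leq 0$. This is where the normalization $f_\varnothing=0$ is used, the analogue of the FDP convention. The bound is then fed into the lemma. The SupFDR part of Theorem~\ref{thm:forward-persistent} is identical, taking a supremum over $t$. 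For persistence, take $R$ feasible at time $t$, a later time $u\geq t$, and $S\subseteq I_u$. The chain $f_S(R)=f_{S\cap I_t}(R)\leq\alpha E_t^{S\cap I_t}\leq\alpha E_t^S\leq\alpha E_u^S$ uses only the identity, coherence and time monotonicity. The case $S\cap I_t=\varnothing$ is again handled by the normalization.

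\textbf{Converse and minimality.} In Theorems~\ref{thm:converse-stopped} and~\ref{thm:converse-persistent}, define the canonical certificates by $\alpha^{-1}\max_{R\in\C_t} f_{S\cap I_t}(R)$ and by its running maximum. The $e$-process bound uses monotonicity in $S$: if $S\subseteq\calN(P)$, the canonical loss is at most the true-null loss. Future-extension coherence holds with equality because $(S\cap I_t)\cap I_t=S\cap I_t$. Containment holds by definition, and the characterization theorems follow. Theorem~\ref{thm:minimality} uses only closure containment, coherence and time monotonicity, so it transfers unchanged once $f_{S\cap I_t}$ replaces $\FDP_{S\cap I_t}$.

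\textbf{Hulls.} The closure-operator properties of $\Hh$ and $\Hhsup$ are order-theoretic. Extensivity holds because $f_S(R)\leq L^S_t$ for every $R\in\C_t$. Monotonicity holds because enlarging $\C$ enlarges the profile. Idempotence holds because every added set has loss no larger than the profile, so the maximum loss profile of $\Hh(\C)$ equals that of $\C$. None of these steps use boundedness beyond measurability and finiteness of maxima. The only point I expect to need care is the empty-configuration case, i.e. making explicit that $f_\varnothing\equiv 0$ is part of the hypotheses or implied by the intended examples. Everything else is a substitution.
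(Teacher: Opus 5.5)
Your proposal is correct and follows the paper's own argument: derive $f_S(R)=f_{S\cap I_t}(R)$ for $R\subseteq I_t$ from action locality, use configuration monotonicity with $S\subseteq\calN(P)$ in the converse bounds, and observe that the hull arguments only compare loss profiles. Your flag on $f_\varnothing\equiv0$ is a genuine point that the paper's proof passes over, since the stated hypotheses allow $f_S(R)\equiv1$, for which the forward bound can fail under a distribution with no true nulls; note also that this normalization is used in the $B=\varnothing$ case of the proof of Theorem~\ref{thm:minimality}, which you said transfers unchanged.
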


\begin{proof}
If $R\subseteq I_t$, locality gives
\[
  f_S(R)=f_{S\cap R}(R)=f_{(S\cap I_t)\cap R}(R)=f_{S\cap I_t}(R).
\]
This is the only FDP-specific locality identity used in the forward proofs. In the converse and minimality proofs, $S\subseteq\mathcal N(P)$ and configuration monotonicity replace the corresponding monotonicity property of FDP. Since $f_S(R)\in[0,1]$, all current and running loss profiles are finite. The closure-operator arguments depend only on comparisons between these loss profiles. Thus the proofs above carry over with FDP replaced by $f$.
\end{proof}

Examples include familywise-error loss $f_S(R)=\one\{|S\cap R|>0\}$ and weighted FDP losses with nonnegative weights.

\subsection{Proof of the closure-operator theorem}\label{sup:closure-proof}

We give the details for the order-theoretic hull induced by the canonical certificates and summarized in the main text. For an arbitrary candidate process $\C$, let $\Hh(\C)$ be the current-loss hull and $\Hhsup(\C)$ the running-loss hull, and order candidate processes pointwise by inclusion.

Extensivity is immediate. If $R\in\C_t$, then for every configuration $S$ its loss is bounded by the maximum used to define the corresponding current or running profile, so $R$ belongs to both hulls.

For monotonicity, suppose $\C_t\subseteq\D_t$ for every $t$. Then $\loss_t^S(\C)\leq\loss_t^S(\D)$ for every $S,t$, and the same inequality holds for the running profiles. Every constraint defining the hull of $\D$ is therefore at least as permissive as the corresponding constraint for $\C$.

For current-hull idempotence, put $\D=\Hh(\C)$. By definition, every $R\in\D_t$ satisfies
\[
  \FDP_S(R)\leq\loss_t^S(\C)
\]
for every $S\subseteq I_t$. Hence $\loss_t^S(\D)\leq\loss_t^S(\C)$. Extensivity gives the reverse inequality because $\C_t\subseteq\D_t$. The current profiles therefore agree, and applying the current hull again changes nothing.

For the persistent hull, put $\D=\Hhsup(\C)$. At every time $u$,
\[
  \loss_u^S(\D)\leq\runloss_u^S(\C).
\]
Consequently,
\[
  \runloss_t^S(\D)
  =\max_{u\leq t}\loss_u^S(\D)
  \leq\max_{u\leq t}\runloss_u^S(\C)
  =\runloss_t^S(\C).
\]
Extensivity again gives the reverse inequality. Thus the running profiles also agree after one application, proving idempotence.

\subsection{Persistent closed BY in the batch any-time setting}\label{sup:closed-by}

Suppose the hypothesis family is fixed, $I_t=[m]$, and $P_i=(P_{i,t})_{t\geq0}$ is a nonincreasing global $p$-process that is valid under optional stopping for $H_i$. For $k\geq1$, let $h_k=\sum_{j=1}^k j^{-1}$ and define
\[
  q_{k,\alpha}(p)
  :=\frac{k\,\one\{h_kp\leq\alpha\}}
  {\alpha\bigl(\lceil kh_kp/\alpha\rceil\vee1\bigr)}.
\]
For every nonempty $S\subseteq[m]$, put
\[
  E_t^{S,\mathrm{cBY}}
  :=\frac1{|S|}\sum_{i\in S}q_{|S|,\alpha}(P_{i,t}).
\]
For fixed $k$, the map $q_{k,\alpha}$ is a p-to-e calibrator used in the closed-BY construction of \citet{XuEtAl2026Closure}, which starts from the arbitrary-dependence BY correction of \citet{BenjaminiYekutieli2001}. Hence, under $P\in H_S$ and at every global stopping time $\tau$, each stopped term has expectation at most one and their average is an $e$-value. Because $P_i$ is nonincreasing and $q_{k,\alpha}$ is nonincreasing, the intersection processes are time-monotone. The persistent dynamic $e$-closure principle therefore gives simultaneous SupFDR control and setwise persistence. At every fixed time, the candidate family contains the ordinary BY rejection set by the fixed-time closed-BY result.

This is a batch any-time construction. The intersection-size-dependent arithmetic average changes under neutral padding, so the family size must be fixed.

\subsection{Persistent certificates and betting wealth}\label{sup:betting}

An $e$-process need not itself be a test supermartingale. Both provide any-time-valid evidence, but a test supermartingale has the stronger one-step property
\[
\E[M_t\mid\F_{t-1}]\leq M_{t-1},
\]
which supports a direct interpretation of $M_t$ as betting wealth under the null. The persistent certificates used in this paper need not have this property.

\begin{fact}[A nondecreasing supermartingale is constant]\label{sup:prop:constant}
	Let $(M_t)$ be an integrable supermartingale. If $M_t\geq M_{t-1}$ almost surely for every $t$, then $M_t=M_{t-1}$ almost surely for every $t$.
\end{fact}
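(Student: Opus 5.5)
The plan is to telescope: show that each increment $D_t:=M_t-M_{t-1}$ is almost surely zero. By hypothesis $D_t\geq0$ almost surely. Because $(M_t)$ is an integrable supermartingale, $M_{t-1}$ is $\F_{t-1}$-measurable and integrable, so
\[
  \E[D_t\mid\F_{t-1}]=\E[M_t\mid\F_{t-1}]-M_{t-1}\leq0
\]
almost surely. Taking unconditional expectations gives $\E[D_t]\leq0$. Together with $D_t\geq0$, this forces $\E[D_t]=0$.

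A nonnegative random variable with zero expectation vanishes almost surely. Hence $D_t=0$ almost surely, that is, $M_t=M_{t-1}$ almost surely for every $t$. Intersecting the countably many null events over $t\in\N$ then gives $M_t=M_0$ for all $t$ simultaneously, almost surely.

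There is no real obstacle. The only care needed is integrability, which is assumed and makes the difference of conditional expectations well defined. One could also avoid conditioning entirely: the supermartingale property implies $\E[M_t]\leq\E[M_{t-1}]$, while pathwise monotonicity implies $M_t-M_{t-1}\geq0$, and these together give the same conclusion.

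As a remark for the surrounding discussion, the fact implies that a pathwise nondecreasing test supermartingale with $M_0\leq1$ is almost surely equal to its initial value. This is why the nontrivial time-monotone certificates used earlier (adjusted running maxima, canonical running losses) are $e$-processes but generally not supermartingales.
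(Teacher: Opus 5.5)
Your proof is correct and follows the paper's argument: the supermartingale inequality bounds the conditional mean of the nonnegative increment $M_t-M_{t-1}$ by zero, which forces the increment to vanish. The only difference is that you state explicitly, by taking unconditional expectations, the step from a zero conditional mean to almost-sure vanishing, which the paper leaves implicit.
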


\begin{proof}
	The supermartingale property gives
	\[
	\E[M_t-M_{t-1}\mid\F_{t-1}]\leq0.
	\]
	The increment is nonnegative, so its conditional expectation is also nonnegative and must equal zero. Hence the increment itself is zero almost surely.
\end{proof}

Thus a nonconstant time-monotone $e$-process cannot itself be a test supermartingale. While this does not prevent it from serving as any-time-valid evidence; it makes it weird to interpret it as the wealth process of a sequential betting strategy.

\subsection{Time-monotone \texorpdfstring{$e$}{e}-processes need not be adjusted running maxima}\label{sup:nonadjuster}

Running-maximum calibration originates in the evidence-insurance literature \citep{DawidEtAl2011Evidence,ShaferEtAl2011TestMartingales} and is used for persistent multiple testing by \citet{TavyrikovGoemanDeHeide2026}. Proposition~\ref{prop:monotone-eprocess-characterization} characterizes the larger class directly: a nonnegative adapted pathwise nondecreasing process is an $e$-process exactly when its terminal supremum has expectation at most one under the null. If one additionally imposed $E_0=1$, monotonicity would give $E_t\geq1$ almost surely and deterministic-time validity would force $E_t=1$ almost surely. Nontrivial time-monotone certificates therefore require the subnormalized convention used here.

\begin{proposition}[Adjusted running maxima are not exhaustive]\label{sup:prop:nonadjuster}
There exist nonconstant pathwise nondecreasing $e$-processes that cannot be written in the form
\[
  V_t=A\!\left(1\vee\sup_{0\leq u\leq t}E_u\right)
\]
for any adjuster $A$ and any normalized $e$-process $E$ with $E_0=1$.
\end{proposition}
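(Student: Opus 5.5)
The plan is to exploit the one-step structure of the filtration. An adjusted running maximum must pass through a single step $E_0=1\to E_1$, and such a step is constrained by Markov's inequality for $E_1$ itself. This is strictly stronger than the Ville-type ladder constraint that the adjuster condition~\eqref{eq:adjuster-condition} is calibrated against. A process $V$ that jumps to \emph{two} different positive levels in one step can therefore have $\E[V_\infty]=1$, and so be a valid $e$-process by Proposition~\ref{prop:monotone-eprocess-characterization}, without being representable.

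\textbf{Construction.} Take a single distribution and a random variable $X$ with $P(X=0)=1/2$ and $P(X=1)=P(X=3)=1/4$. Let $\F_0$ be trivial and $\F_t=\sigma(X)$ for $t\geq1$. Set $V_0=0$ and $V_t=X$ for $t\geq1$. Then $V$ is adapted, nonnegative, nondecreasing and nonconstant. Moreover $\E[V_\infty]=\tfrac14+\tfrac34=1$, so Proposition~\ref{prop:monotone-eprocess-characterization} makes $V$ an $e$-process.

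\textbf{Reduction for a putative representation.} Suppose $V_t=A(1\vee\sup_{u\leq t}E_u)$ with $E$ an $e$-process in this filtration, $E_0=1$, and $A$ an adjuster. Write $S:=1\vee E_0\vee E_1=1\vee E_1$, which is $\sigma(X)$-measurable, so it takes values $S(0),S(1),S(3)$. We have $A(1)=V_0=0$, $A(S(1))=1$ and $A(S(3))=3$. Put $s_1:=S(1)$ and $s_2:=S(3)$.
\begin{itemize}
\item Since $A(1)=0<1$, we get $s_1>1$, hence $E_1=s_1$ on $\{X=1\}$.
\item Monotonicity of $A$ forces $s_1<s_2$, hence also $E_1=s_2$ on $\{X=3\}$.
\end{itemize}
Validity of $E$ at the deterministic time $1$ then gives the one-step Markov constraint
\[
  \tfrac14 s_1+\tfrac14 s_2\leq \E[E_1]\leq1,
  \qquad\text{i.e.}\qquad s_1+s_2\leq4 .
\]

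\textbf{Contradiction via the adjuster condition.} Because $A$ is nondecreasing, we have $A\geq1$ on $[s_1,s_2)$ and $A\geq3$ on $[s_2,\infty)$. Therefore
\[
  \int_1^\infty\frac{A(x)}{x^2}\,dx\geq\Bigl(\frac1{s_1}-\frac1{s_2}\Bigr)+\frac3{s_2}=\frac1{s_1}+\frac2{s_2}.
\]
Minimizing $1/s_1+2/s_2$ over $1\le s_1\le s_2$ with $s_1+s_2\le4$, the minimum lies on $s_1+s_2=4$. By Lagrange multipliers (or Cauchy--Schwarz) it equals $(1+\sqrt2)^2/4>1$, attained at $s_2/s_1=\sqrt2$. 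This contradicts~\eqref{eq:adjuster-condition}, so no representation exists.

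\textbf{Main obstacle.} The main care is in the reduction step. One must argue cleanly that $S$ is genuinely determined at time $1$, so that no later increases of $E$ can help; this holds because $V_1$ already uses only $u\le1$. One must also handle possible ties or non-strictness of $A$ by working with lower bounds from monotonicity rather than exact values of $A$. I would also remark why two values do not suffice: for a two-level jump, Markov and Ville coincide, and the step adjuster realizes it. This explains the three-valued choice of $X$.
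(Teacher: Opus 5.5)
Your proof is correct, but it takes a different and heavier route than the paper. The paper uses the deterministic process $V_0=0$, $V_t=1/2$ for $t\geq1$. A representation forces $A(1)=V_0=0$. Since $V_1=1/2\neq0$ on every path, this gives $1\vee E_1>1$, so $E_1>1$ almost surely, which contradicts $\E[E_1]\leq1$. That argument uses neither the integral condition~\eqref{eq:adjuster-condition} nor the monotonicity of $A$. It also does not use any measurability of $E_1$ with respect to the given filtration.

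Your example escapes this cheap obstruction because $V_1$ vanishes with probability $1/2$. You therefore have to play the one-step Markov constraint $s_1+s_2\leq4$ against the adjuster budget, which is calibrated to Ville's bound for the running maximum. The Cauchy--Schwarz estimate $1/s_1+2/s_2\geq(1+\sqrt2)^2/4>1$ then closes the argument. What this buys is a stronger, less degenerate statement: a genuinely random time-monotone $e$-process that exhausts its budget, $\E[V_\infty]=1$, is still not an adjusted running maximum. The reason is structural, namely several positive levels reached in a single step, rather than a time-zero boundary effect.

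Two minor points. First, your reliance on $\sigma(X)$-measurability of $S$ can be dropped. Work with the thresholds $a=\inf\{x:A(x)\geq1\}$ and $b=\inf\{x:A(x)\geq3\}$. These give $\E[E_1]\geq(a+b)/4$ and $\int_1^\infty A(x)x^{-2}\,dx\geq1/a+2/b$, with the same conclusion. Second, your closing remark that three values are needed holds only among processes with $\Pr(V_1=0)>0$. By the paper's argument, any $V$ with $V_0=0$ and $V_1>0$ almost surely is already non-representable, for example a deterministic jump or a jump to two positive levels.
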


\begin{proof}
Let $V_0=0$ and $V_t=1/2$ for every $t\geq1$. This deterministic process is adapted, pathwise nondecreasing, and satisfies $\E[V_\tau]\leq1/2\leq1$ for every stopping time $\tau$, so it is an $e$-process.

Suppose that the displayed representation held. At time zero, normalization of $E$ gives
\[
  0=V_0=A(1).
\]
At time one, $V_1=1/2$ almost surely. Since $A$ is nondecreasing and $A(1)=0$, this forces
\[
  1\vee\max(E_0,E_1)>1
  \qquad\text{almost surely}.
\]
Because $E_0=1$, we obtain $E_1>1$ almost surely, and hence $\E[E_1]>1$. This contradicts the $e$-process property at the deterministic stopping time one.
\end{proof}

Hence adjusted running maxima do not exhaust the class of time-monotone $e$-processes.

\end{document}